\newtheorem{thm}{Theorem}[section]
\newtheorem{cor}[thm]{Corollary}
\newtheorem{lem}[thm]{Lemma}
\newtheorem{prop}[thm]{Proposition}
\theoremstyle{definition}
\newtheorem{defn}[thm]{Definition}
\theoremstyle{remark}
\numberwithin{equation}{section}
\begin{document}

\title[The predual of space of decomposable maps]{The predual of the space of decomposable maps from a $C^*$-algebra into a von Neumann algebra}

\author{Kyung Hoon Han}

\address{Department of Mathematics, The University of Suwon, Gyeonggi-do 445-743, Korea}

\email{kyunghoon.han@gmail.com}

\subjclass[2000]{46L06, 46L07, 47L07, 47L25}

\keywords{decomposable map, matrix regular operator space, tensor product}

\thanks{This work was supported by the National Research Foundation of Korea Grant funded by the Korean Government (2012R1A1A1012190)}

\date{}

\dedicatory{}

\commby{}


\begin{abstract}
For a $C^*$-algebra $\mathcal A$ and a von Neumann algebra $\mathcal R$, we describe the predual of space $D(\mathcal A,\mathcal R)$ of decomposable maps from $\mathcal A$ into $\mathcal R$ equipped with decomposable norm. This predual is found to be the matrix regular operator space structure on $\mathcal A \otimes \mathcal R_*$ with a certain universal property. Its matrix norms are the largest and its positive cones on each matrix level are the smallest among all possible matrix regular operator space structures on $\mathcal A \otimes \mathcal R_*$ under the two natural restrictions: (1) $\|x \otimes y\| \le \|x\| \|y\|$ for $x\in M_k(\mathcal A), y \in M_l(\mathcal R_*)$ and (2) $v \otimes w$ is positive if $v \in M_k(\mathcal A)^+$ and $w \in M_l(\mathcal R_*)^+$.
\end{abstract}

\maketitle

\section{Introduction}
In \cite{H}, Haagerup introduced the concept of decomposable maps between $C^*$-algebras and their decomposable norms in order to prove the converse of the Wittstock decomposition theorem. According to the Wittstock decomposition theorem, every completely bounded map of $C^*$-algebra $T : \mathcal A \to \mathcal B$ is a linear combination of completely positive maps if range $\mathcal B$ is an injective $C^*$-algebra. Conversely, Haagerup proved that if range $\mathcal B$ is a von Neumann algebra, and for all $C^*$-algebras $\mathcal A$, if every completely bounded map $T : \mathcal A \to \mathcal B$ is a linear combination of completely positive maps, then the von Neumann algebra $\mathcal B$ is necessarily injective. Motivated by Paulsen's simple proof on the Wittstock decomposition theorem \cite{Pa1}, decomposable norms have been introduced, which are different from completely bounded norms and well fitted to decomposition into completely positive maps. The Banach space of decomposable maps from $\mathcal A$ into $\mathcal B$ equipped with decomposable norm is denoted by $D(\mathcal A, \mathcal B)$.

Let $\mathcal A$ be a $C^*$-algebra and $\mathcal R$ a von Neumann algebra. Recall that the dual of the projective tensor product $\mathcal A \otimes_\gamma \mathcal R_*$ is the space $B(\mathcal A, \mathcal R)$ of bounded linear maps from $\mathcal A$ into $\mathcal R$ in a canonical manner, and that the dual of the operator space projective tensor product $\mathcal A \otimes_\wedge \mathcal R_*$ is the space $CB(\mathcal A, \mathcal R)$ of completely bounded linear maps. It is natural to ask what is the predual of the space $D(\mathcal A, \mathcal R)$ of decomposable maps from $\mathcal A$ into $\mathcal R$. The operator space predual of $D(\mathcal A, \mathcal R)$ had been already found by Le Merdy and Magajna \cite[Theorem 6.1]{LMM}. In this paper, we give the explicit and intrinsic description of the matrix regular operator space predual of $D(\mathcal A, \mathcal R)$. This predual is found to be the matrix regular operator space structure on $\mathcal A \otimes \mathcal R_*$ with a certain universal property. Its matrix norms are the largest and its positive cones on each matrix level are the smallest among all possible matrix regular operator space structures on $\mathcal A \otimes \mathcal R_*$ under the two natural restrictions: (1) $\|v \otimes w\| \le \|v\| \|w\|$ for $v \in M_k(\mathcal A), w \in M_l(\mathcal R_*)$ and (2) $v \otimes w$ is positive if $v \in M_k(\mathcal A)^+$ and $w \in M_l(\mathcal R_*)^+$.

We also consider the opposite situation, the matrix regular operator space structure on $\mathcal A^* \otimes \mathcal B$ whose matrix norms are the smallest and the positive cones at each matrix level are the largest among all possible matrix regular operator space structures on $\mathcal A^* \otimes \mathcal B$ under the natural restriction: $\varphi \otimes \psi : \mathcal A^* \otimes \mathcal B \to M_{kl}$ is completely positive and completely contractive (c.c.p.) for all c.c.p. maps $\varphi : \mathcal A^* \to M_k$ and $\psi : \mathcal B \to M_l$. We will show that the canonical embedding $\mathcal A^* \otimes \mathcal B \hookrightarrow D(\mathcal A, \mathcal B)$ is isometric and order isomorphic with respect to such a matrix regular operator space structure on $\mathcal A^* \otimes \mathcal B$. Therefore, its norm is consistent with Pisier's delta norm \cite{JLM,Pi}.

The methods of defining two extremal tensor products of matrix regular operator spaces can be regarded as a combination of those for operator spaces \cite{BP, ER1} and those for operator systems \cite{KPTT}.

\section{Preliminaries}

A linear mapping of $C^*$-algebras is called decomposable if it is a linear combination of completely positive maps from $\mathcal A$ into $\mathcal B$. A linear map $T : \mathcal A \to \mathcal B$ is decomposable if and only if there exist two completely positive maps $S_1, S_2 : \mathcal A \to \mathcal B$ for which the linear map $$\Phi : \mathcal A \to M_2(\mathcal B) : a \mapsto \begin{pmatrix} S_1(a) & T(a) \\ T^*(a) & S_2(a) \end{pmatrix}$$ is completely positive. The adjoint $T^* : \mathcal A \to \mathcal B$ is defined as $T^*(a)=T(a^*)^*$. Given a decomposable map $T : \mathcal A \to \mathcal B$, Haagerup defined its decomposable norm $\|T\|_{dec}$ by $$\|T\|_{dec} = \inf \{ \max \{ \|S_1\|, \|S_2\|\} : \begin{pmatrix} S_1 & T \\ T^* & S_2 \end{pmatrix} \ge_{cp} 0 \}.$$ The Banach space of decomposable maps from $\mathcal A$ into $\mathcal B$ equipped with decomposable norms $\|\cdot\|_{dec}$ is denoted by $D(\mathcal A, \mathcal B)$.

Let $E$ be an operator space and $\mathcal B$ a $C^*$-algebra. Pisier defined the delta norm on $E \otimes \mathcal B$ by $$\delta(z) = \sup \{ \|(\sigma \cdot \pi)(z)\|\}, \qquad z \in E \otimes B,$$ where the supremum is taken over all complete contractions $\sigma : E \to B(H)$ and $*$-representations $\pi : \mathcal B \to B(H)$ with commuting ranges. Based on Pisier's result \cite[Chapter 12]{Pi}, Junge and Le Merdy proved that the canonical embedding $\mathcal A^* \otimes \mathcal B \hookrightarrow D(\mathcal A, \mathcal B)$ is isometric with respect to the delta norm \cite{JLM}.

A complex vector space $V$ is matrix ordered if
\begin{enumerate}
\item $V$ is a $*$-vector space (hence so is $M_n(V)$ for all $n
\ge 1$), \item each $M_n(V), n \ge 1$, is partially ordered by a
(not necessarily proper) cone $M_n(V)^+ \subset  M_n(V)_{sa}$ ,
and \item if $\alpha \in M_{m,n}$, then $\alpha^* M_m(V)^+ \alpha
\subset M_n(V)^+$.
\end{enumerate}

An operator space $V$ is called a matrix ordered operator space
iff $V$ is a matrix ordered vector space and for every $n \in
\mathbb N$,
\begin{enumerate}
\item the $*$-operation is an isometry on $M_n(V)$, and \item the
cones $M_n(V)^+$ are closed.
\end{enumerate}

For a matrix ordered operator space $V$ and its dual space $V^*$,
the positive cone on $M_n(V^*)$ for each $n \in \mathbb N$ is
defined by
$$M_n(V^*)^+=CB(V, M_n) \cap CP(V, M_n).$$ Then the operator space dual
$V^*$ with this positive cone is a matrix ordered operator space
\cite[Corollary 3.2]{S}.

For a matrix ordered operator space $V$, we say
that $V$ is a matrix regular operator space if for each $n \in
\mathbb N$ and for all $v \in M_n(V)_{sa}$
\begin{enumerate}
\item $u \in M_n(V)^+$ and $-u \le v \le u$ imply that $\|v\|_n
\le \|u\|_n$, and \item $\|v\|_n<1$ implies that there exists $u
\in M_n(V)^+$ such that $\|u\|_n<1$ and $-u \le v \le u$.
\end{enumerate}
Due to condition (1), it is easily seen that the positive cone of
a matrix regular operator space is always proper. In \cite{S}, the norms of matrix regular operator spaces are assumed to be complete. However, for simplicity, we do not make this assumption because we do not use \cite[Theorem 4.10]{S}, wherein this completeness is indispensable.

A matrix regular operator space can be described alternatively. A matrix ordered operator space $V$ is matrix regular if and only if the following condition holds: for all $x \in M_n(V), \|x\|_n<1$ if and only if there exist $a, d \in M_n(V)^+, \|a\|_n<1$, and $\|d\|_n<1$, such that $\begin{pmatrix} a & x \\ x^* & d \end{pmatrix} \in M_{2n}(V)^+$ \cite[Theorem 3.4]{S}.

The dual spaces of matrix regular operator spaces are matrix regular as well \cite[Corollary 4.7]{S}. The class of matrix regular operator spaces contains $C^*$-algebras and their duals, preduals of von Neumann algebras, and the Schatten class $\mathcal S_p$.

\section{Tensor products of matrix regular operator spaces}

In this section, we construct two extremal tensor products of matrix regular operator spaces. The methods to do so can be considered as a combination of those for operator spaces \cite{BP,ER1} and those for operator systems \cite{KPTT}. The basic observation lies in the fact that the matrix norms of matrix regular operator spaces are determined by the positive cones at each matrix level and the matrix norms on them, $$\|x\|_{M_n(V)} = \inf \{ \max \{ \|a\|_{M_n(V)}, \|d\|_{M_n(V)} \} : \begin{pmatrix} a & x \\ x^* & d \end{pmatrix} \in M_{2n}(V)^+\}.$$ Henceforth, we first describe the positive cones at each matrix level, and then assign them scalar values. Next, we extend the assignment on the entire space using the above formula. Finally, we prove that they are actually matrix regular operator spaces.

In often, we abbreviate completely positive maps and completely positive completely contractive maps as c.p. maps and c.c.p. maps, respectively.

\begin{defn}
Suppose that $V$ and $W$ are matrix regular operator spaces. \begin{enumerate} \item We let $M_n(V \otimes_{\delta} W)_+ := \{ z \in M_n (V \otimes W) : [(\varphi \otimes \psi) (z_{ij})] \in M_{nkl}^+$ for all continuous c.p. maps $\varphi : V \to M_k, \psi : W \to M_l, k,l \in \mathbb N \}$. \item For an element $z$ in $M_n(V \otimes_{\delta} W)_+$, let $$|z|_{\delta} := \sup \{ \| [\varphi \otimes \psi (z_{ij})]\| : \text{c.c.p. maps}\ \varphi : V \to M_k, \psi : W \to M_l, \quad k,l \in {\mathbb N} \}.$$ \item For a general element $z$ in $M_n(V \otimes W)$, let $$\|z\|_{\delta} := \inf \{ \max \{ |u|_{\delta}, |u'|_{\delta} \} : \begin{pmatrix} u & z \\ z^* & u' \end{pmatrix} \in M_{2n}(V \otimes_{\delta} W)_+ \}.$$ \end{enumerate}
\end{defn}

Actually, we will show that $|z|_{\delta}=\|z\|_\delta = \|z\|_\vee$ for $z \in M_n(V \otimes_\delta W)_+$ in Lemma \ref{positive}. The notation $\delta$ was chosen owing to its consistency with Pisier's delta norm, which will be proved in Corollary \ref{pisier}.

\begin{defn}
Suppose that $V$ and $W$ are matrix regular operator spaces. \begin{enumerate} \item We let $$M_n(V \otimes_{\Delta} W)_+ := \{ \alpha (v \otimes w) \alpha^* \in M_n (V \otimes W) : v \in M_k(V)^+, w \in M_l(W)^+, \alpha \in M_{n,kl}, k,l \in {\mathbb N} \}.$$ \item For an element $z$ in $M_n(V \otimes_{\Delta} W)_+$, let $$|z|_{\Delta} := \inf \{ \|\alpha\|^2 \|v\| \|w\| : z=\alpha (v \otimes w) \alpha^*, \alpha \in M_{n,kl}, v \in M_k(V)^+, w \in M_l (W)^+ \}.$$ \item For a general element $z$ in $M_n(V \otimes W)$, let $$\|z\|_{\Delta} := \inf \{ \max \{ |u|_{\Delta}, |u'|_{\Delta} \} : \begin{pmatrix} u & z \\ z^* & u' \end{pmatrix} \in M_{2n}(V \otimes_{\Delta} W)_+ \}.$$ \end{enumerate}
\end{defn}

When proving Theorem \ref{Delta}, we will see that $\|z\|_\Delta \le |z|_\Delta$ for $z \in M_n(V \otimes_\Delta W)_+$. The immediate question whether the equality holds is not answered at the time of this writing. In the above definitions, the set over which an infimum is taken is nonempty according to Propositions \ref{nonempty} and \ref{basic}(1).

\begin{prop}\label{nonempty}
Suppose that $V$ and $W$ are matrix regular operator spaces and $z$ is an element in $M_n(V \otimes W)$. There exist elements $u_1, u_2$ in $M_n(V \otimes_{\Delta} W)_+$ such that $\begin{pmatrix} u_1 & z \\ z^* & u_2 \end{pmatrix} \in M_{2n}(V \otimes_{\Delta} W)_+$.
\end{prop}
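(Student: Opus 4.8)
The plan is to produce the required $2n\times 2n$ positive element as a \emph{single} congruence $\alpha(v\otimes w)\alpha^*$ of \emph{one} $\Delta$-positive elementary tensor $v\otimes w$. The payoff of this formulation is that the two diagonal blocks then lie in $M_n(V\otimes_{\Delta}W)_+$ for free: each is itself a congruence of the same $v\otimes w$, by the top, respectively bottom, half of $\alpha$. In particular no additivity property of the cone is needed.

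First I would fix notation. Using $M_n(V\otimes W)\cong M_n\otimes V\otimes W$, write $z=\sum_{s=1}^m e_s\otimes x_s\otimes y_s$ with $e_s\in M_n$, $x_s\in V$, $y_s\in W$. Since $V$ and $W$ are matrix regular, the characterization recalled in Section 2 (\cite[Theorem 3.4]{S}), applied to $x_s/t$ and $y_s/t$ for $t>\max\{\|x_s\|,\|y_s\|\}$ and rescaled by $t$, yields $a_s,d_s\in V^+$ and $b_s,c_s\in W^+$ with
$$M_s:=\begin{pmatrix} a_s & x_s\\ x_s^* & d_s\end{pmatrix}\in M_2(V)^+,\qquad N_s:=\begin{pmatrix} b_s & y_s\\ y_s^* & c_s\end{pmatrix}\in M_2(W)^+.$$

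Next I would assemble the single positive tensor and the congruence. Set $v:=(I_n\otimes M_1)\oplus\cdots\oplus(I_n\otimes M_m)$ and $w:=N_1\oplus\cdots\oplus N_m$. Direct sums of positive elements are positive (apply axiom (3) of a matrix ordering with the coordinate inclusions and add), so $v\in M_{2mn}(V)^+$ and $w\in M_{2m}(W)^+$; hence $v\otimes w$ lies in $M_{4m^2n}(V\otimes_{\Delta}W)_+$ by definition, taking $\alpha=I$. Index the coordinates of $v$ by triples $(s,t,i)$ with $s\in\{1,\dots,m\}$, $t\in\{1,\dots,n\}$, $i\in\{1,2\}$, and those of $w$ by pairs $(s',k)$. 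Define $\alpha\in M_{2n,\,4m^2n}$, with rows indexed by $(\epsilon,g)$ for $\epsilon\in\{1,2\}$ and $g\in\{1,\dots,n\}$, by declaring its only nonzero entries to be
$$\alpha_{(1,g),\,((s,t,1),(s,1))}=(e_s)_{gt},\qquad \alpha_{(2,g),\,((s,t,2),(s,2))}=\delta_{gt}.$$
Because $v$ and $w$ are block-diagonal in $s$, and $v$ is $I_n\otimes M_s$ on its $s$-th block, the entries of $v\otimes w$ carry factors $\delta_{s\sigma}\delta_{t\tau}\delta_{s'\sigma'}$; since $\alpha$ is supported only on coordinates with $s=s'$ and $i=k$, a direct computation gives
$$\alpha(v\otimes w)\alpha^*=\begin{pmatrix} \sum_s (e_se_s^*)\otimes(a_s\otimes b_s) & \sum_s e_s\otimes(x_s\otimes y_s)\\[2pt] \sum_s e_s^*\otimes(x_s^*\otimes y_s^*) & I_n\otimes\sum_s (d_s\otimes c_s)\end{pmatrix}.$$
The off-diagonal block is exactly $z$, and the $(2,1)$-block is $z^*$. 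Taking $u_1,u_2$ to be the diagonal blocks, the whole matrix is $\alpha(v\otimes w)\alpha^*\in M_{2n}(V\otimes_{\Delta}W)_+$, while $u_1=\alpha_1(v\otimes w)\alpha_1^*$ and $u_2=\alpha_2(v\otimes w)\alpha_2^*$ for $\alpha_1,\alpha_2\in M_{n,4m^2n}$ the top and bottom halves of $\alpha$, whence $u_1,u_2\in M_n(V\otimes_{\Delta}W)_+$ directly from the definition of the cone.

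The step I expect to be the main obstacle is the routing of the matricial coefficients $e_s$. A naive compression of $M_s\otimes N_s$ to two coordinates only realizes \emph{scalar} coefficients in the off-diagonal block, so the inflation $I_n\otimes M_s$ is essential: it supplies the $M_n$-worth of freedom that lets the choice $A_s=e_s$, $D_s=I_n$ place $A_sD_s^*=e_s$ off the diagonal while $A_sA_s^*$ and $D_sD_s^*$ stay on it. One then has to check that the unwanted pairings---those with $s\ne s'$, or with $(i,k)\notin\{(1,1),(2,2)\}$---drop out, which they do precisely because of the Kronecker factors $\delta_{s\sigma}\delta_{t\tau}$ forced by the direct-sum structure together with the restricted support of $\alpha$.
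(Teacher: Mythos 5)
Your proof is correct: the level-$1$ application of \cite[Theorem 3.4]{S} (with the rescaling), the positivity of the direct sums $v$ and $w$, the block computation of $\alpha(v\otimes w)\alpha^*$ (the Kronecker deltas do kill every cross term), and the identification of the diagonal blocks as $\alpha_i(v\otimes w)\alpha_i^*$ with $\alpha_i$ the two halves of $\alpha$ all check out, so the whole matrix and its diagonal blocks lie in the $\Delta$-cones directly from the definition.

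The paper runs the same engine --- matrix regularity produces positive $2\times 2$ completions, their tensor is $\Delta$-positive, and a scalar compression routes the $(1,2)\otimes(1,2)$ entry into the corner --- but from a different decomposition of $z$: it starts from the standard operator-space fact that every $z\in M_n(V\otimes W)$ can be written as $z=\alpha(v\otimes w)\beta^*$ for a \emph{single} pair $v\in M_k(V)$, $w\in M_l(W)$ and scalar matrices $\alpha,\beta\in M_{n,kl}$. Regularity is then applied once, at matrix levels $k$ and $l$, giving completions $\begin{pmatrix} v_1&v\\ v^*&v_2\end{pmatrix}$, $\begin{pmatrix} w_1&w\\ w^*&w_2\end{pmatrix}$, and the required element is the single congruence
$$\begin{pmatrix} \alpha&0&0&0\\ 0&0&0&\beta\end{pmatrix}\Bigl(\begin{pmatrix} v_1&v\\ v^*&v_2\end{pmatrix}\otimes\begin{pmatrix} w_1&w\\ w^*&w_2\end{pmatrix}\Bigr)\begin{pmatrix} \alpha^*&0\\ 0&0\\ 0&0\\ 0&\beta^*\end{pmatrix},$$
with $u_1=\alpha(v_1\otimes w_1)\alpha^*$ and $u_2=\beta(v_2\otimes w_2)\beta^*$. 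Because the matricial coefficients of $z$ are absorbed into $\alpha$ and $\beta$ at the outset, all of your index routing disappears: the freedom to put different matrices $\alpha$, $\beta$ in the top and bottom row-blocks of the compressing matrix plays exactly the role of your inflation $I_n\otimes M_s$. What your route buys is self-containedness --- nothing is needed beyond the naive representation $z=\sum_s e_s\otimes x_s\otimes y_s$ and regularity at level $1$ --- at the cost of the bookkeeping; what the paper's buys is brevity, at the cost of quietly invoking the representation $z=\alpha(v\otimes w)\beta^*$, whose justification is essentially the rank-one decomposition of the coefficients $e_s$ that your construction handles by hand.
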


\begin{proof}
Suppose that $z=\alpha (v \otimes w) \beta^*$ for $v \in M_k(V), w \in M_l(W)$ and $\alpha, \beta \in M_{n,kl}$. By the matrix regularity of $V$ and $W$, there exist $v_1, v_2 \in M_k(V)^+$ and $w_1, w_2 \in M_l(W)^+$ such that $$\begin{pmatrix} v_1 & v \\ v^* & v_2 \end{pmatrix} \in M_{2k}(V)^+ \qquad \text{and} \qquad \begin{pmatrix} w_1 & w \\ w^* & w_2 \end{pmatrix} \in M_{2l}(W)^+.$$ Because the matrix $$\begin{pmatrix} \alpha (v_1 \otimes w_1) \alpha^* & \alpha (v \otimes w) \beta^* \\ \beta (v^* \otimes w^*) \alpha^* & \beta (v_2 \otimes w_2) \beta^* \end{pmatrix} = \begin{pmatrix} \alpha&0&0&0 \\ 0&0&0&\beta \end{pmatrix} \begin{pmatrix} v_1 & v \\ v^* & v_2 \end{pmatrix} \otimes \begin{pmatrix} w_1 & w \\ w^* & w_2 \end{pmatrix} \begin{pmatrix} \alpha^* &0 \\ 0&0 \\ 0&0 \\ 0&\beta^* \end{pmatrix}.$$
belongs to $M_{2n}(V \otimes_{\Delta} W)_+ $, we can take $$u_1= \alpha (v_1 \otimes w_1) \alpha^* \qquad \text{and} \qquad u_2 =\beta (v_2 \otimes w_2) \beta^*.$$

\end{proof}

\begin{prop}
Suppose that $V$ and $W$ are matrix regular operator spaces. Then we have
\begin{enumerate}
\item $\|z_1+z_2\|_{\Delta} \le \|z_1\|_{\Delta}+\|z_2\|_{\Delta}, \qquad z_1, z_2 \in M_n(V \otimes W)$,
\item $\|z_1+z_2\|_{\delta} \le \|z_1\|_{\delta}+\|z_2\|_{\delta}, \qquad z_1, z_2 \in M_n(V \otimes W)$.
\end{enumerate}
\end{prop}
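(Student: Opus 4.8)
The plan is to treat both inequalities by the same template dictated by the defining formulas for $\|\cdot\|_{\Delta}$ and $\|\cdot\|_{\delta}$. Fix $\varepsilon>0$ and choose, for $i=1,2$, elements $u_i,u_i'$ with $\begin{pmatrix} u_i & z_i \\ z_i^* & u_i' \end{pmatrix}$ in the relevant positive cone at level $2n$ and $\max\{|u_i|,|u_i'|\}$ within $\varepsilon$ of $\|z_i\|$. Since each cone $M_{2n}(V\otimes_{\delta} W)_+$, respectively $M_{2n}(V\otimes_{\Delta} W)_+$, is closed under addition (verified below in each case), the sum of these two $2\times2$ block matrices is again a witness for $z_1+z_2$, so $\|z_1+z_2\|\le\max\{|u_1+u_2|,|u_1'+u_2'|\}$. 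Combining this with the elementary inequality $\max\{a_1+a_2,b_1+b_2\}\le\max\{a_1,b_1\}+\max\{a_2,b_2\}$ reduces everything to the subadditivity of the modulus on the positive cone, that is, $|u_1+u_2|\le|u_1|+|u_2|$ for positive $u_1,u_2$; letting $\varepsilon\to0$ then finishes the proof.

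For the $\delta$-quantity this subadditivity is immediate, giving (2). Closure of $M_n(V\otimes_{\delta} W)_+$ under addition holds because a sum of positive matrices $[(\varphi\otimes\psi)((u_1)_{ij})]+[(\varphi\otimes\psi)((u_2)_{ij})]$ is positive; and since $\varphi\otimes\psi$ is linear, for each fixed admissible pair $(\varphi,\psi)$ the triangle inequality for the operator norm gives $\|[(\varphi\otimes\psi)((u_1+u_2)_{ij})]\|\le\|[(\varphi\otimes\psi)((u_1)_{ij})]\|+\|[(\varphi\otimes\psi)((u_2)_{ij})]\|$. Taking suprema, and using that a supremum of a sum is at most the sum of the suprema, yields $|u_1+u_2|_{\delta}\le|u_1|_{\delta}+|u_2|_{\delta}$.

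The $\Delta$-quantity is the main obstacle, and both the closure of the cone under addition and the subadditivity of $|\cdot|_{\Delta}$ will come from a single block-diagonal construction, establishing (1). Write $u_i=\alpha_i(v_i\otimes w_i)\alpha_i^*$ with $v_i\in M_{k_i}(V)^+$, $w_i\in M_{l_i}(W)^+$, $\alpha_i\in M_{n,k_il_i}$; after absorbing the scalars $\|v_i\|,\|w_i\|$ into $\alpha_i$ we may assume $\|v_i\|=\|w_i\|=1$, so that $\|\alpha_i\|^2$ approximates $|u_i|_{\Delta}$ to within $\varepsilon$. Put $v=v_1\oplus v_2$ and $w=w_1\oplus w_2$, which are positive with $\|v\|=\|w\|=1$. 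The difficulty is that $v\otimes w$ also contains the unwanted cross blocks $v_1\otimes w_2$ and $v_2\otimes w_1$; to discard them I compress by the coordinate isometry $\gamma$ identifying $\mathbb C^{k_1l_1}\oplus\mathbb C^{k_2l_2}$ with the two diagonal tensor sub-blocks of $\mathbb C^{(k_1+k_2)(l_1+l_2)}$, so that $\gamma^*(v\otimes w)\gamma=(v_1\otimes w_1)\oplus(v_2\otimes w_2)$, the off-diagonal blocks of $v$ killing the cross terms. Setting $\alpha=(\alpha_1\ \alpha_2)\gamma^*\in M_{n,(k_1+k_2)(l_1+l_2)}$ gives $\alpha(v\otimes w)\alpha^*=\alpha_1(v_1\otimes w_1)\alpha_1^*+\alpha_2(v_2\otimes w_2)\alpha_2^*=u_1+u_2$, which already shows the cone is closed under addition. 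Finally, since $\gamma$ is an isometry we have $\gamma^*\gamma=I$ and hence $\alpha\alpha^*=\alpha_1\alpha_1^*+\alpha_2\alpha_2^*$, so $\|\alpha\|^2\le\|\alpha_1\|^2+\|\alpha_2\|^2$; as $\|v\|=\|w\|=1$ this yields $|u_1+u_2|_{\Delta}\le\|\alpha\|^2\le\|\alpha_1\|^2+\|\alpha_2\|^2$, which is within $2\varepsilon$ of $|u_1|_{\Delta}+|u_2|_{\Delta}$. This is exactly the subadditivity needed to run the template above.
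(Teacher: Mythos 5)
Your proof is correct and takes essentially the same approach as the paper: reduce subadditivity of $\|\cdot\|_{\Delta}$ and $\|\cdot\|_{\delta}$ to subadditivity of the moduli $|\cdot|_{\Delta}$, $|\cdot|_{\delta}$ on the positive cones by summing the $2\times 2$ witness matrices, with the $\Delta$-case handled by the block-diagonal representation of $u_1+u_2$. Your compression by the coordinate isometry $\gamma$ is exactly the paper's row matrix $\begin{pmatrix} \alpha_1 & 0 & 0 & \alpha_2 \end{pmatrix}$, whose inserted zero blocks kill the cross terms $v_1 \otimes w_2$ and $v_2 \otimes w_1$, leading to the same bound $\|\alpha_1\alpha_1^*+\alpha_2\alpha_2^*\| \le \|\alpha_1\|^2+\|\alpha_2\|^2$.
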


\begin{proof}
(1) First, we suppose that elements $z_1, z_2$ belong to $M_n(V \otimes_{\Delta} W)_+$. There exist $v_i \in M_k(V)^+ , w_i \in M_l(W)^+ , \alpha_i \in M_{n,kl}$ ($i=1,2$) such that $$z_i = \alpha_i (v_i \otimes w_i) \alpha_i^* \qquad \text{and} \qquad  \|\alpha_i\|^2 \|v_i\| \|w_i\|<|z_i|_{\Delta}+\varepsilon \qquad (i=1,2).$$ We may assume that $\|v_1\| = \|w_1\| = \|v_2\| = \|w_2\|=1$ by homogeneity. Since $$\begin{aligned} z_1+z_2 & = \alpha_1 v_1 \otimes w_1 \alpha_1^* + \alpha_2 v_2 \otimes w_2 \alpha_2^* \\ & = \begin{pmatrix} \alpha_1 &0&0& \alpha_2 \end{pmatrix} \begin{pmatrix} v_1&0 \\ 0&v_2 \end{pmatrix} \otimes \begin{pmatrix} w_1&0 \\ 0&w_2 \end{pmatrix} \begin{pmatrix} \alpha_1^* \\ 0 \\ 0 \\ \alpha_2^* \end{pmatrix}, \end{aligned}$$ we have $$\begin{aligned} |z_1+z_2|_{\Delta} & \le \|\begin{pmatrix} \alpha_1 &0&0& \alpha_2 \end{pmatrix} \|^2 \| \begin{pmatrix} v_1&0 \\ 0&v_2 \end{pmatrix}\| \|\begin{pmatrix} w_1&0 \\ 0&w_2 \end{pmatrix} \| \\ & = \|\alpha_1 \alpha_1^*+\alpha_2 \alpha_2^*\| \\ & \le \|\alpha_1\|^2+\|\alpha_2\|^2 \\ & < |z_1|_{\Delta}+|z_2|_{\Delta} +2 \varepsilon,\end{aligned}$$ thus, $|z_1+z_2|_{\Delta} \le |z_1|_{\Delta}+|z_2|_{\Delta}$.

Next, we consider general elements $z_1, z_2$ in $M_n(V \otimes W)$. There exist elements $u_i, u_i'$ in $M_n(V \otimes_{\Delta} W)_+$ ($i=1,2$) such that $$\begin{pmatrix} u_i & z_i \\ z_i^* & u_i' \end{pmatrix} \in M_{2n}(V \otimes_{\Delta} W)_+ \qquad \text{and} \qquad |u_i|_{\Delta}, |u_i'|_{\Delta} < \|z_i\|_{\Delta}+\varepsilon.$$ From $$\begin{pmatrix} u_1+u_2 & z_1+z_2 \\ z_1^*+z_2^* & u_1'+u_2' \end{pmatrix} \in M_{2n}(V \otimes_{\Delta} W)_+,$$ it follows that $$\begin{aligned} \|z_1+z_2\|_{\Delta} & \le \max \{ |u_1+u_2|_{\Delta}, |u_1'+u_2'|_{\Delta} \} \\ & \le \max \{ |u_1|_{\Delta}+|u_2|_{\Delta}, |u_1'|_{\Delta}+|u_2'|_{\Delta} \} \\ & <\|z_1\|_{\Delta}+\|z_2\|_{\Delta}+2\varepsilon. \end{aligned}$$

(2) First, we suppose that elements $z_1, z_2$ belong to $M_n(V \otimes_{\delta} W)_+$. For c.c.p. maps $\varphi : V \to M_k$ and $\psi : W \to M_l$, we have $$\|(\varphi \otimes \psi)_n(z_1+z_2)\| \le \|(\varphi \otimes \psi)_n(z_1)\|+\|(\varphi \otimes \psi)_n(z_2)\| \le |z_1|_\delta + |z_2|_\delta,$$ thus, $|z_1+z_2|_{\delta} \le |z_1|_{\delta}+|z_2|_{\delta}$. The remaining proof is similar to (1).
\end{proof}

\begin{prop}\label{basic}
Suppose that $V$ and $W$ are matrix regular operator spaces.
\begin{enumerate}
\item $M_n(V \otimes_{\Delta} W)_+$ is a subcone of the proper cone $M_n(V \otimes_{\delta} W)_+$. \item $\| z \|_{\delta} \le \|z\|_{\Delta},\qquad z \in M_n(V \otimes W)$. \item $\| \cdot \|_{\delta}$ and $\| \cdot \|_{\Delta}$ are norms on $M_n(V \otimes W)$.
\end{enumerate}
\end{prop}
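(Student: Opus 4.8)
The plan is to handle the three assertions in order, relying throughout on two elementary identities. First, for a linear map $T:V\otimes W\to M_{pq}$ and a scalar matrix $\alpha\in M_{n,kl}$ one has $T_n(\alpha(v\otimes w)\alpha^*)=(\alpha\otimes I_{pq})\,T_{kl}(v\otimes w)\,(\alpha^*\otimes I_{pq})$; second, the canonical shuffle gives $(\varphi\otimes\psi)_{kl}(v\otimes w)=\Sigma\,(\varphi_k(v)\otimes\psi_l(w))\,\Sigma^*$ for a permutation unitary $\Sigma$ depending only on $k,l,p,q$. For the containment in (1), take $z=\alpha(v\otimes w)\alpha^*$ with $v\in M_k(V)^+,\,w\in M_l(W)^+$ and continuous c.p.\ maps $\varphi:V\to M_p,\,\psi:W\to M_q$; the shuffle identity gives $(\varphi\otimes\psi)_{kl}(v\otimes w)=\Sigma(\varphi_k(v)\otimes\psi_l(w))\Sigma^*\ge 0$ since $\varphi_k(v)\ge0$ and $\psi_l(w)\ge0$, and conjugating by $\alpha\otimes I$ keeps it positive, so $z\in M_n(V\otimes_{\delta}W)_+$. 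That $M_n(V\otimes_\Delta W)_+$ is a cone is immediate for positive scalars (absorb into $\alpha$) and for sums is exactly the block computation already used in the proof of the preceding proposition, writing $z_1+z_2=\beta(\,\mathrm{diag}(v_1,v_2)\otimes\mathrm{diag}(w_1,w_2)\,)\beta^*$. Properness of the $\delta$-cone follows by separation: if $\pm z\in M_n(V\otimes_\delta W)_+$, then $(\varphi\otimes\psi)_n(z)=0$ for all continuous c.p.\ $\varphi,\psi$, and taking $k=l=1$ together with the matrix-regularity decomposition $V^*_{sa}=V^*_+-V^*_+$ (so continuous positive functionals span $V^*$, likewise $W^*$) shows the functionals $f\otimes g$ separate $V\otimes W$, forcing each $z_{ij}=0$.

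For (2), I first prove $|z|_\delta\le|z|_\Delta$ for $z\in M_n(V\otimes_\Delta W)_+$. Given any representation $z=\alpha(v\otimes w)\alpha^*$ and any c.c.p.\ $\varphi,\psi$, the two identities yield $\|(\varphi\otimes\psi)_n(z)\|\le\|\alpha\|^2\,\|\varphi_k(v)\|\,\|\psi_l(w)\|\le\|\alpha\|^2\|v\|\|w\|$, where complete contractivity gives $\|\varphi_k(v)\|\le\|v\|$ and $\|\psi_l(w)\|\le\|w\|$. Taking the supremum over $\varphi,\psi$ and then the infimum over representations gives $|z|_\delta\le|z|_\Delta$. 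For general $z$, by (1) every admissible pair $(u,u')$ for $\|z\|_\Delta$ (whose diagonal blocks lie in $M_n(V\otimes_\Delta W)_+$) is also admissible for $\|z\|_\delta$, and $\max\{|u|_\delta,|u'|_\delta\}\le\max\{|u|_\Delta,|u'|_\Delta\}$; passing to the infimum over the (smaller) $\Delta$-admissible family yields $\|z\|_\delta\le\|z\|_\Delta$.

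For (3), the triangle inequalities are the content of the preceding proposition, and positive homogeneity follows by conjugating the defining block matrix by $\mathrm{diag}(\mu I,\nu I)$ with $\mu=\lambda|\lambda|^{-1/2}$ and $\nu=|\lambda|^{1/2}$, which stays in the respective cone and scales both $|u|$ and $|u'|$ by $|\lambda|$. Finiteness holds because the infima run over nonempty sets by Proposition \ref{nonempty}, and $\|z\|_\delta\le\|z\|_\Delta<\infty$ by (2). For definiteness it suffices, again by (2), to treat $\|\cdot\|_\delta$: if $\|z\|_\delta=0$, choose $(u_m,u_m')$ with the block matrix in $M_{2n}(V\otimes_\delta W)_+$ and $\max\{|u_m|_\delta,|u_m'|_\delta\}\to0$, and apply any c.c.p.\ $\varphi\otimes\psi$; the norm bound $\|B\|\le\|A\|^{1/2}\|D\|^{1/2}$ for a positive block matrix $\begin{pmatrix}A&B\\B^*&D\end{pmatrix}$ gives $\|(\varphi\otimes\psi)_n(z)\|\le|u_m|_\delta^{1/2}|u_m'|_\delta^{1/2}\to0$, so $(\varphi\otimes\psi)_n(z)=0$ for all c.c.p.\ (hence, by scaling, all continuous c.p.)\ $\varphi,\psi$, and the separation argument of (1) forces $z=0$.

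The only non-formal points are the properness and definiteness steps, and both reduce to the same fact: the functionals $f\otimes g$ with $f,g$ continuous and positive separate $V\otimes W$, which rests on the decomposition $V^*_{sa}=V^*_+-V^*_+$ furnished by matrix regularity. I expect this separation, rather than the bookkeeping with the shuffle identity and the $2\times2$-block norm inequality, to be the substantive step.
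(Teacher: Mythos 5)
Your proposal is correct, and on the containment in (1) and on all of (2) it coincides with the paper's own computations: the shuffle identity giving $(\varphi\otimes\psi)_n(\alpha(v\otimes w)\alpha^*)=\alpha(\varphi_p(v)\otimes\psi_q(w))\alpha^*$, the estimate $\|(\varphi\otimes\psi)_n(\alpha(v\otimes w)\alpha^*)\|\le\|\alpha\|^2\|v\|\|w\|$ yielding $|u|_\delta\le|u|_\Delta$, and then the observation that every $\Delta$-admissible pair $(u,u')$ is $\delta$-admissible. The genuine divergence is in the properness and definiteness steps. The paper argues at the level of maps into matrix algebras: by \cite[Corollary 4.7]{S} the dual of a matrix regular operator space is matrix regular, hence every completely bounded map from $V$ or $W$ into a matrix algebra is a linear combination of completely positive maps, so $(\varphi\otimes\psi)_n(z)=0$ for all continuous c.p.\ $\varphi,\psi$ forces $\|z\|_\vee=0$ in the injective operator space tensor product, whence $z=0$. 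You instead work at the scalar level $k=l=1$: positive functionals span $V^*$ and $W^*$, and the purely algebraic fact that elementary tensor functionals $f\otimes g$ separate points of $V\otimes W$ finishes the argument. Note that the decomposition $V^*_{sa}=V^*_+-V^*_+$ is \emph{not} a direct instance of the regularity axioms for $V$ (those concern $M_n(V)$, not functionals); it is regularity of $V^*$, i.e.\ exactly \cite[Corollary 4.7]{S} applied at level $1$, so you should cite that result just as the paper does. Granting that, the two proofs rest on the same external input; yours avoids the detour through the injective tensor norm, replacing it with linear-algebraic separation, and is in that sense more self-contained, while the paper's is shorter given the cited machinery. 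Two further points in your (3) are fine and slightly refine the paper: the bound $\|B\|\le\|A\|^{1/2}\|D\|^{1/2}$ for a positive block matrix $\begin{pmatrix} A&B\\ B^*&D\end{pmatrix}$ is a valid strengthening of the $\max\{\|A\|,\|D\|\}$ bound the paper uses, and your explicit verification of homogeneity by conjugating the admissible block matrix with $\mathrm{diag}(\mu I,\nu I)$, $\mu=\lambda|\lambda|^{-1/2}$, $\nu=|\lambda|^{1/2}$, fills in a step the paper leaves implicit.
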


\begin{proof}
(1) Suppose that $z \in M_n(V \otimes_\delta W)_+ \cap -M_n(V \otimes_\delta W)_+$. For continuous c.p. maps $\varphi : V \to M_k$ and $\psi : W \to M_l$, we have $$(\varphi \otimes \psi)_n(z) \in M_{nkl}^+ \cap - M_{nkl}^+ = \{ 0 \}.$$ The dual spaces of matrix regular operator spaces are also matrix regular \cite[Corollary 4.7]{S}. It implies that every completely bounded linear map from a matrix regular operator space into a matrix algebra is a linear combination of completely positive maps. Therefore, we have $\|z\|_{M_n(V \widecheck \otimes W)}=0$, which implies that $z=0$. The cone $M_n(V \otimes_{\delta} W)_+$ is proper. Let $$z = \alpha (v \otimes w) \alpha^* \in M_n(V \otimes_\Delta W)_+, \qquad v \in M_p(V)^+, w \in M_q(W)^+, \alpha \in M_{n,pq}.$$ Since $$(\varphi \otimes \psi)_n (z) = \alpha (\varphi_p(v) \otimes \psi_q(w)) \alpha^* \in M_{nkl}^+,$$ $M_n(V \otimes_{\Delta} W)_+$ is a subcone of the proper cone $M_n(V \otimes_{\delta} W)_+$.

(2) Suppose that $\varphi : V \to M_k$ and $\psi : W \to M_l$ are c.c.p. maps. Since $$\|(\varphi \otimes \psi)_n (\alpha (v \otimes w) \alpha^*)\| \le \|\alpha\|^2\|\varphi_p(v)\| \|\psi_q(w)\| \le \|\alpha\|^2 \|v\| \|w\|,$$ we have $$|u|_{\delta} \le |u|_{\Delta}, \qquad u \in M_n(V \otimes_{\Delta} W)_+.$$ For a general element $z$ in $M_n(V \otimes W)$, there exist elements $u, u'$ in $M_n(V \otimes W)$ such that $$|u|_{\Delta}, |u'|_{\Delta} < \|z\|_{\Delta}+\varepsilon \qquad \text{and} \qquad \begin{pmatrix} u&z \\ z^*&u' \end{pmatrix} \in M_{2n}(V \otimes_\Delta W)_+.$$ Since $|u|_{\delta} \le |u|_{\Delta}$, $|u'|_{\delta} \le |u'|_{\Delta}$ and $M_{2n}(V \otimes_{\Delta} W)_+ \subset M_{2n}(V \otimes_{\delta} W)_+$, we see that $$\|z\|_{\delta} \le \max \{ |u|_\delta, |u'|_\delta \} \le \max \{ |u|_\Delta, |u'|_\Delta \} < \|z\|_{\Delta}+\varepsilon.$$

(3) Suppose that $\|z\|_{\delta}=0$ for $z \in M_n(V \otimes W)$. There exist $u, u' \in M_n(V \otimes_\delta W)_+$ such that $$\begin{pmatrix} u&z \\ z^*&u' \end{pmatrix} \in M_{2n}(V \otimes_\delta W)_+ \qquad \text{and} \qquad |u|_{\delta}, |u'|_{\delta} < \varepsilon.$$ For c.c.p. maps $\varphi : V \to M_k$ and $\psi : W \to M_l$, we have $$\begin{pmatrix} (\varphi \otimes \psi)_n(u) & (\varphi \otimes \psi)_n(z) \\ (\varphi \otimes \psi)_n(z^*) &  (\varphi \otimes \psi)_n(u') \end{pmatrix} \in M_{2nkl}^+.$$ It follows that  $$\|(\varphi \otimes \psi)_n(z)\| \le \max \{ \|(\varphi \otimes \psi)_n(u)\|, \|(\varphi \otimes \psi)_n(u')\| \} \le \max \{ |u|_{\delta}, |u'|_{\delta} \} < \varepsilon,$$ thus $(\varphi \otimes \psi)_n(z)=0$. Since every completely bounded linear map from a matrix regular operator space into a matrix algebra is a linear combination of completely positive maps, we have $\|z\|_{M_n(V \widecheck \otimes W)}=0$, which implies that $z=0$. By (2), $\|z\|_{\Delta}=0$ also implies that $z=0$.
\end{proof}

Recall that the positive cones of matrix ordered operator spaces are closed. Therefore, a closure process is required. We denote the closure of $M_n(V \otimes_\delta W)_+$ with respect to norm $\|\cdot\|_{M_n(V \otimes_\delta W)}$ by $M_n(V \otimes_{\delta} W)^+$. We also denote the closure of $M_n(V \otimes_\Delta W)_+$ with respect to norm $\|\cdot\|_{M_n(V \otimes_\Delta W)}$ by $M_n(V \otimes_{\Delta} W)^+$ .

\begin{prop}
Suppose that $V$ and $W$ are matrix regular operator spaces.
\begin{enumerate}
\item The involution is an isometry on $M_n(V \otimes_\Delta W)$ and $M_n(V \otimes_\delta W)$, respectively.
\item Elements in $M_n(V \otimes_\Delta W)^+$ and $M_n(V \otimes_\delta W)^+$ are self-adjoint.
\item $M_n(V \otimes_\delta W)^+ = M_n(V \otimes_\delta W)_+$.
\item $M_n(V \otimes_\Delta W)^+$ is a subcone of the proper cone $M_n(V \otimes_\delta W)^+$.
\end{enumerate}
\end{prop}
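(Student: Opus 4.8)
The plan is to dispatch the self-adjointness statements first, since they feed into the isometry claim, and to treat the two cones largely in parallel: the $\delta$-side resting on the evaluation maps $(\varphi \otimes \psi)_n$ and the $\Delta$-side on the explicit representation $\alpha(v \otimes w)\alpha^*$.

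For part (2), note that positive $v \in M_k(V)^+$ and $w \in M_l(W)^+$ are self-adjoint, and a short entrywise check shows $v \otimes w$ is self-adjoint in $M_{kl}(V \otimes W)$; hence every generator $\alpha(v \otimes w)\alpha^*$ of $M_n(V \otimes_\Delta W)_+$ is self-adjoint, and so are all elements of its closure once part (1) is available. For the $\delta$-cone I would argue as follows: if $z \in M_n(V \otimes_\delta W)_+$, then for every continuous c.p. pair $\varphi, \psi$ the image $(\varphi \otimes \psi)_n(z)$ lies in $M_{nkl}^+$ and is therefore self-adjoint; since c.p. maps are $*$-preserving, $(\varphi \otimes \psi)_n(z^*) = ((\varphi \otimes \psi)_n(z))^* = (\varphi \otimes \psi)_n(z)$, so $(\varphi \otimes \psi)_n(z-z^*)=0$ for all c.p. $\varphi, \psi$. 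Invoking that every completely bounded map into a matrix algebra is a linear combination of completely positive ones (matrix regularity of the duals, exactly as used in Proposition \ref{basic}), this forces $\|z-z^*\|_{M_n(V \widecheck \otimes W)}=0$, i.e.\ $z=z^*$.

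For part (1), the same identity $(\varphi \otimes \psi)_n(z^*)=((\varphi \otimes \psi)_n(z))^*$ immediately gives $|z^*|_\delta=|z|_\delta$ on the positive cone because the $C^*$-norm on $M_{nkl}$ is $*$-invariant (on the $\Delta$-side $|\cdot|_\Delta$ is $*$-invariant simply because its arguments are self-adjoint). To pass to the full norms on a general $z$, I would conjugate a defining block $\begin{pmatrix} u & z \\ z^* & u' \end{pmatrix}$ of the relevant cone by the self-adjoint unitary $J=\begin{pmatrix} 0 & I_n \\ I_n & 0 \end{pmatrix}$, obtaining $\begin{pmatrix} u' & z^* \\ z & u \end{pmatrix}$, which lies in the same cone: for $\delta$ because conjugating $(\varphi \otimes \psi)_{2n}(\cdot)$ by the corresponding swap unitary in $M_{2nkl}$ preserves positivity, and for $\Delta$ because $J^*\alpha(v \otimes w)\alpha^*J=(J^*\alpha)(v \otimes w)(J^*\alpha)^*$ with $\|J^*\alpha\|=\|\alpha\|$. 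Taking the infimum over such representations gives $\|z^*\| \le \|z\|$ for each norm, and symmetry yields equality.

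For part (3) the key is that each evaluation map is $\|\cdot\|_\delta$-continuous. For c.c.p. $\varphi, \psi$ and any $z$, applying $(\varphi \otimes \psi)_{2n}$ to a defining block $\begin{pmatrix} u & z \\ z^* & u' \end{pmatrix} \in M_{2n}(V \otimes_\delta W)_+$ produces a positive matrix in $M_{2nkl}$, so the $C^*$-estimate for the off-diagonal corner of a positive $2 \times 2$ block gives $\|(\varphi \otimes \psi)_n(z)\| \le \max\{|u|_\delta, |u'|_\delta\}$; infimizing yields $\|(\varphi \otimes \psi)_n(z)\| \le \|z\|_\delta$, and general c.p. maps follow by rescaling by the cb-norm. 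Consequently $M_n(V \otimes_\delta W)_+=\bigcap_{\varphi,\psi}((\varphi \otimes \psi)_n)^{-1}(M_{nkl}^+)$ is an intersection of $\|\cdot\|_\delta$-closed sets, hence already closed, giving $M_n(V \otimes_\delta W)^+=M_n(V \otimes_\delta W)_+$. Finally, part (4) combines the inequality $\|\cdot\|_\delta \le \|\cdot\|_\Delta$ of Proposition \ref{basic}(2) with part (3): $\|\cdot\|_\Delta$-convergence implies $\|\cdot\|_\delta$-convergence, so the $\|\cdot\|_\Delta$-closure of $M_n(V \otimes_\Delta W)_+$ lies in its $\|\cdot\|_\delta$-closure, which by Proposition \ref{basic}(1) and part (3) sits inside the closed proper cone $M_n(V \otimes_\delta W)^+$; that this closure is itself a cone is automatic from continuity of the vector-space operations. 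I expect the main obstacle to be the faithfulness step in part (2) together with the continuity estimate of part (3): both hinge on the decomposition of completely bounded maps into completely positive ones (so that they separate points) and on the recurring $C^*$-inequality $\|B\| \le \max\{\|A\|,\|D\|\}$ for positive $\begin{pmatrix} A & B \\ B^* & D \end{pmatrix}$.
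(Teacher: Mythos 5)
Your proposal is correct and takes essentially the same approach as the paper: the swap-matrix conjugation for the isometry of the involution, the separation argument via decomposability of matrix-valued completely bounded maps for self-adjointness, the corner estimate $\|(\varphi \otimes \psi)_n(z)\| \le \|z\|_\delta$ for closedness of the $\delta$-cone, and the combination of Proposition \ref{basic}(1),(2) with part (3) for part (4). The only cosmetic difference is that you phrase closedness of $M_n(V \otimes_\delta W)_+$ as an intersection of preimages of closed cones under the (now continuous) evaluation maps, where the paper runs the equivalent sequential argument.
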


\begin{proof}
(1) From $$\begin{pmatrix} 0&1 \\ 1&0 \end{pmatrix} \begin{pmatrix} u&z \\ z^*&u' \end{pmatrix} \begin{pmatrix} 0&1 \\ 1&0 \end{pmatrix} = \begin{pmatrix} u'&z^* \\ z&u \end{pmatrix},$$ we see that $\|z\|_\Delta = \|z^*\|_\Delta$ and $\|z\|_\delta = \|z^*\|_\delta$ for $z \in M_n(V \otimes W)$.

(2) It is obvious that elements in $M_n(V \otimes_\Delta W)_+$ are self-adjoint. If $z \in M_n(V \otimes_\delta W)_+$ and $\varphi : V \to M_k, \psi : W \to M_l$ are c.c.p. maps, then we have $(\varphi \otimes \psi)_n(z)=(\varphi \otimes \psi)_n(z)^*=(\varphi \otimes \psi)_n(z^*)$. Because every completely bounded map from a matrix regular operator space into a matrix algebra is a linear combination of completely positive maps, $\|z-z^*\|_{M_n(V \otimes_\vee W)}=0$, and hence $z=z^*$. By (1), elements in their closures are also self-adjoint.

(3) Suppose that $z_j \in M_n(V \otimes_\delta W)_+$ converges to $z$ with respect to norm $\|\cdot\|_\delta$. There exist $u_j, u_j'$ in $M_n(V \otimes_\delta W)_+$ such that $$\begin{pmatrix} u_j & z-z_j \\ z^*-z_j^* & u_j' \end{pmatrix} \in M_{2n}(V \otimes_\delta W)_+ \qquad \text{and} \qquad \lim_{j \to \infty} |u_j|_\delta = 0 =\lim_{j \to \infty} |u_j'|_\delta.$$ It follows that $$\|(\varphi \otimes \psi)_n(z-z_j)\| \le \max \{ \|(\varphi \otimes \psi)_n (u_j)\|, \|(\varphi \otimes \psi)_n (u_j')\| \} \le \max \{ |u_j|_\delta, |u_j'|_\delta \} \to 0$$ for all c.c.p. maps $\varphi : V \to M_k, \psi : W \to M_l$, so, $(\varphi \otimes \psi)_n(z) \ge 0$.

(4) It follows from Proposition \ref{basic} (1) and (2).

\end{proof}

\begin{thm}\label{Delta}
Suppose that $V$ and $W$ are matrix regular operator spaces. Then, $$(V \otimes W, \{ \|\cdot\|_{M_n(V \otimes_{\Delta} W)} \}_{n \in \mathbb N}, \{ M_n(V \otimes_{\Delta} W)^+) \}_{n \in \mathbb N})$$ is a matrix regular operator space with a subcross matrix norm.
\end{thm}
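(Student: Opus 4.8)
The plan is to check, in order, the matrix ordered operator space axioms, Ruan's axioms for $\|\cdot\|_{\Delta}$, the subcross inequality, and finally the two defining conditions of matrix regularity. Much is already in hand: the involution is a $\|\cdot\|_\Delta$-isometry, the cones $M_n(V\otimes_\Delta W)^+$ are self-adjoint, proper and closed by construction, and $\|\cdot\|_\Delta$ is a norm. Compatibility $\beta^* M_m(V\otimes_\Delta W)^+\beta\subseteq M_n(V\otimes_\Delta W)^+$ for $\beta\in M_{m,n}$ is immediate on the generating cone, since $\beta^*(\alpha(v\otimes w)\alpha^*)\beta=(\beta^*\alpha)(v\otimes w)(\beta^*\alpha)^*$, and passes to the closure by continuity once (R1) is known. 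For Ruan's axioms I would conjugate a defining witness $\begin{pmatrix} u & z \\ z^* & u'\end{pmatrix}\in M_{2n}(V\otimes_\Delta W)_+$ by $\mathrm{diag}(\gamma,\beta^*)$, use $|\gamma u\gamma^*|_\Delta\le\|\gamma\|^2|u|_\Delta$, and balance the two diagonal blocks by the rescaling $\mathrm{diag}(\sqrt s,1/\sqrt s)$ to obtain $\|\gamma z\beta\|_\Delta\le\|\gamma\|\,\|\beta\|\,\|z\|_\Delta$; the $L^\infty$-direct-sum identity follows from stability of the generating cone under direct sums, after normalizing the representations so that $\|v_i\|=\|w_i\|=1$ and writing $(v_1\otimes w_1)\oplus(v_2\otimes w_2)$ as a compression of $(v_1\oplus v_2)\otimes(w_1\oplus w_2)$.

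A lemma I would isolate at the outset is that $\|u\|_\Delta\le|u|_\Delta$ for $u\in M_n(V\otimes_\Delta W)_+$: since $u=u^*$, the matrix $\begin{pmatrix} u & u \\ u & u\end{pmatrix}=\begin{pmatrix}\alpha\\\alpha\end{pmatrix}(v\otimes w)\begin{pmatrix}\alpha^* & \alpha^*\end{pmatrix}$ lies in $M_{2n}(V\otimes_\Delta W)_+$, so the defining infimum for $\|u\|_\Delta$ is at most $|u|_\Delta$. For the subcross bound I would use matrix regularity of $V$ and $W$ to pick, for $v\in M_k(V)$ and $w\in M_l(W)$, positives $a_1,a_2,b_1,b_2$ with $\begin{pmatrix} a_1 & v \\ v^* & a_2\end{pmatrix}\ge 0$, $\begin{pmatrix} b_1 & w \\ w^* & b_2\end{pmatrix}\ge 0$ and $\|a_i\|<\|v\|+\varepsilon$, $\|b_j\|<\|w\|+\varepsilon$. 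Tensoring these two positive matrices (a legitimate generator of the $\Delta$-cone) and compressing by the coordinate isometry selecting the two diagonal corners produces $\begin{pmatrix} a_1\otimes b_1 & v\otimes w \\ (v\otimes w)^* & a_2\otimes b_2\end{pmatrix}\in M_{2kl}(V\otimes_\Delta W)_+$, whence $\|v\otimes w\|_\Delta\le\max_{i,j}|a_i\otimes b_j|_\Delta\le\max_{i,j}\|a_i\|\,\|b_j\|<(\|v\|+\varepsilon)(\|w\|+\varepsilon)$, giving subcrossness.

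For matrix regularity, condition (2) is the clean half. If $v=v^*$ and $\|v\|_\Delta<1$, a defining witness gives generators $u_1,u_2$ with $\begin{pmatrix} u_1 & v \\ v & u_2\end{pmatrix}\in M_{2n}(V\otimes_\Delta W)_+$ and $\max\{|u_1|_\Delta,|u_2|_\Delta\}<1$; compressing by $\tfrac1{\sqrt2}(1,\pm1)$ shows $-u\le v\le u$ with $u:=\tfrac12(u_1+u_2)\in M_n(V\otimes_\Delta W)_+$, and subadditivity of $|\cdot|_\Delta$ together with the lemma gives $\|u\|_\Delta\le|u|_\Delta<1$. Condition (1) is where the real work lies. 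Given $u\in M_n(V\otimes_\Delta W)^+$ with $-u\le v\le u$, I would first replace $u$ by a generator: a defining witness $\begin{pmatrix} a & u \\ u & d\end{pmatrix}$ for $\|u\|_\Delta$ yields, via the same $\tfrac1{\sqrt2}(1,\pm1)$ compressions, a generator $u_0:=\tfrac12(a+d)$ with $u_0\pm u\in M_n(V\otimes_\Delta W)_+$ and $|u_0|_\Delta<\|u\|_\Delta+\varepsilon$, so that $-u_0\le v\le u_0$. The problem thereby reduces to showing that, whenever the dominator is a generator $u_0$, the matrix $\begin{pmatrix} u_0 & v \\ v & u_0\end{pmatrix}\in M_{2n}(V\otimes_\Delta W)^+$ forces $\|v\|_\Delta\le|u_0|_\Delta$.

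The main obstacle is precisely this passage through the closure. A sequence of generators converging to $\begin{pmatrix} u_0 & v \\ v & u_0\end{pmatrix}$ has diagonal blocks converging to $u_0$ in $\|\cdot\|_\Delta$ but with \emph{uncontrolled} $|\cdot|_\Delta$, which is exactly the gap between $|\cdot|_\Delta$ and $\|\cdot\|_\Delta$ left unresolved in the remark following the definition of $\|\cdot\|_\Delta$. Adding to an approximant a small generator $G_j$ with $|G_j|_\Delta\to 0$ that dominates the error corrects the off-diagonal to $v$ exactly but does not tame the diagonal, so bare approximation cannot close the argument. I therefore expect the honest route to condition (1) to be dual: using that the dual of a matrix regular operator space is again matrix regular \cite[Corollary 4.7]{S} and that every completely bounded map from such a space into a matrix algebra is a linear combination of completely positive maps, I would aim to realize the unit ball of $(M_n(V\otimes W),\|\cdot\|_\Delta)^*$ through functionals assembled from completely positive maps, for which the order relation $-u_0\le v\le u_0$ directly yields $|f(v)|\le|u_0|_\Delta$, and then conclude $\|v\|_\Delta\le|u_0|_\Delta$ by Hahn--Banach, the strict slack $|u_0|_\Delta<\|u\|_\Delta+\varepsilon$ being what drives the separation. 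Establishing this dual description of $\|\cdot\|_\Delta$ cleanly, without inadvertently assuming $\|\cdot\|_\Delta=|\cdot|_\Delta$ on the cone, is the step I expect to be genuinely delicate.
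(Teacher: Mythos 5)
Everything you do complete is correct and coincides with the paper's own argument: the witness-conjugation proof of Ruan's axioms, the direct-sum computation, the subcross estimate via tensoring two regularity witnesses, the lemma $\|u\|_\Delta\le|u|_\Delta$ from $\bigl(\begin{smallmatrix} u& u\\ u& u\end{smallmatrix}\bigr)=\bigl(\begin{smallmatrix}1\\1\end{smallmatrix}\bigr)u\bigl(\begin{smallmatrix}1&1\end{smallmatrix}\bigr)$, and the compression argument for condition (2) of matrix regularity. Your diagnosis that condition (1) --- that $-u\le v\le u$ with respect to the \emph{closed} cone forces $\|v\|_\Delta\le\|u\|_\Delta$ --- is the genuinely delicate point is also accurate; in fact the paper's printed proof verifies only the ``only if'' half of the criterion of \cite[Theorem 3.4]{S} and is silent on this half, so your reading of where the difficulty sits is sharper than the text. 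But your proposal does not close it. The dual route you sketch is circular as stated: the realization of the dual unit ball through maps assembled from completely positive ones is Theorem \ref{duality1}, whose proof invokes \cite[Corollary 4.7]{S} applied to $V\otimes_\Delta W$, i.e.\ presupposes exactly the matrix regularity being proved; re-deriving that dual description without regularity is not easier than condition (1) itself. So, as written, there is a genuine gap.

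The gap closes elementarily, by the very trick you already use once --- you simply apply it in the wrong place. Your reduction step extracts from a defining witness of $\|u\|_\Delta$, via the $\tfrac1{\sqrt2}(1,\pm1)$ compressions, a dominator $u_0$ in the \emph{algebraic} cone with $|u_0|_\Delta<\|u\|_\Delta+\varepsilon$; the same step must then be applied to the diagonal corners of the approximating generators, not to the limit. Concretely, choose $P_j=\bigl(\begin{smallmatrix} a_j& v_j\\ v_j^*& d_j\end{smallmatrix}\bigr)\in M_{2n}(V\otimes_\Delta W)_+$ converging to $\bigl(\begin{smallmatrix} u_0& v\\ v& u_0\end{smallmatrix}\bigr)$ in $\|\cdot\|_\Delta$. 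Then $a_j,d_j\in M_n(V\otimes_\Delta W)_+$, $\|v_j-v\|_\Delta\to0$, and --- this is the key point --- $\|a_j\|_\Delta,\|d_j\|_\Delta\to\|u_0\|_\Delta\le|u_0|_\Delta$, even though $|a_j|_\Delta,|d_j|_\Delta$ are uncontrolled. For large $j$, a defining witness of the \emph{norm} $\|a_j\|_\Delta$ (whose off-diagonal entry is $a_j=a_j^*$), compressed by $\tfrac1{\sqrt2}(1,\pm1)$, yields $e_j\in M_n(V\otimes_\Delta W)_+$ with $e_j-a_j\in M_n(V\otimes_\Delta W)_+$ and $|e_j|_\Delta<|u_0|_\Delta+2\varepsilon$; similarly $f_j$ for $d_j$. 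Since the algebraic cone is closed under sums and contains diagonal direct sums of its elements,
$$\begin{pmatrix} e_j& v_j\\ v_j^*& f_j\end{pmatrix}=P_j+\begin{pmatrix} e_j-a_j& 0\\ 0& f_j-d_j\end{pmatrix}\in M_{2n}(V\otimes_\Delta W)_+,$$
which is a genuine algebraic witness for $v_j$ with controlled $|\cdot|_\Delta$ on the diagonal. Hence $\|v_j\|_\Delta<|u_0|_\Delta+2\varepsilon$, and letting $j\to\infty$ and then $\varepsilon\downarrow0$ gives $\|v\|_\Delta\le\|u\|_\Delta$. So approximation does close the argument: one repairs the diagonal of the approximants using their norms (which converge) rather than their $|\cdot|_\Delta$-values (which need not), and no duality is required. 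Incidentally, the same computation supplies the ``if'' half of the criterion of \cite[Theorem 3.4]{S} that the paper's own proof leaves implicit.
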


\begin{proof}
Let $z_i \in M_{n_i}(V \otimes W)$ ($i=1,2$). There exist $u_i, u_i' \in M_{n_i}(V \otimes_\Delta W)_+$ such that $$\begin{pmatrix} u_i & z_i \\ z_i^* & u_i' \end{pmatrix} \in M_{2n_i}(V \otimes_\Delta W)_+ \qquad \text{and} \qquad |u_i|_{\Delta}, |u_i'|_{\Delta} < \|z_i\|_{\Delta}+\varepsilon \qquad (i=1,2).$$ Let $$u_i =\alpha_i v_i \otimes w_i \alpha_i^*,\qquad \alpha_i \in M_{n_i,k_i l_i}, v_i \in M_{k_i}(V)^+, w_i \in M_{l_i}(W)^+$$ with $\|\alpha_i\|^2\|v_i\|\|w_i\|<|u_i|_\Delta+\varepsilon'$. We may assume that $\|v_i\|=\|w_i\|=1$ by homogeneity. Since $$\begin{pmatrix} u_1 & 0 \\ 0 & u_2 \end{pmatrix} = \begin{pmatrix} \alpha_1 &0&0&0 \\ 0&0&0& \alpha_2 \end{pmatrix} \begin{pmatrix} v_1 & 0 \\ 0 & v_2 \end{pmatrix} \otimes \begin{pmatrix} w_1 & 0 \\ 0 & w_2 \end{pmatrix} \begin{pmatrix} \alpha_1^* & 0 \\ 0&0 \\ 0&0 \\ 0& \alpha_2^* \end{pmatrix},$$
we have $$|\begin{pmatrix} u_1 & 0 \\ 0 & u_2 \end{pmatrix}|_\Delta \le \max \{ \|\alpha_1\|^2, \|\alpha_2\|^2 \} < \max\{|u_1|_\Delta, |u_2|_\Delta \} +\varepsilon'.$$
Consequently, $$|\begin{pmatrix} u_1 & 0 \\ 0 & u_2 \end{pmatrix}|_\Delta \le \max \{ |u_1|_\Delta,|u_2|_\Delta \}.$$
From $$\begin{pmatrix} u_1& & z_1 & \\ &u_2&&z_2 \\ z_1^*&& u_1'& \\ &z_2^*&&u_2' \end{pmatrix} = \begin{pmatrix} 1&&&\\&&1&\\&1&&\\&&&1 \end{pmatrix} \begin{pmatrix} u_1&z_1&&\\z_1^*&u_1'&&\\&&u_2&z_2\\&&z_2^*&u_2'\end{pmatrix} \begin{pmatrix} 1&&&\\&&1&\\&1&&\\&&&1 \end{pmatrix} \in M_{2(n_1+n_2)}(V \otimes_\Delta W)_+,$$ it follows that $$\begin{aligned} \| \begin{pmatrix} z_1&0\\0&z_2 \end{pmatrix} \|_{\Delta} & \le \max \{ |\begin{pmatrix} u_1&0\\0&u_2\end{pmatrix}|_{\Delta}, |\begin{pmatrix} u_1'&0 \\ 0&u_2' \end{pmatrix}|_{\Delta} \} \\ & \le \max \{ |u_1|_\Delta, |u_2|_\Delta, |u'_1|_\Delta, |u'_2|_\Delta \} \\ & < \max \{ \|z_1\|_{\Delta}, \|z_2\|_{\Delta} \} + \varepsilon.\end{aligned}$$ Let $z \in M_n(V \otimes W)$ and $\alpha, \beta \in M_{m,n}$. There exist $u,u' \in M_n(V \otimes_\Delta W)_+$ such that $$\begin{pmatrix} u & z \\ z^* & u' \end{pmatrix} \in M_{2n}(V \otimes_\Delta W)_+ \qquad \text{and} \qquad |u|_{\Delta}, |u'|_{\Delta} < \|z\|_{\Delta}+\varepsilon.$$ We may assume that $\|\alpha\|=\|\beta\|$ by homogeneity. Since $$\begin{pmatrix} \alpha u \alpha^* & \alpha z \beta \\ \beta^* z^* \alpha^* & \beta^* u' \beta \end{pmatrix} = \begin{pmatrix} \alpha & 0 \\ 0 & \beta^* \end{pmatrix} \begin{pmatrix} u & z \\ z^* & u' \end{pmatrix} \begin{pmatrix} \alpha^* & 0 \\ 0 & \beta \end{pmatrix} \in M_{2n}(V \otimes_{\Delta} W)_+,$$ we have $$\begin{aligned} \|\alpha z \beta\|_{\Delta} & \le \max \{ |\alpha u \alpha^*|_{\Delta}, |\beta^* u' \beta|_{\Delta} \} \\ & \le \max \{ \|\alpha\|^2 |u|_{\Delta}, \|\beta\|^2|u'|_{\Delta} \} \\ & < \|\alpha \| \|\beta\| (\|z\|_{\Delta} + \varepsilon).\end{aligned}$$ Hence, $(V \otimes W, \{ \|\cdot\|_{M_n(V \otimes_{\Delta} W)} \}_{n \in \mathbb N})$ is an operator space.

Let $v \in M_k(V)$ and $w \in M_l(W)$ with $\|v\|<1, \|w\|<1$. There exist $v_1, v_2 \in M_k(V)^+_{\|\cdot\|<1}$ and $w_1, w_2 \in M_l(W)^+_{\|\cdot\|<1}$ such that $$\begin{pmatrix} v_1 & v \\ v^* & v_2 \end{pmatrix} \in M_{2k}(V)^+ \qquad \text{and} \qquad \begin{pmatrix} w_1 & w \\ w^* & w_2 \end{pmatrix} \in M_{2l}(W)^+.$$ From $$\begin{aligned} \begin{pmatrix} v_1 \otimes w_1 & v \otimes w \\ v^* \otimes w^* & v_2 \otimes w_2 \end{pmatrix} & = \begin{pmatrix} 1&0&0&0 \\ 0&0&0&1 \end{pmatrix} \begin{pmatrix} v_1 & v \\ v^* & v_2 \end{pmatrix} \otimes \begin{pmatrix} w_1 & w \\ w^* & w_2 \end{pmatrix} \begin{pmatrix} 1&0 \\ 0&0 \\ 0&0 \\ 0&1 \end{pmatrix} \\ & \in M_{2kl}(V \otimes_{\Delta} W)_+,\end{aligned}$$
it follows that $$\|v \otimes w\|_{\Delta} \le \max \{ |v_1 \otimes w_1|_{\Delta}, |v_2 \otimes w_2|_{\Delta} \} \le \max \{ \|v_1\| \|w_1\|, \|v_2\| \|w_2\| \} <1.$$ Therefore, $\|\cdot\|_\Delta$ is a subcross matrix norm.

If $u \in M_n(V \otimes_{\Delta} W)_+$, then $$\begin{pmatrix} u&u \\ u&u \end{pmatrix} = \begin{pmatrix} 1 \\ 1 \end{pmatrix} u \begin{pmatrix} 1&1 \end{pmatrix} \in M_{2n}(V \otimes_\Delta W)_+,$$ and consequently, $\|u\|_{\Delta} \le |u|_{\Delta}$. Let $\|z\|_{\Delta}<1$. There exist $u, u' \in M_n(V \otimes_{\Delta} W)_+$ such that $$\begin{pmatrix} u &z \\ z^* & u' \end{pmatrix} \in M_{2n}(V \otimes_\Delta W)_+ \qquad \text{and} \qquad |u|_{\Delta}, |u'|_{\Delta}<1.$$ The matrix regularity follows from $$\|u\|_{\Delta} \le |u|_{\Delta} <1 \qquad \text{and} \qquad \|u'\|_{\Delta} \le |u'|_{\Delta} < 1.$$
\end{proof}

\begin{thm}\label{delta}
Suppose that $V$ and $W$ are matrix regular operator spaces. Then, $$(V \otimes W, \{ \|\cdot\|_{M_n(V \otimes_{\delta} W)} \}_{n \in \mathbb N}, \{ M_n(V \otimes_{\delta} W)^+ \}_{n \in \mathbb N})$$ is a matrix regular operator space with a subcross matrix norm.
\end{thm}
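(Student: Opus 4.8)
The plan is to transcribe the proof of Theorem~\ref{Delta} almost line for line, since the two constructions are formally parallel. The involution has already been shown to be an isometry, and the cones $M_n(V\otimes_\delta W)^+=M_n(V\otimes_\delta W)_+$ are closed and proper, by the propositions established above; so it remains to verify Ruan's axioms, the subcross property, and the two matrix-regularity conditions for $(V\otimes W,\{\|\cdot\|_{M_n(V\otimes_\delta W)}\}_n,\{M_n(V\otimes_\delta W)_+\}_n)$.

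For the operator space axioms I would first note that the $\delta$-cone is functorial under the operations in play: for $\gamma\in M_{m,n}$ and c.c.p.\ maps $\varphi:V\to M_k$, $\psi:W\to M_l$ one has $(\varphi\otimes\psi)_m(\gamma u\gamma^*)=(\gamma\otimes I_{kl})\,(\varphi\otimes\psi)_n(u)\,(\gamma\otimes I_{kl})^*$, so $\gamma u\gamma^*\in M_m(V\otimes_\delta W)_+$ whenever $u\in M_n(V\otimes_\delta W)_+$, and $|\gamma u\gamma^*|_\delta\le\|\gamma\|^2|u|_\delta$. Substituting this into the same congruence $\begin{pmatrix}\alpha&0\\0&\beta^*\end{pmatrix}\begin{pmatrix}u&z\\z^*&u'\end{pmatrix}\begin{pmatrix}\alpha^*&0\\0&\beta\end{pmatrix}$ used in Theorem~\ref{Delta} yields $\|\alpha z\beta\|_\delta\le\|\alpha\|\|\beta\|\|z\|_\delta$. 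The direct-sum estimate is in fact cleaner here than on the $\Delta$-side: applying $\varphi\otimes\psi$ entrywise to $u_1\oplus u_2$ produces the orthogonal sum $(\varphi\otimes\psi)_{n_1}(u_1)\oplus(\varphi\otimes\psi)_{n_2}(u_2)$, so $|u_1\oplus u_2|_\delta=\max\{|u_1|_\delta,|u_2|_\delta\}$, and the passage to general $z_1,z_2$ uses the identical permutation similarity of the four $2\times2$ blocks.

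For the subcross property I would repeat the argument of Theorem~\ref{Delta}: dominate $v\in M_k(V)$, $w\in M_l(W)$ with $\|v\|,\|w\|<1$ by positive $v_1,v_2,w_1,w_2$ of norm $<1$ using matrix regularity of $V$ and $W$, and form the block $\begin{pmatrix}v_1\otimes w_1&v\otimes w\\ v^*\otimes w^*&v_2\otimes w_2\end{pmatrix}$. The same factorization as in Theorem~\ref{Delta} exhibits it as an element of the $\Delta$-cone, hence of the $\delta$-cone by Proposition~\ref{basic}(1); and since a c.c.p.\ pair sends $v_i\otimes w_i$ to $\varphi_k(v_i)\otimes\psi_l(w_i)$ of norm at most $\|v_i\|\|w_i\|$, the diagonal corners satisfy $|v_i\otimes w_i|_\delta\le\|v_i\|\|w_i\|<1$, whence $\|v\otimes w\|_\delta<1$.

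The genuine content is matrix regularity, and the crux is the identity $|u|_\delta=\|u\|_\delta$ for $u\in M_n(V\otimes_\delta W)_+$. The inequality $\|u\|_\delta\le|u|_\delta$ is immediate from $\begin{pmatrix}u&u\\u&u\end{pmatrix}=\begin{pmatrix}1\\1\end{pmatrix}u\begin{pmatrix}1&1\end{pmatrix}\in M_{2n}(V\otimes_\delta W)_+$, exactly as for $\Delta$. For the reverse inequality I would take any block $\begin{pmatrix}a&u\\u&a'\end{pmatrix}\in M_{2n}(V\otimes_\delta W)_+$, apply a c.c.p.\ pair, and invoke the elementary estimate $\|B\|\le\|A\|^{1/2}\|C\|^{1/2}\le\max\{\|A\|,\|C\|\}$ valid for any positive operator matrix $\begin{pmatrix}A&B\\B^*&C\end{pmatrix}\ge0$; taking the supremum over c.c.p.\ maps and then the infimum over blocks gives $|u|_\delta\le\|u\|_\delta$. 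With the equality in hand, condition~(1) is read straight off the definition: if $v=v^*$ and $-u\le v\le u$ then $\begin{pmatrix}u&v\\v&u\end{pmatrix}\in M_{2n}(V\otimes_\delta W)_+$, so $\|v\|_\delta\le|u|_\delta=\|u\|_\delta$. Condition~(2) follows by symmetrizing a near-optimal block for $\|v\|_\delta<1$: averaging $\begin{pmatrix}u&v\\v&u'\end{pmatrix}$ with its flip yields $w=\tfrac12(u+u')\in M_n(V\otimes_\delta W)_+$ with $-w\le v\le w$ and $\|w\|_\delta\le|w|_\delta<1$. I expect this equality $|u|_\delta=\|u\|_\delta$—specifically the positive-matrix norm estimate forcing $|u|_\delta\le\|u\|_\delta$—to be the only step not obtained by directly transcribing the proof of Theorem~\ref{Delta}.
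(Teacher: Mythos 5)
Your proof is correct, and for most of the theorem it follows the same route the paper takes: the paper proves only the direct-sum estimate explicitly (via the unitary shuffle and the bound $|u_1\oplus u_2|_\delta\le\max\{|u_1|_\delta,|u_2|_\delta\}$, exactly as you do) and then declares ``the remaining proof is similar to that of Theorem~\ref{Delta}'', which is what your transcriptions of the compression estimate and the subcross estimate (through Proposition~\ref{basic}(1)) amount to. Where you genuinely diverge is the matrix-regularity step. The paper, transcribing Theorem~\ref{Delta}, uses only the one-sided inequality $\|u\|_\delta\le|u|_\delta$ for $u$ in the cone and appeals to Schreiner's characterization \cite[Theorem 3.4]{S}; you instead prove the two-sided equality $|u|_\delta=\|u\|_\delta$, via the estimate $\|B\|\le\max\{\|A\|,\|C\|\}$ for positive blocks $\begin{pmatrix} A & B \\ B^* & C\end{pmatrix}$, and then verify conditions (1) and (2) of the definition of matrix regularity directly. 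That equality is precisely Lemma~\ref{positive}(1) of the paper, which the paper only establishes later (it is needed there for Theorem~\ref{duality2}); note that for $\otimes_\Delta$ the analogous equality is stated in the paper as an open question, so your argument is specific to the $\delta$ side, and it supplies exactly the ingredient (the reverse implication in Schreiner's criterion, equivalently condition (1) of the definition) that the paper's terser ``similar'' argument leaves implicit. So your route costs an inline proof of Lemma~\ref{positive}(1) but buys a self-contained check of both regularity axioms, and your closing remark correctly identifies this equality as the one step that cannot be obtained by pure transcription from Theorem~\ref{Delta}. One small detail you should spell out: the claim that $-u\le v\le u$ forces $\begin{pmatrix} u & v \\ v & u\end{pmatrix}\in M_{2n}(V\otimes_\delta W)_+$ uses the decomposition $\begin{pmatrix} u & v \\ v & u\end{pmatrix}=\tfrac12\begin{pmatrix}1\\1\end{pmatrix}(u+v)\begin{pmatrix}1&1\end{pmatrix}+\tfrac12\begin{pmatrix}1\\-1\end{pmatrix}(u-v)\begin{pmatrix}1&-1\end{pmatrix}$ together with the fact that the $\delta$-cone is stable under sums and congruences (and here it matters, as you note, that this cone is already closed, by part (3) of the proposition preceding Theorem~\ref{Delta}).
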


\begin{proof}
Let $z_i \in M_{n_i}(V \otimes W)$ ($i=1,2$). There exist $u_i, u_i' \in M_{n_i}(V \otimes W)$ such that $$\begin{pmatrix} u_i&z_i \\ z_i^*&u_i' \end{pmatrix} \in M_{2n_i}(V \otimes_\delta W)_+ \qquad \text{and} \qquad |u_i|_\delta, |u_i'|_{\delta} < \|z_i\|_{\delta}+\varepsilon \qquad (i=1,2).$$ For c.c.p. maps $\varphi : V \to M_k$ and $\psi : W \to M_l$, the matrix $$(\varphi \otimes \psi)_{2(n_1+n_2)}(\begin{pmatrix} u_1&0&z_1&0 \\ 0&u_2&0&z_2 \\ z_1^*&0&u_1'&0 \\ 0&z_2^*&0&u_2' \end{pmatrix})$$ is unitarily equivalent to $$\begin{pmatrix} (\varphi \otimes \psi)_{n_1} (u_1)&(\varphi \otimes \psi)_{n_1} (z_1)&0&0 \\ (\varphi \otimes \psi)_{n_1} (z_1^*)&(\varphi \otimes \psi)_{n_1} (u_1') &0&0\\ 0&0&(\varphi \otimes \psi)_{n_2} (u_2)&(\varphi \otimes \psi)_{n_2} (z_2) \\ 0&0&(\varphi \otimes \psi)_{n_2} (z_2^*)&(\varphi \otimes \psi)_{n_2} (u_2') \end{pmatrix} \in M_{2(n_1+n_2)kl}^+.$$ Because $$\begin{aligned} \|(\varphi \otimes \psi)_{n_1+n_2}(\begin{pmatrix} u_1&0 \\ 0&u_2\end{pmatrix})\| & =\|\begin{pmatrix} (\varphi \otimes \psi)_{n_1}(u_1) & 0 \\ 0 & (\varphi \otimes \psi)_{n_2}(u_2) \end{pmatrix} \| \\ & \le \max \{ |u_1|_{\delta}, |u_2|_{\delta} \}\\ & < \max \{ \|z_1\|_{\delta}, \|z_2\|_{\delta} \} +\varepsilon, \end{aligned}$$ we see that $$\|\begin{pmatrix} z_1&0 \\ 0&z_2 \end{pmatrix} \|_{\delta} \le \max \{ \|z_1\|_{\delta}, \|z_2\|_{\delta} \}.$$ The remaining proof is similar to that of Theorem \ref{Delta}.
\end{proof}

\begin{thm}
Suppose that $V, W$ and $V \otimes_{\alpha} W$ are matrix regular operator spaces.
\begin{enumerate}
\item If $\|\cdot\|_{\alpha}$ is a subcross matrix norm and $v \otimes w \in M_{kl}(V \otimes_{\alpha} W)^+$ for all $v \in M_k(V)^+, w \in M_l(W)^+$, then we have $$M_n(V \otimes_{\Delta} W)^+ \subset M_n(V \otimes_{\alpha} W)^+ \qquad \text{and} \qquad \|\cdot\|_{\alpha} \le \|\cdot\|_{\Delta}.$$
\item If $\varphi \otimes \psi : V \otimes_{\alpha} W \to M_{kl}$ is c.c.p. for all c.c.p. maps $\varphi : V \to M_k, \psi : W \to M_l$, then we have $$M_n(V \otimes_{\alpha} W)^+ \subset M_n(V \otimes_{\delta} W)^+ \qquad \text{and} \qquad \|\cdot\|_{\delta} \le \|\cdot\|_{\alpha}.$$
\end{enumerate}
\end{thm}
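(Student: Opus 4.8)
The plan is to handle the two parts in parallel: in each case I would first prove the inclusion of the \emph{generating} cones, and then deduce the norm inequality by feeding a nearly-optimal $2\times 2$ block into the matrix regularity formula
$$\|z\|_{M_n(V \otimes_\alpha W)} = \inf \left\{ \max\{\|a\|_\alpha, \|d\|_\alpha\} : \begin{pmatrix} a & z \\ z^* & d \end{pmatrix} \in M_{2n}(V \otimes_\alpha W)^+ \right\},$$
which is available precisely because $V \otimes_\alpha W$ is assumed matrix regular.

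For (1), I would first note that if $u = \alpha(v \otimes w)\alpha^* \in M_n(V \otimes_\Delta W)_+$ with $v \in M_k(V)^+$, $w \in M_l(W)^+$ and $\alpha \in M_{n,kl}$, then the hypothesis $v \otimes w \in M_{kl}(V \otimes_\alpha W)^+$ together with the matrix-ordering axiom $\beta^* M_{kl}(\cdot)^+ \beta \subset M_n(\cdot)^+$ (applied with $\beta = \alpha^*$) gives $u \in M_n(V \otimes_\alpha W)^+$; hence $M_n(V \otimes_\Delta W)_+ \subset M_n(V \otimes_\alpha W)^+$. The subcross hypothesis then yields $\|u\|_\alpha \le \|\alpha\|^2 \|v \otimes w\|_\alpha \le \|\alpha\|^2 \|v\|\|w\|$, and taking the infimum over decompositions of $u$ gives $\|u\|_\alpha \le |u|_\Delta$ for all $u \in M_n(V \otimes_\Delta W)_+$. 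Fixing $z$ and a block $\begin{pmatrix} u & z \\ z^* & u' \end{pmatrix} \in M_{2n}(V \otimes_\Delta W)_+$ that realizes $\|z\|_\Delta$ up to $\varepsilon$, the cone inclusion places this block in $M_{2n}(V \otimes_\alpha W)^+$, so the matrix regularity formula gives $\|z\|_\alpha \le \max\{\|u\|_\alpha, \|u'\|_\alpha\} \le \max\{|u|_\Delta, |u'|_\Delta\} < \|z\|_\Delta + \varepsilon$. Letting $\varepsilon \to 0$ proves $\|\cdot\|_\alpha \le \|\cdot\|_\Delta$, and the inclusion of the \emph{closed} cones then follows: a $\|\cdot\|_\Delta$-convergent sequence from $M_n(V \otimes_\Delta W)_+$ is $\|\cdot\|_\alpha$-convergent to the same limit, which lies in the $\|\cdot\|_\alpha$-closed set $M_n(V \otimes_\alpha W)^+$.

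For (2) I would argue dually. First, rescaling any continuous c.p.\ map by its (finite) cb-norm shows that the c.c.p.\ hypothesis on $\varphi \otimes \psi$ in fact forces $\varphi \otimes \psi$ to be completely positive for \emph{all} continuous c.p.\ $\varphi, \psi$; hence for $z \in M_n(V \otimes_\alpha W)^+$ and any such pair we get $(\varphi \otimes \psi)_n(z) \in M_{nkl}^+$, i.e.\ $z \in M_n(V \otimes_\delta W)_+$. Since the previously established identity $M_n(V \otimes_\delta W)^+ = M_n(V \otimes_\delta W)_+$ means the $\delta$-cone is already closed, this gives $M_n(V \otimes_\alpha W)^+ \subset M_n(V \otimes_\delta W)^+$ with no extra closure step. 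For the norm, complete contractivity of $\varphi \otimes \psi$ gives $\|(\varphi \otimes \psi)_n(a)\| \le \|a\|_\alpha$ for every c.c.p.\ pair, so $|a|_\delta \le \|a\|_\alpha$ for $a \in M_n(V \otimes_\delta W)_+$; feeding a block $\begin{pmatrix} a & z \\ z^* & d \end{pmatrix} \in M_{2n}(V \otimes_\alpha W)^+$ that nearly realizes $\|z\|_\alpha$ into the definition of $\|\cdot\|_\delta$ (its diagonal corners being $\delta$-positive by compression) yields $\|z\|_\delta \le \max\{|a|_\delta, |d|_\delta\} \le \max\{\|a\|_\alpha, \|d\|_\alpha\} < \|z\|_\alpha + \varepsilon$.

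I expect no deep obstacle; the real content is in correctly matching the two side conditions to the two defining quantities $|\cdot|_\Delta$ and $|\cdot|_\delta$. The two points requiring slight care are the reduction from arbitrary continuous c.p.\ test maps to c.c.p.\ ones in part (2), so that the c.c.p.\ hypothesis genuinely controls the entire $\delta$-cone, and the fact that in part (1) the norm inequality must be in hand before the inclusion of the \emph{closed} cones can be concluded, since that inclusion compares the $\Delta$- and $\alpha$-topologies.
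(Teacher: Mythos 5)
Your proof is correct and takes essentially the same route as the paper's: the generating-cone inclusions come from the matrix-ordering axiom together with the subcross/c.c.p.\ hypotheses, the key estimates $\|u\|_\alpha \le |u|_\Delta$ and $|u|_\delta \le \|u\|_\alpha$ are identical, and the norm inequalities follow by feeding near-optimal $2\times 2$ blocks into the matrix-regularity formula for $\|\cdot\|_\alpha$, with the closed-cone inclusion in (1) deduced from the norm comparison exactly as in the paper. The only difference is that you make explicit two points the paper leaves implicit (the rescaling from c.c.p.\ to continuous c.p.\ test maps in (2), and that the norm inequality must precede the closure argument in (1)), which is a harmless elaboration.
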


\begin{proof}
(1) Let $u = \alpha (v \otimes w) \alpha^* \in M_n(V \otimes_\Delta W)_+$ for $v \in M_k(V)^+, w \in M_l(W)^+, \alpha \in M_{n,kl}$. Since $V \otimes_\alpha W$ is matrix ordered and $\|\cdot\|_\alpha$ is a subcross matrix norm, it follows that $$u = \alpha (v \otimes w) \alpha^* \in M_n(V \otimes_\alpha W)^+ \quad \text{and} \quad \|u\|_\alpha \le \|\alpha\|^2 \|v\| \|w\|,$$ and hence, $\|u\|_\alpha \le |u|_\Delta$. For a general element $z$ in $M_n(V \otimes W)$, we have $$\begin{aligned} \|z\|_\alpha & = \inf \{ \max \{ \|u\|_\alpha, \|u'\|_\alpha \} : \begin{pmatrix} u&z \\ z^*&u' \end{pmatrix} \in M_{2n}(V \otimes_\alpha W)^+ \} \\ & \le \inf \{ \max \{ |u|_\Delta, |u'|_\Delta \} : \begin{pmatrix} u&z \\ z^*&u' \end{pmatrix} \in M_{2n}(V \otimes_\Delta W)_+ \} \\ & = \|z\|_\Delta. \end{aligned}$$ From $M_n(V \otimes_\Delta W)_+ \subset M_n(V \otimes_{\alpha} W)^+$ and $\|\cdot\|_{\alpha} \le \|\cdot\|_{\Delta}$, we see that $$M_n(V \otimes_{\Delta} W)^+ \subset M_n(V \otimes_{\alpha} W)^+.$$

(2) Let $u \in M_n(V \otimes_\alpha W)^+$. Since $(\varphi \otimes \psi)_n(u) \in M_{nkl}^+$ and $\|(\varphi \otimes \psi)_n(u)\| \le \|u\|_\alpha$, we see that $$M_n(V \otimes_\alpha W)^+ \subset M_n(V \otimes_\delta W)_+=M_n(V \otimes_\delta W)^+ \qquad \text{and} \qquad |u|_\delta \le \|u\|_\alpha.$$ For a general element $z \in M_n(V \otimes W)$, we have $$\begin{aligned} \|z\|_\delta & = \inf \{ \max \{ |u|_\delta, |u'|_\delta \} : \begin{pmatrix} u&z \\ z^*&u' \end{pmatrix} \in M_{2n}(V \otimes_\delta W)_+ \} \\ & \le \inf \{ \max \{ \|u\|_\alpha, \|u'\|_\alpha \} : \begin{pmatrix} u&z \\ z^*&u' \end{pmatrix} \in M_{2n}(V \otimes_\alpha W)^+ \} \\ & = \|z\|_\alpha. \end{aligned}$$
\end{proof}

\begin{prop} \label{linearization}
Suppose that $V, W$ and $Z$ are matrix regular operator spaces and $\Phi : V \times W \to Z$ is a completely bounded bilinear map. Then $\Phi : V \times W \to Z$ is completely positive if and only if its linearization $\tilde{\Phi} : V \otimes_{\Delta} W \to Z$ is completely positive. In this case, $\|\Phi\|_{cb}=\|\tilde{\Phi}\|_{cb}$.
\end{prop}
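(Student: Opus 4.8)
The plan is to reduce everything to the explicit generators of the cone $M_n(V \otimes_{\Delta} W)_+$, using two elementary identities for the linearization: the amplification rule $\tilde{\Phi}_n(\alpha X \alpha^*) = \alpha \tilde{\Phi}_m(X) \alpha^*$ for a scalar matrix $\alpha \in M_{n,m}$, and the compatibility $\tilde{\Phi}_{kl}(v \otimes w) = \Phi_{k,l}(v,w)$ for $v \in M_k(V)$, $w \in M_l(W)$, where $\Phi_{k,l}$ denotes the canonical amplification of the bilinear map. Here complete positivity of $\Phi$ means $\Phi_{k,l}(v,w) \in M_{kl}(Z)^+$ whenever $v \in M_k(V)^+$ and $w \in M_l(W)^+$, while $\|\Phi\|_{cb} = \sup_{k,l} \|\Phi_{k,l}\|$. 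With these conventions, the implication that $\tilde{\Phi}$ completely positive forces $\Phi$ completely positive is immediate: for $v \in M_k(V)^+$ and $w \in M_l(W)^+$, choosing $\alpha = I_{kl}$ in the definition of the generating cone shows $v \otimes w \in M_{kl}(V \otimes_{\Delta} W)^+$, so $\Phi_{k,l}(v,w) = \tilde{\Phi}_{kl}(v \otimes w) \in M_{kl}(Z)^+$.

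For the converse, assume $\Phi$ is completely positive. The algebraic core is that $\tilde{\Phi}_n$ sends the generating cone into $M_n(Z)^+$: any $z = \alpha (v \otimes w) \alpha^* \in M_n(V \otimes_{\Delta} W)_+$ satisfies $\tilde{\Phi}_n(z) = \alpha \Phi_{k,l}(v,w) \alpha^*$, which lies in $M_n(Z)^+$ by complete positivity of $\Phi$ together with axiom (3) of a matrix ordered space. To promote this from the generating cone to its norm-closure $M_n(V \otimes_{\Delta} W)^+$, I need $\tilde{\Phi}_n$ to be $\|\cdot\|_{\Delta}$-continuous; since $M_n(Z)^+$ is closed, continuity then yields $\tilde{\Phi}_n(M_n(V \otimes_{\Delta} W)^+) \subset M_n(Z)^+$, i.e.\ complete positivity of $\tilde{\Phi}$.

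This continuity, and simultaneously the nontrivial half $\|\tilde{\Phi}\|_{cb} \le \|\Phi\|_{cb}$ of the norm identity, comes from a $2 \times 2$ block estimate. Given $z$ with $\|z\|_{\Delta} < 1$, choose generators $u, u'$ with $\begin{pmatrix} u & z \\ z^* & u' \end{pmatrix} \in M_{2n}(V \otimes_{\Delta} W)_+$ and $|u|_{\Delta}, |u'|_{\Delta} < 1$. Applying $\tilde{\Phi}_{2n}$ and invoking the generating-cone step at level $2n$ gives $\begin{pmatrix} \tilde{\Phi}_n(u) & \tilde{\Phi}_n(z) \\ \tilde{\Phi}_n(z^*) & \tilde{\Phi}_n(u') \end{pmatrix} \in M_{2n}(Z)^+$, whence the matrix regularity of $Z$ (the infimum formula for its matrix norms) gives $\|\tilde{\Phi}_n(z)\| \le \max\{\|\tilde{\Phi}_n(u)\|, \|\tilde{\Phi}_n(u')\|\}$. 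Writing a generator as $u = \beta (v \otimes w) \beta^*$ one bounds $\|\tilde{\Phi}_n(u)\| = \|\beta \Phi_{k,l}(v,w) \beta^*\| \le \|\beta\|^2 \|\Phi\|_{cb} \|v\| \|w\|$, and minimizing over decompositions yields $\|\tilde{\Phi}_n(u)\| \le \|\Phi\|_{cb}\, |u|_{\Delta}$. Combining these gives $\|\tilde{\Phi}_n(z)\| \le \|\Phi\|_{cb}$, so $\|\tilde{\Phi}\|_{cb} \le \|\Phi\|_{cb}$ and $\tilde{\Phi}$ is bounded. The reverse inequality is immediate from $\Phi_{k,l}(v,w) = \tilde{\Phi}_{kl}(v \otimes w)$ and the subcross property $\|v \otimes w\|_{\Delta} \le \|v\| \|w\|$ of Theorem \ref{Delta}.

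The main obstacle is the closure step. Because $M_n(V \otimes_{\Delta} W)^+$ is defined as a norm-closure of the generating cone, mapping generators to positive elements does not by itself establish complete positivity of $\tilde{\Phi}$; one genuinely needs the a priori continuity bound. I would therefore take care to derive $\|\tilde{\Phi}\|_{cb} \le \|\Phi\|_{cb}$ from the block estimate first — noting that this estimate uses only the already-proved fact that generators map to positives, so there is no circularity — and only then appeal to the closedness of $M_n(Z)^+$ to pass to the closure.
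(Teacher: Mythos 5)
Your proposal is correct and follows essentially the same route as the paper's proof: the identity $\tilde{\Phi}_n(\alpha(v\otimes w)\alpha^*)=\alpha\Phi_{k,l}(v,w)\alpha^*$ on the generating cone, the $2\times 2$ block estimate via matrix regularity of $Z$ giving $\|\tilde{\Phi}\|_{cb}\le\|\Phi\|_{cb}$, the subcross property for the reverse inequality, and finally complete positivity of $\tilde{\Phi}$ on the closed cones from its continuity. Your explicit remark on the non-circularity of the closure step is exactly the content of the paper's closing sentence ``the complete positivity of $\tilde{\Phi}$ follows from its continuity,'' just spelled out more carefully.
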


\begin{proof}
Suppose that the bilinear map $\Phi : V \times W \to Z$ is completely positive. Let $u=\alpha (v \otimes w) \alpha^*$ for $v \in M_p(V)^+, w \in M_q(W)^+, \alpha \in M_{n,pq}$. Since $$\tilde{\Phi}_n (u) = \tilde{\Phi}_n(\alpha (v \otimes w) \alpha^*) = \alpha \Phi_{pq}(v,w)\alpha^* \in M_n(Z)^+,$$ $\tilde{\Phi}_n(M_n(V \otimes_\Delta W)_+) \subset Z^+$ and $\|\tilde{\Phi}_n(u)\| \le \|\Phi\|_{cb} |u|_{\Delta}$

Choose an element $z$ in $M_n(V \otimes_{\Delta} W)_{\|\cdot\|<1}$. There exist $u, u' \in M_n(V \otimes_{\Delta} W)_+$ such that $$|u|_{\Delta}, |u'|_{\Delta}<1 \qquad \text{and} \qquad \begin{pmatrix} u&z \\ z^*&u' \end{pmatrix} \in M_{2n}(V \otimes_{\Delta} W)_+.$$ Applying $\tilde{\Phi}_{2n}$ to the above matrix, we get $$\begin{pmatrix} \tilde{\Phi}_n(u) & \tilde{\Phi}_n(z) \\ \tilde{\Phi}_n(z^*) & \tilde{\Phi}_n(u') \end{pmatrix} = {\tilde \Phi}_{2n} (\begin{pmatrix} u&z \\ z^*&u' \end{pmatrix}) \in M_{2n}(Z)^+.$$ By the matrix regularity of $Z$, we have $$\|\tilde{\Phi}_n(z)\| \le \max \{ \|\tilde{\Phi}_n(u)\|, \|\tilde{\Phi}_n(u')\| \} \le \|\Phi\|_{cb} \max \{ |u|_{\Delta}, |u'|_{\Delta} \} < \|\Phi\|_{cb}.$$ Since $\|\cdot\|_{\Delta}$ is a subcross matrix norm, we see that $\|\Phi\|_{cb}=\|\tilde \Phi\|_{cb}$. The complete positivity of $\tilde \Phi$ follows from its continuity.
\end{proof}

\begin{prop}\label{cp maps}
Suppose that $V_1, V_2$ and $W$ are matrix regular operator spaces and $\Phi : V_1 \to V_2$ is completely positive.
\begin{enumerate}
\item The map $$\Phi \otimes {\rm id}_W : V_1 \otimes_{\Delta} W \to V_2 \otimes_{\Delta} W$$ is completely positive with  $\|\Phi \otimes {\rm id}_W : V_1 \otimes_{\Delta} W \to V_2 \otimes_{\Delta} W \|_{cb} \le \|\Phi\|_{cb}$.
\item The map $$\Phi \otimes {\rm id}_W : V_1 \otimes_{\delta} W \to V_2 \otimes_{\delta} W$$ is completely positive with  $\|\Phi \otimes {\rm id}_W : V_1 \otimes_{\delta} W \to V_2 \otimes_{\delta} W \|_{cb} \le \|\Phi\|_{cb}$.
\end{enumerate}
\end{prop}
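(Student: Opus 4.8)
The plan is to treat both tensor products in parallel, since in each case the argument splits into a computation on the generators of the positive cone followed by a routine passage to general elements via the defining ``sandwich'' formula for the matrix norm. The one identity driving everything is the compatibility of $\Phi \otimes \mathrm{id}_W$ with each construction. For the $\Delta$-product I would use that $(\Phi \otimes \mathrm{id}_W)_n(\alpha(v \otimes w)\alpha^*) = \alpha(\Phi_k(v) \otimes w)\alpha^*$ for scalar $\alpha$, which holds because $\Phi \otimes \mathrm{id}_W$ is applied entrywise and commutes with compression by scalar matrices. For the $\delta$-product I would use the dual identity $(\varphi \otimes \psi)_n \circ (\Phi \otimes \mathrm{id}_W)_n = ((\varphi \circ \Phi) \otimes \psi)_n$ for continuous c.p. maps $\varphi : V_2 \to M_k$ and $\psi : W \to M_l$. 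I would also record at the outset that a completely positive map between matrix regular operator spaces is self-adjoint (self-adjoint elements are differences of positives by condition (2) of matrix regularity), so that $(\Phi \otimes \mathrm{id}_W)_n(z^*) = ((\Phi \otimes \mathrm{id}_W)_n(z))^*$ throughout.

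For part (1), let $u = \alpha(v \otimes w)\alpha^* \in M_n(V_1 \otimes_\Delta W)_+$ with $v \in M_k(V_1)^+$, $w \in M_l(W)^+$, $\alpha \in M_{n,kl}$. By the first identity and complete positivity of $\Phi$, the image $(\Phi \otimes \mathrm{id}_W)_n(u) = \alpha(\Phi_k(v) \otimes w)\alpha^*$ is again of generating form with $\Phi_k(v) \in M_k(V_2)^+$, hence lies in $M_n(V_2 \otimes_\Delta W)_+$. Moreover $|(\Phi \otimes \mathrm{id}_W)_n(u)|_\Delta \le \|\alpha\|^2 \|\Phi_k(v)\| \|w\| \le \|\Phi\|_{cb}\,\|\alpha\|^2 \|v\|\|w\|$, using $\|\Phi_k(v)\| \le \|\Phi\|_{cb}\|v\|$; taking the infimum over representations of $u$ gives $|(\Phi \otimes \mathrm{id}_W)_n(u)|_\Delta \le \|\Phi\|_{cb}\,|u|_\Delta$.

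For part (2), let $z \in M_n(V_1 \otimes_\delta W)_+$ and pick continuous c.p. maps $\varphi : V_2 \to M_k$, $\psi : W \to M_l$. By the dual identity, $[(\varphi \otimes \psi)((\Phi \otimes \mathrm{id}_W)(z)_{ij})] = [((\varphi \circ \Phi) \otimes \psi)(z_{ij})]$, and $\varphi \circ \Phi : V_1 \to M_k$ is again continuous and completely positive; since $z \in M_n(V_1 \otimes_\delta W)_+$ this matrix is positive, so $(\Phi \otimes \mathrm{id}_W)_n(z) \in M_n(V_2 \otimes_\delta W)_+$. For the norm I take $\varphi,\psi$ c.c.p.; then $\|\varphi \circ \Phi\|_{cb} \le \|\varphi\|_{cb}\|\Phi\|_{cb} \le \|\Phi\|_{cb}$, so (assuming $\|\Phi\|_{cb}>0$, the other case being trivial) the rescaled map $\|\Phi\|_{cb}^{-1}(\varphi \circ \Phi)$ is c.c.p., whence $\|[((\varphi \circ \Phi) \otimes \psi)(u_{ij})]\| \le \|\Phi\|_{cb}\,|u|_\delta$ for $u \in M_n(V_1 \otimes_\delta W)_+$; taking the supremum gives $|(\Phi \otimes \mathrm{id}_W)_n(u)|_\delta \le \|\Phi\|_{cb}\,|u|_\delta$.

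In both cases I would finish identically. Given $z \in M_n(V_1 \otimes W)$, choose positive $u,u'$ with $\bigl(\begin{smallmatrix} u & z \\ z^* & u'\end{smallmatrix}\bigr)$ positive and $|u|,|u'| < \|z\| + \varepsilon$; applying $(\Phi \otimes \mathrm{id}_W)_{2n}$ and invoking the positivity just established, the resulting $2n \times 2n$ matrix is a legitimate sandwich for $(\Phi \otimes \mathrm{id}_W)_n(z)$, so by the definition of the $\Delta$- (resp. $\delta$-) norm together with the cone estimate above, $\|(\Phi \otimes \mathrm{id}_W)_n(z)\| \le \|\Phi\|_{cb}\max\{|u|,|u'|\} < \|\Phi\|_{cb}(\|z\| + \varepsilon)$; letting $\varepsilon \to 0$ yields the cb-norm bound at every level. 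Complete positivity then passes to the closed cones $M_n(V_2 \otimes_\Delta W)^+$ and $M_n(V_2 \otimes_\delta W)^+$ by continuity of the now bounded map. The only genuinely delicate points are the two compatibility identities and the rescaling trick turning the composite c.p. map $\varphi \circ \Phi$ into a c.c.p. map without losing more than a factor $\|\Phi\|_{cb}$; everything else is bookkeeping with the sandwich formula.
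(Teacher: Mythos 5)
Your proposal is correct and follows essentially the same route as the paper: the identity $(\Phi \otimes \mathrm{id}_W)_n(\alpha(v\otimes w)\alpha^*)=\alpha(\Phi_k(v)\otimes w)\alpha^*$ for the $\Delta$-cone, the rescaled composition $(\varphi\otimes\psi)_n\circ(\Phi\otimes\mathrm{id}_W)_n = \|\Phi\|_{cb}\,((\varphi\circ(\Phi/\|\Phi\|_{cb}))\otimes\psi)_n$ for the $\delta$-cone, then the sandwich formula for general elements and continuity to pass to the closed cones. Your explicit treatment of self-adjointness of $\Phi\otimes\mathrm{id}_W$ and of the degenerate case $\|\Phi\|_{cb}=0$ only makes explicit what the paper leaves implicit.
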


\begin{proof}
(1) Since $$(\Phi \otimes {\rm id}_W)_n (\alpha (v \otimes w) \alpha^*) = \alpha \Phi_k (v) \otimes w \alpha^*,$$ we have $(\Phi \otimes {\rm id}_W)_n (M_n(V_1 \otimes_\Delta W)_+) \subset M_n(V_2 \otimes_\Delta W)_+$ and $|(\Phi \otimes {\rm id}_W)_n (u)|_\Delta \le \|\Phi\|_{cb} |u|_\Delta$ for $u \in M_n(V_1 \otimes_\Delta W)_+$. Let $z \in M_n(V_1 \otimes W)$ with $\|z\|_\Delta <1$. There exist $u, u' \in M_n(V_1 \otimes_{\Delta} W)_+$ such that $$\begin{pmatrix} u &z \\ z^* & u' \end{pmatrix} \in M_{2n}(V_1 \otimes_\Delta W)_+ \qquad \text{and} \qquad |u|_{\Delta}, |u'|_{\Delta}<1.$$ The application of $(\Phi \otimes {\rm id}_W)_{2n}$ yields $$\begin{pmatrix} (\Phi \otimes {\rm id}_W)_n (u) &(\Phi \otimes {\rm id}_W)_n (z) \\ (\Phi \otimes {\rm id}_W)_n (z)^* & (\Phi \otimes {\rm id}_W)_n (u') \end{pmatrix} \in M_{2n}(V_2 \otimes_\Delta W)_+.$$ It follows that $$\|(\Phi \otimes {\rm id}_W)_n (z)\|_\Delta \le \max \{ |(\Phi \otimes {\rm id}_W)_n (u)|_\Delta, |(\Phi \otimes {\rm id}_W)_n (u')|_\Delta \} < \|\Phi\|_{cb}.$$ Complete positivity is obtained by the continuity.

(2) Let $\varphi : V_2 \to M_k, \psi : W \to M_l$ be c.c.p. maps and $u \in M_n(V_1 \otimes_{\delta} W)^+$. Since $$(\varphi \otimes \psi)_n \circ (\Phi \otimes {\rm id}_W)_n (u) = \|\Phi\|_{cb}(\varphi \circ (\Phi \slash \|\Phi\|_{cb}) \otimes \psi)_n (u),$$ $\Phi \otimes {\rm id}_W$ is completely positive and $|(\Phi \otimes {\rm id}_W)_n (u)|_\delta \le \|\Phi\|_{cb} |u|_\delta$ for $u \in M_n(V_1 \otimes_\delta W)_+$. The remaining proof is similar to (1).

\end{proof}

\section{Decomposable maps}

We define decomposable maps between matrix regular operator spaces and their decomposable norms in a similar manner as for those for $C^*$-algebras.

\begin{defn}
Suppose that $V$ and $W$ are matrix regular operator spaces
\begin{enumerate}
\item A linear map $T : V \to W$ is called decomposable if it is a linear combination of completely positive maps.
\item For a decomposable map $T : V \to W$, we define its decomposable norm $\|T\|_{dec}$ by $$\|T\|_{dec} = \inf \{ \max \{ \|S_1\|, \|S_2\|\} : \begin{pmatrix} S_1 & T \\ T^* & S_2 \end{pmatrix} \ge_{cp} 0 \}.$$
\end{enumerate}
\end{defn}

A linear map $T : V \to W$ is decomposable if and only if there exist two completely positive maps $S_1, S_2 : V \to W$ for which the linear map $$\Phi : V \to M_2(W) : v \mapsto \begin{pmatrix} S_1(v) & T(v) \\ T^*(v) & S_2(v) \end{pmatrix}$$ is completely positive, whose proof is same with \cite{H}. The space of decomposable maps from $V$ into $W$ equipped with decomposable norm $\|\cdot\|_{dec}$ is denoted by $D(V, W)$.

\begin{prop}
Suppose that $V_1, V_2$ and $W$ are matrix regular operator spaces and $\Phi : V_1 \to V_2$ is decomposable.
\begin{enumerate}
\item The map $$\Phi \otimes {\rm id}_W : V_1 \otimes_{\Delta} W \to V_2 \otimes_{\Delta} W$$ is decomposable with  $\|\Phi \otimes {\rm id}_W\|_{dec} \le \|\Phi\|_{dec}$.
\item The map $$\Phi \otimes {\rm id}_W : V_1 \otimes_{\delta} W \to V_2 \otimes_{\delta} W$$ is decomposable with  $\|\Phi \otimes {\rm id}_W \|_{dec} \le \|\Phi\|_{dec}$.
\end{enumerate}
\end{prop}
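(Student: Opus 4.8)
The plan is to reduce both statements to the characterization of decomposability by a single completely positive $2\times2$ amplification and then push that amplification through Proposition~\ref{cp maps}. Fix $\varepsilon>0$ and, using that $\Phi$ is decomposable, choose completely positive maps $S_1,S_2:V_1\to V_2$ with $\max\{\|S_1\|,\|S_2\|\}<\|\Phi\|_{dec}+\varepsilon$ for which $$\tilde\Phi:V_1\to M_2(V_2),\qquad \tilde\Phi(v)=\begin{pmatrix} S_1(v) & \Phi(v) \\ \Phi^*(v) & S_2(v)\end{pmatrix},$$ is completely positive. Since $M_2(V_2)$ is again a matrix regular operator space, Proposition~\ref{cp maps} applies to $\tilde\Phi$. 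The maps that will witness the decomposability of $\Phi\otimes{\rm id}_W$ are $S_1\otimes{\rm id}_W$ and $S_2\otimes{\rm id}_W$, which are completely positive by Proposition~\ref{cp maps}; note also that $(\Phi\otimes{\rm id}_W)^*=\Phi^*\otimes{\rm id}_W$, as one checks on elementary tensors from $T^*(z)=T(z^*)^*$.

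The heart of the argument is to show that $$\Psi:z\mapsto \begin{pmatrix} (S_1\otimes{\rm id}_W)(z) & (\Phi\otimes{\rm id}_W)(z) \\ (\Phi^*\otimes{\rm id}_W)(z) & (S_2\otimes{\rm id}_W)(z)\end{pmatrix}$$ is completely positive from $V_1\otimes_\Delta W$ into $M_2(V_2\otimes_\Delta W)$, and likewise for $\otimes_\delta$. Conceptually, under the canonical shuffle $M_2(V_2\otimes_\Delta W)\cong M_2(V_2)\otimes_\Delta W$ the map $\Psi$ is nothing but $\tilde\Phi\otimes{\rm id}_W$, which Proposition~\ref{cp maps}(1) already declares completely positive; the same remark with Proposition~\ref{cp maps}(2) covers $\otimes_\delta$. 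Thus the decomposability of $\Phi\otimes{\rm id}_W$ follows once this identification is made precise.

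To handle the shuffle rigorously I would verify the complete positivity of $\Psi$ directly on the respective descriptions of the cones, extending to the closed cones by continuity. For $\otimes_\Delta$, a typical positive element is a generator $\alpha(v\otimes w)\alpha^*$ with $v\in M_p(V_1)^+$, $w\in M_q(W)^+$, $\alpha\in M_{n,pq}$; using $(\Phi\otimes{\rm id}_W)_n(\alpha(v\otimes w)\alpha^*)=\alpha(\Phi_p(v)\otimes w)\alpha^*$ and the analogous identities for $S_1,S_2,\Phi^*$, one finds $\Psi_n(\alpha(v\otimes w)\alpha^*)=\hat\alpha\,(\tilde\Phi_p(v)\otimes w)\,\hat\alpha^*$ with $\hat\alpha=\mathrm{diag}(\alpha,\alpha)$, once the natural permutation identifying $M_2(M_p(V_2)\otimes M_q(W))$ with $M_{2p}(V_2)\otimes M_q(W)$ is inserted. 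Since $\tilde\Phi$ is completely positive we have $\tilde\Phi_p(v)\in M_{2p}(V_2)^+$, so $\tilde\Phi_p(v)\otimes w$ lies in the generating cone $M_{2pq}(V_2\otimes_\Delta W)_+$, and the congruences by the permutation and by $\hat\alpha$ keep it positive by axiom~(3) of matrix ordered spaces; hence $\Psi_n(\alpha(v\otimes w)\alpha^*)\in M_{2n}(V_2\otimes_\Delta W)_+$. For $\otimes_\delta$, fix continuous completely positive $\varphi:V_2\to M_k$, $\psi:W\to M_l$ and $z\in M_n(V_1\otimes_\delta W)_+$; since $(\varphi\otimes\psi)_n\circ(\Phi\otimes{\rm id}_W)_n=((\varphi\circ\Phi)\otimes\psi)_n$ and similarly for the other three blocks, the matrix $(\varphi\otimes\psi)_{2n}(\Psi_n(z))$ is a permutation of $((\varphi_2\circ\tilde\Phi)\otimes\psi)_n(z)$, where $\varphi_2:M_2(V_2)\to M_{2k}$ is the amplification of $\varphi$. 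As $\varphi_2\circ\tilde\Phi:V_1\to M_{2k}$ is a composition of completely positive maps, it is completely positive, so the very definition of $M_n(V_1\otimes_\delta W)_+$ forces $((\varphi_2\circ\tilde\Phi)\otimes\psi)_n(z)\ge 0$; as $\varphi,\psi$ were arbitrary, $\Psi_n(z)\in M_{2n}(V_2\otimes_\delta W)_+$.

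In both cases $S_1\otimes{\rm id}_W$ and $S_2\otimes{\rm id}_W$ are the required completely positive maps, and Proposition~\ref{cp maps} bounds their norms by $\|S_1\|$ and $\|S_2\|$ respectively. Consequently $$\|\Phi\otimes{\rm id}_W\|_{dec}\le\max\{\|S_1\otimes{\rm id}_W\|,\|S_2\otimes{\rm id}_W\|\}\le\max\{\|S_1\|,\|S_2\|\}<\|\Phi\|_{dec}+\varepsilon,$$ and letting $\varepsilon\to0$ gives the claim. I expect the only genuinely delicate point to be the index bookkeeping behind the shuffle $M_2(V\otimes_\Delta W)\cong M_2(V)\otimes_\Delta W$ and its $\delta$-analogue: one must check that the permutation reconciling the outer $2\times2$ amplification with the $W$-tensor factor is a congruence, so that it preserves the positive cones by axiom~(3). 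This is routine but is where all the care is needed (together with the fact that a completely positive map between matrix regular operator spaces has norm equal to its completely bounded norm, so that Proposition~\ref{cp maps} controls exactly the quantity entering $\|\cdot\|_{dec}$); everything else is a direct appeal to Proposition~\ref{cp maps} and the definitions of the two cones.
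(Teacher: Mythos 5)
Your proposal is correct and follows essentially the same route as the paper's own proof: the same $2\times 2$ completely positive witness $\begin{pmatrix} S_1 & \Phi \\ \Phi^* & S_2 \end{pmatrix}$, the same canonical-shuffle congruence computation on generators $\alpha(v\otimes w)\alpha^*$ for the $\otimes_\Delta$ case, the same observation that $(\varphi\otimes\psi)_{2n}\circ\Psi_n$ is (up to shuffle) $((\varphi_2\circ\tilde\Phi)\otimes\psi)_n$ for the $\otimes_\delta$ case, and the same appeal to Proposition~\ref{cp maps} for the norm estimate. The points you flag as delicate (the shuffle being a congruence, and norm equal to cb-norm for c.p.\ maps) are exactly the ones the paper also relies on implicitly.
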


\begin{proof}
(1) There exist completely positive maps $\Psi : V_1 \to V_2$ and $\Psi' : V_1 \to V_2$ such that the map $$\begin{pmatrix} \Psi & \Phi \\ \Phi^* & \Psi' \end{pmatrix} : V_1 \to M_2(V_2)$$ is completely positive and $\|\Psi\|_{cb}, \|\Psi'\|_{cb} < \|\Phi\|_{dec}+\varepsilon$. Let $v \in M_k(V_1)^+, w \in M_l(W)^+$ and $\alpha \in M_{n,kl}$. By the canonical shuffle \cite{Pa2}, the matrix $(\begin{pmatrix} \Psi \otimes {\rm id}_W &\Phi \otimes {\rm id}_W \\ \Phi^* \otimes {\rm id}_W &\Psi' \otimes {\rm id}_W \end{pmatrix})_n(\alpha (v \otimes w) \alpha^*)$ is unitarily equivalent to $$\begin{aligned} & \begin{pmatrix} \alpha \Psi_k(v) \otimes w \alpha^* & \alpha \Phi_k (v) \otimes w \alpha^* \\ \alpha \Phi^*_k(v) \otimes w \alpha^* & \alpha \Psi'_k (v) \otimes w \alpha^* \end{pmatrix} \\ = & \begin{pmatrix} \alpha & 0 \\ 0 & \alpha \end{pmatrix} (\begin{pmatrix} \Psi_k(v) & \Phi_k(v) \\ \Phi^*_k(v) & \Psi'_k(v) \end{pmatrix} \otimes w) \begin{pmatrix} \alpha^* & 0 \\ 0 & \alpha^* \end{pmatrix} \in M_{2n}(V_2 \otimes_{\Delta} W)^+. \end{aligned}$$ Therefore, the map $$\begin{pmatrix} \Psi \otimes {\rm id}_W & \Phi \otimes {\rm id}_W \\ (\Phi \otimes {\rm id}_W)^* & \Psi' \otimes {\rm id}_W \end{pmatrix} : V_1 \otimes_{\Delta} W \to M_2(V_2 \otimes_{\Delta} W)$$ is completely positive and $$\|\Phi \otimes {\rm id}_W \|_{dec} \le \max \{ \|\Psi \otimes {\rm id}_W\|_{cb}, \|\Psi' \otimes {\rm id}_W\|_{cb}\} \le \max \{ \|\Psi\|_{cb}, \|\Psi'\|_{cb} \} < \|\Phi\|_{dec}+\varepsilon$$ by Proposition \ref{cp maps} (1).

(2) Let $\varphi : V_2 \to M_k, \psi : W \to M_l$ be c.c.p. maps and $u \in M_n(V_1 \otimes_{\delta} W)^+$. Since $$(\varphi \otimes \psi)_{2n}((\begin{pmatrix} \Psi \otimes {\rm id}_W &\Phi \otimes {\rm id}_W \\ \Phi^* \otimes {\rm id}_W &\Psi' \otimes {\rm id}_W \end{pmatrix})_n(u)) = (\begin{pmatrix} \varphi \circ \Psi & \varphi \circ \Phi \\ \varphi \circ \Phi^* & \varphi \circ \Psi' \end{pmatrix} \otimes \psi)_n(u) \in M_{2nkl}^+,$$ the map $$\begin{pmatrix} \Psi \otimes {\rm id}_W & \Phi \otimes {\rm id}_W \\ (\Phi \otimes {\rm id}_W)^* & \Psi' \otimes {\rm id}_W \end{pmatrix} : V_1 \otimes_{\delta} W \to M_2(V_2 \otimes_{\delta} W)$$ is completely positive and $$\|\Phi \otimes {\rm id}_W \|_{dec} \le \max \{ \|\Psi \otimes {\rm id}_W\|_{cb}, \|\Psi' \otimes {\rm id}_W\|_{cb}\} \le \max \{ \|\Psi\|_{cb}, \|\Psi'\|_{cb} \} < \|\Phi\|_{dec}+\varepsilon$$
by Proposition \ref{cp maps} (2).
\end{proof}

As done for the operator space structure on the space of completely bounded maps and \cite[Section 6]{LMM}, we define the matrix norms and positive cones on each matrix level of $D(V,W)$ by $$M_n(D(V,W)=D(V,M_n(W)) \qquad \text{and} \qquad M_n(D(V,W))^+ = CP(V,M_n(W)),$$ respectively.

\begin{thm}\label{duality1}
Suppose that $V$ and $W$ are matrix regular operator spaces. The canonical map $$\Phi : (V \otimes_{\Delta} W)^* \to D(V,W^*)$$ is a completely isometric and completely order isomorphic isomorphism.
\end{thm}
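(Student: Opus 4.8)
The plan is to realize $\Phi$ through the standard linearization–currying dictionary and then import positivity and norm information from the two structural facts already in hand: the linearization Proposition~\ref{linearization} and the matrix regularity of both the domain and the target. First I would pin down the map itself. A functional $f\in(V\otimes_\Delta W)^*$ is the same datum as a completely bounded functional $\hat f\colon V\otimes_\Delta W\to\mathbb C$, and at matrix levels $M_n((V\otimes_\Delta W)^*)=CB(V\otimes_\Delta W,M_n)$ carries the operator space dual norm and cone $CB\cap CP$. Composing the universal bilinear map $V\times W\to V\otimes_\Delta W$ with $\hat f$ produces a bilinear map $B_f\colon V\times W\to M_n$, and currying, together with the completely isometric identification $M_n(W^*)=CB(W,M_n)$, turns $B_f$ into a linear map $T_f\colon V\to M_n(W^*)$ given by $\langle T_f(v),w\rangle=f(v\otimes w)$. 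This assignment $f\mapsto T_f$ is the candidate $\Phi_n$; its linearity and injectivity are immediate, and surjectivity onto the decomposable maps will fall out of the order step below. So the real work is to identify matrix positivity and the two matrix norms on each side.

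For the complete order isomorphism I would show that $f\in M_n((V\otimes_\Delta W)^*)^+$, i.e. that $\hat f$ is completely positive, is equivalent to $T_f\in CP(V,M_n(W^*))=M_n(D(V,W^*))^+$. By Proposition~\ref{linearization}, $\hat f$ is completely positive exactly when the bilinear map $B_f$ is completely positive, meaning $(B_f)_{p,q}(M_p(V)^+\times M_q(W)^+)\subseteq M_{pqn}^+$ for all $p,q$. On the other side, $T_f$ is completely positive iff for every $v\in M_p(V)^+$ the element $(T_f)_p(v)\in M_{pn}(W^*)$ lies in the dual cone $M_{pn}(W^*)^+=CP(W,M_{pn})\cap CB(W,M_{pn})$, that is, the map $w\mapsto[B_f(v_{ij},w)]$ is completely positive. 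A canonical shuffle shows these two conditions coincide after reindexing, so $\Phi_n$ is a complete order isomorphism. This step simultaneously supplies surjectivity onto decomposable maps: by matrix regularity of the dual every $f$ is dominated by positive functionals, which correspond to completely positive maps, forcing $T_f$ to be decomposable; conversely a $2\times2$ completely positive matrix of maps provides a positive functional at level $2n$ whose off-diagonal corner is the required preimage.

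Finally, for the complete isometry I would transport the regularity characterization. Since $(V\otimes_\Delta W)^*$ is matrix regular, $\|f\|_{M_n}<1$ holds iff there are $g,h\in M_n((V\otimes_\Delta W)^*)^+$ with $\|g\|,\|h\|<1$ and $\begin{pmatrix} g & f \\ f^* & h\end{pmatrix}\in M_{2n}((V\otimes_\Delta W)^*)^+$. Applying the order isomorphism just established, $g,h$ correspond to completely positive $S_1,S_2\colon V\to M_n(W^*)$ and the $2\times2$ positivity becomes $\begin{pmatrix} S_1 & T_f \\ T_f^* & S_2\end{pmatrix}\ge_{cp}0$, which is precisely the constraint in the definition of $\|T_f\|_{dec}$. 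To match the scalar bounds I would invoke the norm equality $\|\hat g\|_{cb}=\|B_g\|_{cb}$ of Proposition~\ref{linearization} on the positive pieces, combined with the currying identification $CB(V,M_n(W^*))\cong CB(V,CB(W,M_n))$, to conclude $\|g\|_{M_n}=\|S_1\|$ and $\|h\|_{M_n}=\|S_2\|$. Since the two families of admissible dominating data then coincide, taking infima over them yields $\|f\|_{M_n}=\|T_f\|_{dec}$ at every level $n$, i.e. a complete isometry.

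The main obstacle I expect is this last norm-matching. One must verify that the scalar $\|S_i\|$ entering the decomposable norm is exactly the completely bounded norm transported from the dual cone, and that the bilinear completely bounded norm appearing in Proposition~\ref{linearization} is carried by currying onto the norm of $M_n(W^*)$ without any distortion. This requires careful bookkeeping of the canonical shuffles relating the amplifications $(B_g)_{p,q}$, the map $w\mapsto[B_g(v_{ij},w)]$, and the matrix arrangement defining $M_{pn}(W^*)$. Once these identifications are checked to be completely isometric and cone-preserving, the infima on the two sides are literally taken over identical data, and the desired equalities of norms and cones are forced.
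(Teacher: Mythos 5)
Your proposal is correct and follows essentially the same route as the paper: the currying identity $((\Phi_n(\varphi))_k(v))_l(w)=\varphi_{kl}(v\otimes w)$ together with Proposition~\ref{linearization} gives the cone correspondence and the norm equality on positive elements, and then matrix regularity of the dual space $(V\otimes_\Delta W)^*$ supplies the $2\times 2$ positive dominations whose images under $\Phi_{2n}$ are exactly the admissible data in the definition of $\|\cdot\|_{dec}$, so the two infima coincide. The bookkeeping you flag as the main obstacle (bilinear cb norm versus the norm of $CB(V,M_n(W^*))$ under the canonical shuffle) is precisely what the paper absorbs into Proposition~\ref{linearization} and the stated identity, so no new idea is needed.
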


\begin{proof}
Let $\varphi \in M_n((V \otimes_\Delta W)^*)=CB(V \otimes_\Delta W, M_n)$. Since $$((\Phi_n (\varphi)_k(v))_l(w) = \varphi_{kl}(v \otimes w), \qquad v \in M_k(V), w \in M_l(W),$$ $\varphi : V \otimes_\Delta W \to M_n$ is completely positive if and only if $\Phi_n (\varphi) : V \to M_n(W^*)$ is completely positive, in this case, $\|\varphi\|_{cb} = \|\Phi_n (\varphi)\|_{cb}$ by Proposition \ref{linearization}.

Next, let $\varphi \in M_n((V \otimes_\Delta W)^*)_1$. Since the dual spaces of matrix regular operator spaces are matrix regular \cite[Corollary 4.7]{S}, there exist $\psi_1, \psi_2 \in M_n((V \otimes_{\Delta} W)^*)^+_1$ such that $$\begin{pmatrix} \psi_1 & \varphi \\ \varphi^* & \psi_2 \end{pmatrix} \in M_{2n}((V \otimes_{\Delta} W)^*)^+.$$ Applying $\Phi_{2n}$, we get $$\begin{pmatrix} \Phi_n(\psi_1) & \Phi_n(\varphi) \\ \Phi_n(\varphi)^* & \Phi_n(\psi_2) \end{pmatrix} \in CP(V, M_{2n}(W^*)) \quad \text{and} \quad \Phi_n(\psi_i) \in CCP(V,M_n(W^*)) \quad (i=1,2).$$ Hence, $\|\Phi_n(\varphi)\|_{dec} \le 1$. The converse is merely the reverse of the above argument.
\end{proof}

Let $\mathcal A$ be a $C^*$-algebra and $\mathcal R$ a von Neumann algebra. Recall that the normed space dual of the projective tensor product $\mathcal A \otimes_\gamma \mathcal R_*$ is the space $B(\mathcal A, \mathcal R)$ of bounded linear maps and the operator space dual of the operator space projective tensor product $\mathcal A \otimes_\wedge \mathcal R_*$ is the space $CB(\mathcal A, \mathcal R)$ of completely bounded linear maps. Now, we see that the matrix regular operator space dual of $\mathcal A \otimes_\Delta \mathcal R_*$ is the space $D(\mathcal A, \mathcal R)$ of decomposable maps.

\begin{cor}
Let $\mathcal A$ be a $C^*$-algebra and $\mathcal R$ a von Neumann algebra. Then the matrix regular operator space $\mathcal A \otimes_{\Delta} \mathcal R_*$ is the predual of space ${\mathcal D}({\mathcal A},{\mathcal R})$ of Haagerup's decomposable maps from $\mathcal A$ into $\mathcal R$.
\end{cor}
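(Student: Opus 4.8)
The plan is to derive the corollary from Theorem~\ref{duality1} by specializing $V = \mathcal A$ and $W = \mathcal R_*$ and then dualizing. Theorem~\ref{duality1} gives a completely isometric, completely order isomorphic identification $(\mathcal A \otimes_\Delta \mathcal R_*)^* \cong D(\mathcal A, (\mathcal R_*)^*)$. Since $\mathcal R$ is a von Neumann algebra, the canonical identification $(\mathcal R_*)^* = \mathcal R$ holds as a $C^*$-algebra, hence as a matrix regular operator space, so the right-hand side becomes $D(\mathcal A, \mathcal R)$. First I would verify that $\mathcal A$ and $\mathcal R_*$ are indeed matrix regular operator spaces, which is recorded in the Preliminaries (the class of matrix regular operator spaces contains $C^*$-algebras and preduals of von Neumann algebras). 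This legitimizes applying Theorem~\ref{duality1} to this pair.

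The substance, then, is to pass from ``dual'' to ``predual.'' I would argue that the identification $(\mathcal A \otimes_\Delta \mathcal R_*)^* \cong D(\mathcal A, \mathcal R)$ exhibits $\mathcal A \otimes_\Delta \mathcal R_*$ as a predual of $D(\mathcal A, \mathcal R)$, in the sense that the weak-$*$ topology on $D(\mathcal A, \mathcal R)$ induced by $\mathcal A \otimes_\Delta \mathcal R_*$ is the appropriate one, and the matrix norms and matrix cones of $\mathcal A \otimes_\Delta \mathcal R_*$ recover those of $D(\mathcal A, \mathcal R)$ upon taking the dual matrix regular structure. Concretely, one dualizes the isomorphism $\Phi$ of Theorem~\ref{duality1}: the adjoint of a completely isometric complete order isomorphism is again one, so the inclusion $\mathcal A \otimes_\Delta \mathcal R_* \hookrightarrow (\mathcal A \otimes_\Delta \mathcal R_*)^{**} \cong D(\mathcal A, \mathcal R)^*$ exhibits $\mathcal A \otimes_\Delta \mathcal R_*$ inside the dual of $D(\mathcal A, \mathcal R)$, and the content of being ``the predual'' is precisely the identity $(\mathcal A \otimes_\Delta \mathcal R_*)^* = D(\mathcal A, \mathcal R)$ already furnished by the theorem.

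The main obstacle I anticipate is bookkeeping at the level of the matrix regular structure rather than any genuine difficulty: one must check that the duality $(\mathcal R_*)^* = \mathcal R$ is compatible not just isometrically but as matrix regular operator spaces, i.e.\ that the canonical operator-space predual structure on $\mathcal R_*$ dualizes back to the given $C^*$-matrix-cone and matrix-norm structure on $\mathcal R$. This is where the hypothesis that $\mathcal R$ is a von Neumann algebra (and not merely a $C^*$-algebra) is used: it guarantees $\mathcal R = (\mathcal R_*)^*$ with all the matrix regular data intact, so that $D(\mathcal A, (\mathcal R_*)^*)$ and $D(\mathcal A, \mathcal R)$ coincide as matrix regular operator spaces. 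Once this identification is in place, the corollary is an immediate reading of Theorem~\ref{duality1}, and no further estimate is required.
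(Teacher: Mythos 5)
Your proposal is correct and is essentially the paper's own argument: the corollary is an immediate specialization of Theorem~\ref{duality1} with $V=\mathcal A$ and $W=\mathcal R_*$ (both matrix regular, as noted in the Preliminaries), combined with the canonical identification $(\mathcal R_*)^*=\mathcal R$. The paper states the corollary without further proof for exactly this reason, so your extra care about the matrix regular structure of $(\mathcal R_*)^*$ is a reasonable bit of bookkeeping but not a departure from the paper's route.
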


\begin{lem}\label{positive}
Suppose that $z$ is a positive element in $M_n(V \otimes_\delta W)$. Then we have $$|z|_{\delta}=\|z\|_\delta = \|z\|_\vee.$$
\end{lem}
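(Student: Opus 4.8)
The plan is to establish the two equalities separately: $|z|_\delta=\|z\|_\delta$ by elementary manipulation of the defining infimum, and $|z|_\delta=\|z\|_\vee$ by a completion-and-compression argument, the latter carrying all the weight. Throughout I would use the standard description of the spatial norm, $\|z\|_\vee=\|z\|_{M_n(V\widecheck\otimes W)}=\sup\|(\varphi\otimes\psi)_n(z)\|$ with the supremum over all complete contractions $\varphi:V\to M_k$, $\psi:W\to M_l$, and the fact that $M_n(V\otimes_\delta W)_+$ is a matrix ordered cone, so that $(\varphi\otimes\psi)_m(\beta z\beta^*)=\beta\,(\varphi\otimes\psi)_n(z)\,\beta^*$ for scalar rectangular $\beta$.

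For $|z|_\delta=\|z\|_\delta$ I would prove both inequalities. Since $z$ is positive it is self-adjoint, so with $\beta=\begin{pmatrix}1&1\end{pmatrix}$ one has $\begin{pmatrix}z&z\\z^*&z\end{pmatrix}=\beta^*z\beta\in M_{2n}(V\otimes_\delta W)_+$; feeding this completion into the infimum defining $\|z\|_\delta$ gives $\|z\|_\delta\le\max\{|z|_\delta,|z|_\delta\}=|z|_\delta$. Conversely, for any completion $\begin{pmatrix}u&z\\z^*&u'\end{pmatrix}\in M_{2n}(V\otimes_\delta W)_+$ and any c.c.p.\ maps $\varphi,\psi$, applying $\varphi\otimes\psi$ produces a positive $2\times2$ block matrix over $M_{nkl}$ with off-diagonal corner $(\varphi\otimes\psi)_n(z)$; the standard estimate $\|B\|\le\max\{\|A\|,\|C\|\}$ for positive $\begin{pmatrix}A&B\\B^*&C\end{pmatrix}$ then yields $\|(\varphi\otimes\psi)_n(z)\|\le\max\{|u|_\delta,|u'|_\delta\}$. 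Taking the supremum over $\varphi,\psi$ and then the infimum over completions gives $|z|_\delta\le\|z\|_\delta$.

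The inequality $|z|_\delta\le\|z\|_\vee$ is immediate, since c.c.p.\ maps are complete contractions and so the supremum defining $|z|_\delta$ runs over a subset of that defining $\|z\|_\vee$. For the reverse inequality $\|z\|_\vee\le|z|_\delta$, given complete contractions $\varphi:V\to M_k$ and $\psi:W\to M_l$ (which I may take strictly contractive after rescaling by $1-\varepsilon$ and letting $\varepsilon\to0$), I would invoke the matrix regularity of the duals $V^*,W^*$ \cite{S} and its $2\times2$ characterization to complete them to completely positive maps
$$\Phi=\begin{pmatrix}a&\varphi\\\varphi^*&d\end{pmatrix}:V\to M_{2k},\qquad\Psi=\begin{pmatrix}e&\psi\\\psi^*&f\end{pmatrix}:W\to M_{2l},$$
with $a,d:V\to M_k$ and $e,f:W\to M_l$ all c.c.p. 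Since $z\in M_n(V\otimes_\delta W)_+$ and $\Phi,\Psi$ are continuous and completely positive, $(\Phi\otimes\Psi)_n(z)\in M_{4nkl}^+$. Viewing $M_{2k}=M_2\otimes M_k$ and $M_{2l}=M_2\otimes M_l$, let $R_1,R_2$ be the contractions that select, in both $M_2$-factors simultaneously, the first (resp.\ second) coordinate; then $R_s(\Phi\otimes\Psi)_n(z)R_t^*$ equals $(a\otimes e)_n(z)$, $(\varphi\otimes\psi)_n(z)$, $(d\otimes f)_n(z)$ for $(s,t)=(1,1),(1,2),(2,2)$, while $(2,1)$ gives $(\varphi^*\otimes\psi^*)_n(z)=[(\varphi\otimes\psi)_n(z)]^*$ because $z=z^*$. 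Compressing $(\Phi\otimes\Psi)_n(z)$ by $\begin{pmatrix}R_1\\R_2\end{pmatrix}$ therefore yields
$$\begin{pmatrix}(a\otimes e)_n(z)&(\varphi\otimes\psi)_n(z)\\{}[(\varphi\otimes\psi)_n(z)]^*&(d\otimes f)_n(z)\end{pmatrix}\ge0,$$
so that $\|(\varphi\otimes\psi)_n(z)\|\le\max\{\|(a\otimes e)_n(z)\|,\|(d\otimes f)_n(z)\|\}\le|z|_\delta$, the last inequality because $a,e,d,f$ are c.c.p. Taking the supremum over $\varphi,\psi$ gives $\|z\|_\vee\le|z|_\delta$, closing the chain $\|z\|_\delta=|z|_\delta=\|z\|_\vee$.

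I expect the main obstacle to be arranging the completion so that the dominating terms are genuinely images of $z$ under c.c.p.\ maps. The naive route of bounding $\|\Phi\|_{cb}$ and applying the definition of $|z|_\delta$ loses a factor of two, since in cb-norm a positive $2\times2$ block is only controlled by twice the maximum of its diagonal. The essential trick is to never leave the block picture: one keeps $\Phi,\Psi$ as $2\times2$ arrays, applies them to the positive element $z$, and extracts $(\varphi\otimes\psi)_n(z)$ as an off-diagonal corner of the resulting positive matrix, whose diagonal corners $(a\otimes e)_n(z)$ and $(d\otimes f)_n(z)$ are bounded by $|z|_\delta$ exactly. The only bookkeeping requiring care is identifying which corner of $M_{2k}\otimes M_{2l}$ carries $\varphi\otimes\psi$ and verifying that the opposite corner is its adjoint, which is where $z=z^*$ enters.
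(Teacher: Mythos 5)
Your proof is correct and takes essentially the same route as the paper: the same rank-one completion $\begin{pmatrix} z&z\\ z&z\end{pmatrix}=\begin{pmatrix}1\\1\end{pmatrix}z\begin{pmatrix}1&1\end{pmatrix}$ plus the positive $2\times 2$-block norm estimate give $|z|_\delta=\|z\|_\delta$, and the same device of completing the contractions $\varphi,\psi$ to completely positive block maps via matrix regularity of the duals $V^*,W^*$, applying them to $z$, and compressing gives $\|z\|_\vee\le|z|_\delta$. The paper's scalar compression $\begin{pmatrix}1&0&0&0\\0&0&0&1\end{pmatrix}$ is exactly your $\begin{pmatrix}R_1\\ R_2\end{pmatrix}$, so the two arguments coincide up to notation (your explicit $1-\varepsilon$ rescaling is a slightly more careful handling of the open-ball hypothesis in matrix regularity).
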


\begin{proof}
(1) Since $$\begin{pmatrix} z&z \\ z&z \end{pmatrix} = \begin{pmatrix} 1 \\ 1 \end{pmatrix} z \begin{pmatrix} 1&1 \end{pmatrix} \in M_{2n}(V \otimes_\delta W)_+,$$ we have $\|z\|_{\delta} \le |z|_{\delta}$. For the converse, we take $u, u'$ in $M_n(V \otimes_{\delta} W)^+$ such that $$\begin{pmatrix} u&z \\ z^*&u' \end{pmatrix} \in M_{2n}(V \otimes_{\delta} W)^+ \qquad \text{and} \qquad |u|_{\delta}, |u'|_{\delta} < \|z\|_\delta + \varepsilon.$$ For c.c.p. maps $\varphi : V \to M_k$ and $\psi : W \to M_l$, we have $$\begin{pmatrix} (\varphi \otimes \psi)_n(u) & (\varphi \otimes \psi)_n(z) \\ (\varphi \otimes \psi)_n(z)^* & (\varphi \otimes \psi)_n(u') \end{pmatrix} \in M_{2nkl}^+.$$ It follows that $$\|(\varphi \otimes \psi)_n(z)\| \le \max \{ \|(\varphi \otimes \psi)_n(u)\|, \|(\varphi \otimes \psi)_n(u') \| \} \le \max \{ |u|_\delta, |u'|_\delta \} < \|z\|_\delta + \varepsilon,$$ so, $|z|_\delta \le \|z\|_\delta$.

(2) From the definitions, it is obvious that $|z|_\delta \le \|z\|_\vee$. For the converse, we take complete contractions $\varphi : V \to M_k$ and $\psi : W \to M_l$. We can regard $\varphi$ and $\psi$ as elements in $M_k(V^*)_1$ and $M_l(W^*)_1$, respectively. There exist $\varphi_i \in M_k(V^*)^+_1$ and $\psi_i \in M_l(W^*)^+_1$ ($i=1,2$) such that $$\begin{pmatrix} \varphi_1&\varphi \\ \varphi^*&\varphi_2 \end{pmatrix} \in M_{2k}(V^*)^+ \qquad \text{and} \qquad \begin{pmatrix} \psi_1&\psi \\ \psi^*&\psi_2 \end{pmatrix} \in M_{2k}(W^*)^+.$$ Because $$\begin{aligned} \begin{pmatrix} (\varphi_1 \otimes \psi_1)_n (z) & (\varphi \otimes \psi)_n(z) \\ (\varphi \otimes \psi)_n(z)^* & (\varphi_2 \otimes \psi_2)_n(z) \end{pmatrix} & = \begin{pmatrix} 1&0&0&0 \\ 0&0&0&1 \end{pmatrix} (\begin{pmatrix} \varphi_1&\varphi \\ \varphi^*&\varphi_1 \end{pmatrix} \otimes \begin{pmatrix} \psi_1&\psi \\ \psi^*&\psi_1 \end{pmatrix})_n (z) \begin{pmatrix} 1&0\\0&0\\0&0\\0&1 \end{pmatrix} \\ & \in M_{2nkl}^+, \end{aligned}$$ we get $$\|(\varphi \otimes \psi)_n(z)\| \le \max \{ \|(\varphi_1 \otimes \psi_1)_n(z)\|, \|(\varphi_2 \otimes \psi_2)_n(z)\| \} \le |z|_\delta.$$ Hence, $\|z\|_\vee \le |z|_\delta$.
\end{proof}

\begin{thm}\label{duality2}
Suppose that $V$ and $W$ are matrix regular operator spaces. The canonical map $$\Psi : V^* \otimes_{\delta} W \to D(V,W)$$ is a completely contractive and completely order isomorphic injection. If either $V$ or $W$ is finite dimensional, then $\Psi$ is a completely isometric isomorphism.
\end{thm}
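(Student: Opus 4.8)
The plan is to prove the three assertions of the embedding part directly and then to upgrade them to a completely isometric isomorphism under the finite-dimensionality hypothesis. Here $\Psi_n$ sends $z\in M_n(V^*\otimes W)$ to the finite-rank map $v\mapsto \langle z,v\rangle\in M_n(W)$; injectivity is immediate since this is just the canonical embedding of the algebraic tensor product into $CB(V,M_n(W))$, and one checks $\Psi_n(z^*)=\Psi_n(z)^*$ so that $\Psi$ is involution preserving. The technical backbone I would record first is the identity that for a \emph{positive} $u\in M_n(V^*\otimes_\delta W)^+$ one has $\|\Psi_n(u)\|_{cb}=|u|_\delta$: by Lemma \ref{positive}, applied with the matrix regular space $V^*$ in the role of $V$, we have $|u|_\delta=\|u\|_\vee$, and the injective operator space tensor norm on $V^*\otimes_\vee M_n(W)=M_n(V^*\otimes_\vee W)$ is exactly the completely bounded norm of the associated finite-rank map $\Psi_n(u)$.

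Next I would establish the complete order embedding, namely $z\in M_n(V^*\otimes_\delta W)^+$ iff $\Psi_n(z)\in CP(V,M_n(W))$. For the forward direction, given $v\in M_p(V)^+$ the evaluation $\widehat v:V^*\to M_p$ is a continuous completely positive map, so $(\widehat v\otimes\psi)_n(z)\ge 0$ for every continuous c.p.\ $\psi:W\to M_l$; since $(\widehat v\otimes\psi)_n(z)$ is a shuffle of $\psi$ applied to $(\Psi_n(z))_p(v)\in M_{pn}(W)$, and positivity in $M_{pn}(W)$ is detected by completely positive maps into matrix algebras, I conclude $(\Psi_n(z))_p(v)\ge 0$, whence $\Psi_n(z)$ is completely positive.

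The reverse inclusion is where I expect the main obstacle. Assuming $\Psi_n(z)$ completely positive, the same computation gives $(\widehat v\otimes\psi)_n(z)\ge 0$ only for the weak-$*$ continuous test maps $\widehat v$ coming from $v\in M_k(V)^+$, whereas the definition of the $\delta$-cone quantifies over \emph{all} continuous completely positive $\varphi:V^*\to M_k$, that is, over all of $M_k(V^{**})^+$. The bridge is that for fixed $z$ the assignment $\xi\mapsto(\widehat\xi\otimes\psi)_n(z)\in M_{nkl}$ is weak-$*$ continuous in $\xi\in M_k(V^{**})$, together with the fact that the canonical image of $M_k(V)^+$ is weak-$*$ dense in $M_k(V^{**})^+$ — the bipolar relation underlying the complete order embedding $V\hookrightarrow V^{**}$ of matrix regular operator spaces \cite{S}. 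Since $M_{nkl}^+$ is closed, positivity on the dense subcone propagates to all of $M_k(V^{**})^+$, yielding $z\in M_n(V^*\otimes_\delta W)^+$.

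Complete contractivity and the finite-dimensional upgrade then follow formally. For $\|z\|_\delta<1$ I choose $u,u'\in M_n(V^*\otimes_\delta W)^+$ with $\bigl(\begin{smallmatrix} u & z \\ z^* & u'\end{smallmatrix}\bigr)$ in the $\delta$-cone and $|u|_\delta,|u'|_\delta<1$; the order embedding turns this into a completely positive $2\times 2$ map whose diagonal corners $\Psi_n(u),\Psi_n(u')$ are c.c.p.\ by the key identity, so $\|\Psi_n(z)\|_{dec}\le\max\{|u|_\delta,|u'|_\delta\}<1$. When $V$ or $W$ is finite dimensional, every map $V\to M_n(W)$ is finite rank, hence lies in the range of $\Psi_n$, giving surjectivity; and given $\|\Psi_n(z)\|_{dec}<1$ one writes a dominating completely positive $2\times 2$ map with c.c.p.\ corners $S_1=\Psi_n(u),\ S_2=\Psi_n(u')$ (finite rank by the dimension hypothesis), reads off through the order embedding that $\bigl(\begin{smallmatrix} u & z \\ z^* & u'\end{smallmatrix}\bigr)$ is $\delta$-positive with $|u|_\delta=\|S_1\|_{cb}<1$ and $|u'|_\delta=\|S_2\|_{cb}<1$, and concludes $\|z\|_\delta<1$. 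This supplies the reverse inequality, hence the complete isometry.
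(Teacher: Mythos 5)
Your proposal is correct and takes essentially the same approach as the paper's proof: the same characterization of the positive cone via the relation $\psi_{nk}((\Psi_n(z))_k(v)) = (\hat{v}\otimes\psi)_n(z)$ together with the complete order embedding $V \hookrightarrow V^{**}$ and the weak$^*$-density of $M_k(V)^+$ in $M_k(V^{**})^+$, the same use of Lemma \ref{positive} (identifying $|u|_\delta = \|u\|_\vee$ with the cb-norm of the associated finite-rank map) to obtain complete contractivity from a dominating $2\times 2$ positive matrix, and the same finite-rank surjectivity plus reversal of the argument in the finite-dimensional case. You merely spell out details (the shuffle identity, the weak$^*$-continuity bridge, the bipolar-type detection of positivity) that the paper leaves implicit.
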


\begin{proof}
For $z \in M_n(V^* \otimes W), v \in M_k(V)$ and a linear map $\psi : W \to M_l$, the relation $$\psi_{nk} ((\Psi_n(z))_k(v)) = (\hat{v} \otimes \psi)_n(z) \in M_{nkl}$$ holds. Since the canonical inclusion $V \hookrightarrow V^{**}$ is completely order isomorphic and $M_k(V)^+$ is weak$^*$-dense in $M_k(V^{**})^+$, we see that $\Psi_n(z) : V \to M_n(W)$ is completely positive if and only if $z \in M_n(V^* \otimes_{\delta} W)^+$.

Let $z \in M_n(V^* \otimes W)$ with $\|z\|_{\delta}<1$. Then there exist elements $u, u'$ in $M_n(V^* \otimes_{\delta} W)_+$ such that $$\begin{pmatrix} u&z \\ z^*&u' \end{pmatrix} \in M_{2n}(V^* \otimes_{\delta} W)_+ \qquad \text{and} \qquad |u|_{\delta}, |u'|_{\delta}<1.$$ The linear map $$\begin{pmatrix} \Psi_n (u)&\Psi_n (z) \\ \Psi_n (z)^*&\Psi_n(u') \end{pmatrix} = \Psi_{2n} (\begin{pmatrix} u&z \\ z^*&u' \end{pmatrix}) : V \to M_{2n}(W)$$ is completely positive. By Lemma \ref{positive}, we have $$\|\Psi_n(u)\|_{cb}=\|u\|_\vee = |u|_{\delta}<1 \qquad \text{and} \qquad \|\Psi_n (u')\|_{cb}=\|u'\|_\vee = |u'|_{\delta}<1,$$ so, $\|\Psi_n(z)\|_{dec}<1$. Hence, $\Psi$ is completely contractive.

If either $V$ or $W$ is finite dimensional, then every linear map from $V$ into $W$ has a finite rank. Consequently, we can find $u$ and $u'$ in the reverse of the above argument.
\end{proof}

\begin{cor}\label{duality}
Suppose that $V$ and $W$ are matrix regular operator spaces such that one of them is finite dimensional. Then the identification $$V^* \otimes_{\delta} W^* = (V \otimes_{\Delta} W)^*$$ is completely isometric and completely order isomorphic.
\end{cor}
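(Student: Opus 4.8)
The plan is to obtain the desired identification as the composition of the two dualities already established, exploiting the fact that both of them have $D(V,W^*)$ as their common target. First I would record that $W^*$ is again a matrix regular operator space by \cite[Corollary 4.7]{S}, so that both Theorem \ref{duality1} and Theorem \ref{duality2} apply verbatim with $W$ replaced by $W^*$.

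Applying Theorem \ref{duality1} to the pair $(V,W)$ yields a completely isometric and completely order isomorphic isomorphism $\Phi : (V \otimes_{\Delta} W)^* \to D(V,W^*)$, determined at the scalar level by $(\Phi(\varphi)(v))(w) = \varphi(v \otimes w)$. Applying Theorem \ref{duality2} to the pair $(V,W^*)$ yields the canonical completely contractive and completely order isomorphic injection $\Psi : V^* \otimes_{\delta} W^* \to D(V,W^*)$, with $\Psi(f \otimes g)(v) = f(v)\,g$ on elementary tensors. It is here that I would invoke the finite-dimensionality hypothesis: since one of $V, W$ is finite dimensional and the dual of a finite-dimensional space is finite dimensional, either $V$ or $W^*$ is finite dimensional, so Theorem \ref{duality2} promotes $\Psi$ to a completely isometric isomorphism.

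I would then form the composition $\Phi^{-1} \circ \Psi : V^* \otimes_{\delta} W^* \to (V \otimes_{\Delta} W)^*$, which is automatically completely isometric and completely order isomorphic as a composite of maps with those properties. What remains is to verify that this composite is in fact the canonical pairing map. I would check this on elementary tensors: for $f \in V^*$ and $g \in W^*$, the functional $\Phi^{-1}(\Psi(f \otimes g))$ on $V \otimes_{\Delta} W$ sends $v \otimes w$ to $(\Psi(f \otimes g)(v))(w) = (f(v)\,g)(w) = f(v)g(w)$, which is precisely the value of the canonical functional $f \otimes g$ on $v \otimes w$. By bilinearity and continuity this identifies $\Phi^{-1} \circ \Psi$ with the canonical identification of $V^* \otimes W^*$ into $(V \otimes W)^*$, completing the proof.

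The argument is essentially bookkeeping, so the only point requiring genuine care — the main, if modest, obstacle — is confirming that the two canonical maps supplied by the preceding theorems are compatible, that is, that their shared target $D(V,W^*)$ is reached through matching concrete formulas, so that the composite is honestly the canonical tensor pairing rather than some other isomorphism of the same two spaces. One also verifies that the finite-dimensionality hypothesis is exactly what Theorem \ref{duality2} demands after substituting $W^*$ for $W$.
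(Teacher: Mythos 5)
Your proposal is correct and is essentially the paper's own proof: the paper likewise obtains the identification by composing Theorem \ref{duality1} with Theorem \ref{duality2} applied to the pair $(V,W^*)$, writing $V^* \otimes_{\delta} W^* = D(V,W^*) = (V \otimes_{\Delta} W)^*$. Your additional checks --- that finite-dimensionality of $V$ or $W$ gives finite-dimensionality of $V$ or $W^*$, and that the composite agrees with the canonical pairing on elementary tensors --- are details the paper leaves implicit, but they do not change the route.
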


\begin{proof}
By Theorems \ref{duality1} and \ref{duality2}, the identifications $$V^* \otimes_{\delta} W^* = D (V,W^*)=(V \otimes_{\Delta} W)^*$$ are completely isometric and completely order isomorphic.
\end{proof}

The following corollary is the reason why we chose the symbol $\delta$.

\begin{cor}\label{pisier}
Suppose that $\mathcal A$ and $\mathcal B$ are $C^*$-algebras. The canonical map $$\Psi : \mathcal A^* \otimes_{\delta} \mathcal B \to D(\mathcal A,\mathcal B)$$ is a completely isometric and completely order isomorphic injection. Consequently, the norm $\|\cdot\|_{\delta}$ on $\mathcal A^* \otimes \mathcal B$ coincides with Pisier's delta norm.
\end{cor}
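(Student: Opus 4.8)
The plan is to combine Theorem \ref{duality2} with the Junge--Le Merdy identification, the only genuinely new content being one reverse norm estimate. First, since $\mathcal A$ and $\mathcal B$ are in particular matrix regular operator spaces, Theorem \ref{duality2} applies directly and shows that $\Psi$ is a completely contractive, completely order isomorphic injection. Thus the order structure is already settled, and we have $\|\Psi_n(z)\|_{dec}\le\|z\|_\delta$ for every $z$; the entire remaining task is the opposite inequality. At the ground level the Junge--Le Merdy theorem (recalled in the Preliminaries) gives $\|\Psi(z)\|_{dec}=\delta(z)$, Pisier's delta norm, so complete contractivity yields $\delta(z)\le\|z\|_\delta$, and it remains only to establish $\|z\|_\delta\le\delta(z)$; this would force $\|z\|_\delta=\delta(z)=\|\Psi(z)\|_{dec}$ and simultaneously prove the final sentence.

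For the inequality $\|z\|_\delta\le\delta(z)$ I would invoke the minimality of $\|\cdot\|_\delta$ recorded in the theorem immediately preceding Proposition \ref{linearization}: part~(2) of that theorem states that $\|\cdot\|_\delta\le\|\cdot\|_\alpha$ for \emph{any} matrix regular operator space cross structure $\alpha$ on $\mathcal A^*\otimes\mathcal B$ for which every $\varphi\otimes\psi$ is c.c.p. I would take $\alpha$ to be the structure whose underlying norm is $\delta$, i.e. the one transported from the operator space predual of $D(\mathcal A,\mathcal B)$ of Le~Merdy--Magajna; its matrix regularity is then inherited from that predual. The c.c.p.\ hypothesis is verified by Stinespring dilation: writing a c.c.p.\ map $\psi:\mathcal B\to M_l$ as $\psi(b)=V^*\pi_0(b)V$ with a $*$-representation $\pi_0:\mathcal B\to B(K)$ and a contraction $V:\mathbb C^l\to K$, and putting $\sigma(f)=\varphi(f)\otimes 1_K$ (a complete contraction) and $\pi(b)=1\otimes\pi_0(b)$ (a $*$-representation with range commuting with that of $\sigma$), one gets $(\varphi\otimes\psi)(z)=(1\otimes V)^*(\sigma\cdot\pi)(z)(1\otimes V)$, whence $\|(\varphi\otimes\psi)(z)\|\le\|(\sigma\cdot\pi)(z)\|\le\delta(z)$ and, at matrix levels, complete contractivity; complete positivity follows because the delta cone is exactly the one on which the product representations $\sigma\cdot\pi$ act positively. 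The theorem then delivers $\|\cdot\|_\delta\le\delta$ at every matrix level.

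It remains to bootstrap the reverse estimate $\delta\le\|\cdot\|_\delta$, currently known only at the ground level, to all matrix levels. Here I would use that $M_n(\mathcal B)$ is again a $C^*$-algebra, that $M_n(D(\mathcal A,\mathcal B))=D(\mathcal A,M_n(\mathcal B))$ by the definition of the matrix structure on $D$, and the routine compatibility $M_n(\mathcal A^*\otimes_\delta\mathcal B)=\mathcal A^*\otimes_\delta M_n(\mathcal B)$ isometrically, which is immediate from the defining families of c.c.p.\ test maps. With these identifications $\Psi_n$ for the pair $(\mathcal A,\mathcal B)$ is precisely the ground-level map $\Psi$ for the pair $(\mathcal A,M_n(\mathcal B))$, so the ground-level Junge--Le Merdy equality applied to $(\mathcal A,M_n(\mathcal B))$ gives $\delta\le\|\cdot\|_\delta$ on $M_n(\mathcal A^*\otimes\mathcal B)$. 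Combining the two directions yields $\|z\|_\delta=\delta(z)=\|\Psi_n(z)\|_{dec}$ at all levels, hence complete isometry, and with the order isomorphism from Theorem \ref{duality2} the injection $\Psi$ is completely isometric and completely order isomorphic; the coincidence with Pisier's delta norm is then the ground-level case. I expect the main obstacle to be the second paragraph: a naive decomposition of $\Psi(z)$ into finite-rank completely positive maps is unavailable, since the witnesses $S_1,S_2$ of $\|\Psi(z)\|_{dec}$ need not lie in the algebraic tensor product, and it is precisely the universal minimality of $\|\cdot\|_\delta$ together with Stinespring dilation that replaces this missing approximation.
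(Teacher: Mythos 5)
Your first paragraph and your level-$n$ bootstrap are fine in outline (Theorem \ref{duality2} gives contractivity and the order isomorphism, and the reduction of complete isometry to a ground-level isometry for the pairs $(\mathcal A, M_n(\mathcal B))$ is sound, modulo the compatibility $M_n(\mathcal A^*\otimes_\delta\mathcal B)=\mathcal A^*\otimes_\delta M_n(\mathcal B)$, which is more than ``immediate'' but is checkable via the correspondence $CP(M_n(\mathcal B),M_l)\leftrightarrow CP(\mathcal B,M_n(M_l))$). The genuine gap is in your second paragraph. To invoke part (2) of the extremality theorem you must exhibit a structure $\alpha$ on $\mathcal A^*\otimes\mathcal B$ that is a \emph{matrix regular operator space}; this hypothesis is load-bearing, since the proof of that theorem computes $\|z\|_\alpha$ as an infimum over positive $2\times 2$ completions \emph{inside} $M_{2n}(V\otimes_\alpha W)$, which is exactly Schreiner's characterization of matrix regularity. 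Your candidate $\alpha$ (Pisier's delta norm, cones pulled back from $CP$ via $\Psi$) is a subspace of $D(\mathcal A,\mathcal B)$ with induced norms and cones, and such a subspace is \emph{not} automatically matrix regular: regularity axiom (2) demands that whenever $\|\Psi_n(z)\|_{dec}<1$ there exist positive dominating elements $u,u'$ of norm $<1$ lying \emph{in the algebraic tensor product} $M_n(\mathcal A^*\otimes\mathcal B)$, whereas the witnesses $S_1,S_2$ coming from the definition of $\|\cdot\|_{dec}$ are arbitrary completely positive maps, in general not of finite rank. That existence statement is precisely (equivalent to) the inequality $\|z\|_\delta\le\delta(z)$ you are trying to prove, so ``matrix regularity is inherited from the predual'' begs the question. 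The reference to Le Merdy--Magajna compounds the problem: their predual exists for von Neumann algebra targets and is a structure on $\mathcal A\otimes\mathcal R_*$, not on $\mathcal A^*\otimes\mathcal B$, and for a general $C^*$-algebra $\mathcal B$ the space $D(\mathcal A,\mathcal B)$ need not be a dual space at all. Your Stinespring computation only verifies the easy ``testing'' hypothesis (that each $\varphi\otimes\psi$ is c.c.p.\ for the delta norm); it contributes nothing toward regularity.

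What actually fills this hole in the paper is the Junge--Le Merdy \emph{factorization} theorem (their Theorem 2.1, a different and stronger statement than the ground-level isometry you quote): since $\Psi_n(z):\mathcal A\to M_n(\mathcal B)$ has finite rank and $\|\Psi_n(z)\|_{dec}<1$, it factors as $T\circ S$ through some $M_k$ with $\|S\|_{dec},\|T\|_{dec}<1$. Choosing decomposability witnesses $S_i$ for $S$ and $T_i$ for $T$, the compositions $T_i\circ S_i$ are completely positive, have finite rank (they factor through $M_k$), hence equal $\Psi_n(u_i)$ for elements $u_i$ of the algebraic tensor product; Haagerup's Remark 1.2 gives complete positivity of $\Psi_{2n}\bigl(\begin{smallmatrix} u_1 & z \\ z^* & u_2\end{smallmatrix}\bigr)$, and Lemma \ref{positive} gives $|u_i|_\delta=\|u_i\|_\vee=\|T_i\circ S_i\|_{cb}<1$, whence $\|z\|_\delta<1$. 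In short: the missing idea in your proposal is exactly this factorization through a matrix algebra, which converts non-finite-rank decomposability witnesses into witnesses living in $\mathcal A^*\otimes\mathcal B$; neither the minimality theorem nor Stinespring dilation can substitute for it.
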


\begin{proof}
By Theorem \ref{duality2}, $\Psi$ is completely contractive and completely order isomorphic. Let $\|\Psi_n(z)\|_{dec}<1$. Since $\Psi_n(z) : \mathcal A \to M_n(\mathcal B)$ has a finite rank, it has a factorization
$$\xymatrix{\mathcal A \ar[rr]^{\Psi_n(z)} \ar[dr]_S && M_n(\mathcal B) \\ & M_k \ar[ur]_T &}$$ with $\|S\|_{dec}<1, \|T\|_{dec}<1$ by \cite[Theorem 2.1]{JLM}. There exist c.p. maps $S_i : \mathcal A \to M_k$ and $T_i : M_k \to M_n(\mathcal B)$ ($i=1,2$) such that $\|S_i\|_{cb}, \|T_i\|_{cb} <1$ and $$\begin{pmatrix} S_1 & S \\ S^* & S_2 \end{pmatrix}, \begin{pmatrix} T_1 & T \\ T^* & T_2 \end{pmatrix} \ge_{cp} 0.$$ Since $S_i : \mathcal A \to M_k$ and $T_i : M_k \to M_n(\mathcal B)$ have finite ranks, we can find $u_i \in M_n(\mathcal A^* \otimes \mathcal B)$ such that $\Psi_n(u_i)=T_i \circ S_i$. By \cite[Remark 1.2]{H}, the map $$\Psi_{2n}(\begin{pmatrix} u_1 & z \\ z^* & u_2 \end{pmatrix}) = \begin{pmatrix} T_1 \circ S_1 & \Psi_n(z) \\ \Psi_n(z)^* & T_2 \circ S_2 \end{pmatrix} : \mathcal A \to M_{2n}(\mathcal B)$$ is completely positive. Since $|u_i|_\delta = \|u_i\|_\vee = \|T_i \circ S_i\|_{cb}<1$, we have $\|z\|_\delta <1$.

The last statement follows from \cite[Corollary 12.5]{Pi} and \cite[Theorem 2.1]{JLM}.
\end{proof}

Haagerup's results \cite[Theorem 2.1]{H} and \cite[Proposition 3.4]{H} can be rephrased as the following two corollaries.

\begin{cor}
Let $\mathcal R$ be a von Neumann algebra. Then the following seven conditions are equivalent:
\begin{enumerate}
\item[(i)] $\mathcal R$ is injective;
\item[(ii)] for every $C^*$-algebra $\mathcal A$, two norms $\|\cdot\|_\wedge$ and $\|\cdot\|_\Delta$ on $\mathcal A \otimes \mathcal R_*$ coincide;
\item[(iii)] for every $C^*$-algebra $\mathcal A$, two norms $\|\cdot\|_\vee$ and $\|\cdot\|_\delta$ on $\mathcal A^* \otimes \mathcal R$ coincide;
\item[(iv)] for every $n \in \mathbb N$, two norms $\|\cdot\|_\wedge$ and $\|\cdot\|_\Delta$ on $\ell_\infty^n \otimes \mathcal R_*$ coincide;
\item[(v)] for every $n \in \mathbb N$, two norms $\|\cdot\|_\vee$ and $\|\cdot\|_\delta$ on $\ell_1^n \otimes \mathcal R$ coincide;
\item[(vi)] two norms $\|\cdot\|_\wedge$ and $\|\cdot\|_\Delta$ on $\ell_\infty^n \otimes \mathcal R_*$ are equivalent uniformly for $n \in \mathbb N$;
\item[(vii)] two norms $\|\cdot\|_\vee$ and $\|\cdot\|_\delta$ on $\ell_1^n \otimes \mathcal R$ are equivalent uniformly for $n \in \mathbb N$.
\end{enumerate}
\end{cor}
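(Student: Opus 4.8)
The plan is to read the whole corollary as a dictionary that converts Haagerup's two original results into the language of the tensor products $\otimes_\Delta,\otimes_\delta$ and the classical $\otimes_\wedge,\otimes_\vee$, and then to invoke \cite[Theorem 2.1]{H} and \cite[Proposition 3.4]{H} essentially verbatim. The backbone is the pair of dualities already in hand: the corollary identifying $\mathcal A\otimes_\Delta\mathcal R_*$ as the predual of $D(\mathcal A,\mathcal R)$ gives $(\mathcal A\otimes_\Delta\mathcal R_*)^*=D(\mathcal A,\mathcal R)$ isometrically, while the standard operator space duality gives $(\mathcal A\otimes_\wedge\mathcal R_*)^*=CB(\mathcal A,\mathcal R)$. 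Since any decomposable map is completely bounded with $\|T\|_{cb}\le\|T\|_{dec}$, one has $\|\cdot\|_\Delta\le\|\cdot\|_\wedge$ on $\mathcal A\otimes\mathcal R_*$; comparing dual balls then shows that condition (ii), the equality $\|\cdot\|_\wedge=\|\cdot\|_\Delta$ for every $\mathcal A$, holds exactly when $CB(\mathcal A,\mathcal R)=D(\mathcal A,\mathcal R)$ isometrically for every $\mathcal A$. By the Wittstock decomposition theorem together with \cite[Theorem 2.1]{H}, the latter characterizes injectivity, yielding (i)$\Leftrightarrow$(ii).

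For the $\vee/\delta$ strand I would use the two completely isometric embeddings $\mathcal A^*\otimes_\vee\mathcal R\hookrightarrow CB(\mathcal A,\mathcal R)$, the injective operator space tensor product realized as the finite-rank completely bounded maps, and $\mathcal A^*\otimes_\delta\mathcal R\hookrightarrow D(\mathcal A,\mathcal R)$ from Corollary \ref{pisier}. Under these identifications the algebraic tensor $\mathcal A^*\otimes\mathcal R$ is precisely the space of finite-rank maps, with $\|\cdot\|_\vee=\|\cdot\|_{cb}$ and $\|\cdot\|_\delta=\|\cdot\|_{dec}$; hence (iii) says that every finite-rank completely bounded map $\mathcal A\to\mathcal R$ is decomposable with equal norm, for every $\mathcal A$. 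Because injectivity is detected on finite-dimensional data, this finite-rank assertion is again equivalent to (i), which is what underlies the equivalence of (ii) and (iii).

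The remaining conditions come by specialization and relaxation. Taking $\mathcal A=\ell_\infty^n$, so that $\mathcal A^*=\ell_1^n$, in (ii) and (iii) produces (iv) and (v), which are therefore implied by them; and equality of norms trivially implies their uniform equivalence, giving (iv)$\Rightarrow$(vi) and (v)$\Rightarrow$(vii). To close both cycles I would invoke \cite[Proposition 3.4]{H}, which is exactly the quantitative statement that a uniform comparison of the decomposable and completely bounded norms on maps out of the commutative algebras $\ell_\infty^n$ already forces $\mathcal R$ to be injective; translated through the embeddings above, this reads (vi)$\Rightarrow$(i) and (vii)$\Rightarrow$(i), completing the chain.

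I expect the genuinely delicate step to be the passage from the uniform-equivalence conditions (vi) and (vii) back to injectivity. The equalities (ii)--(v) drop out of the dualities by a one-line comparison of dual balls, and the downward implications (ii)$\Rightarrow$(iv)$\Rightarrow$(vi) and (iii)$\Rightarrow$(v)$\Rightarrow$(vii) are formal. What is not formal is that a mere constant-factor equivalence, tested only on the commutative algebras $\ell_\infty^n$, should propagate to the isometric decomposition of arbitrary completely bounded maps and hence to injectivity; this is precisely where \cite[Proposition 3.4]{H} carries the weight, and the point of the corollary is that the present tensor-norm formalism aligns exactly with Haagerup's hypotheses, so that no new estimate is required.
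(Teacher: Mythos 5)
Your overall route is the paper's own: the paper offers no independent argument for this corollary, stating only that it is Haagerup's Theorem 2.1 ``rephrased'' through the dualities established earlier, namely $(\mathcal A\otimes_\Delta\mathcal R_*)^*=D(\mathcal A,\mathcal R)$ versus $(\mathcal A\otimes_\wedge\mathcal R_*)^*=CB(\mathcal A,\mathcal R)$ for the $\wedge/\Delta$ strand, and the completely isometric identifications $\|\cdot\|_\vee=\|\cdot\|_{cb}$, $\|\cdot\|_\delta=\|\cdot\|_{dec}$ on finite-rank maps (Corollary \ref{pisier}) for the $\vee/\delta$ strand. Your comparison of dual balls, the specializations $\mathcal A=\ell_\infty^n$ giving (ii)$\Rightarrow$(iv)$\Rightarrow$(vi) and (iii)$\Rightarrow$(v)$\Rightarrow$(vii), and the closing of the cycle at the uniform-equivalence conditions are exactly the intended translation.

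There is, however, one concrete error: you hang the crucial implications (vi)$\Rightarrow$(i) and (vii)$\Rightarrow$(i) on \cite[Proposition 3.4]{H}, and that result does not say what you need. Proposition 3.4 is the \emph{two-dimensional} isometric statement (every completely bounded map out of $\ell_\infty^2$ satisfies $\|T\|_{dec}=\|T\|_{cb}$, for an arbitrary von Neumann algebra target); it is precisely what the paper rephrases as the \emph{next} corollary, on $\ell_\infty^2\otimes\mathcal R_*$ and $\ell_1^2\otimes\mathcal R$, and it holds with no injectivity hypothesis whatsoever, so it cannot force injectivity. The statement that a uniform constant-factor comparison of $\|\cdot\|_{dec}$ and $\|\cdot\|_{cb}$ on maps $\ell_\infty^n\to\mathcal R$ (uniformly in $n$) already implies injectivity of $\mathcal R$ is one of the equivalent conditions \emph{inside} Haagerup's Theorem 2.1 itself; that theorem is what carries the weight of the non-formal step, and you should cite it there (you already invoke it, correctly, for (i)$\Leftrightarrow$(ii)). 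A second, minor point: your claim that (iii)$\Leftrightarrow$(i) because ``injectivity is detected on finite-dimensional data'' is an unnecessary extra principle; the cycle closes without it, since (i)$\Rightarrow$(iii) follows from Haagerup's isometric decomposition for injective range restricted to finite-rank maps, and (iii)$\Rightarrow$(v)$\Rightarrow$(vii)$\Rightarrow$(i) returns to injectivity through Theorem 2.1. With the citation corrected, your proof is the paper's proof.
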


\begin{cor}
Let $\mathcal R$ be a von Neumann algebra. Then,
\begin{enumerate}
\item two norms $\|\cdot\|_\wedge$ and $\|\cdot\|_\Delta$ coincide on $\ell_\infty^2 \otimes \mathcal R_*$.
\item two norms $\|\cdot\|_\vee$ and $\|\cdot\|_\delta$ coincide on $\ell_1^2 \otimes \mathcal R$.
\end{enumerate}
\end{cor}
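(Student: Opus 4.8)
The plan is to reduce both assertions to the single statement that every completely bounded map $T:\ell_\infty^2\to\mathcal R$ is decomposable with $\|T\|_{dec}=\|T\|_{cb}$, which is Haagerup's \cite[Proposition 3.4]{H}, and then to transport this fact across the dualities already established. Throughout, the crucial observation is that the two preduals in question pair with maps through the \emph{same} canonical bracket $\langle a\otimes\rho,T\rangle=\rho(T(a))$, so that comparing tensor norms is the same as comparing norms of maps. Since $\|T\|_{cb}\le\|T\|_{dec}$ always holds, the dual inequalities $\|\cdot\|_\Delta\le\|\cdot\|_\wedge$ and $\|\cdot\|_\vee\le\|\cdot\|_\delta$ are automatic, so in each part only the reverse inequality is genuinely at stake.

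For (1), I take $V=\ell_\infty^2$ and $W=\mathcal R_*$. By the corollary identifying $\mathcal A\otimes_\Delta\mathcal R_*$ as the predual of $D(\mathcal A,\mathcal R)$ we have $(\ell_\infty^2\otimes_\Delta\mathcal R_*)^*=D(\ell_\infty^2,\mathcal R)$ isometrically, while $(\ell_\infty^2\otimes_\wedge\mathcal R_*)^*=CB(\ell_\infty^2,\mathcal R)$ isometrically, both via the bracket above. Since two norms on $\ell_\infty^2\otimes\mathcal R_*$ agree precisely when their dual norms agree, the equality $\|\cdot\|_\wedge=\|\cdot\|_\Delta$ is equivalent to $D(\ell_\infty^2,\mathcal R)=CB(\ell_\infty^2,\mathcal R)$ with $\|\cdot\|_{dec}=\|\cdot\|_{cb}$. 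This is exactly \cite[Proposition 3.4]{H}, and passing to preduals gives (1).

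For (2), I would not argue from scratch but observe that the spaces occurring here are the duals of those in (1). Applying Corollary \ref{duality} with $V=\ell_\infty^2$ (finite dimensional) and $W=\mathcal R_*$ yields $\ell_1^2\otimes_\delta\mathcal R=(\ell_\infty^2\otimes_\Delta\mathcal R_*)^*$ completely isometrically, and the analogous injective--projective operator space duality (the minimal tensor product with a finite dimensional factor computes completely bounded maps) gives $\ell_1^2\otimes_\vee\mathcal R=(\ell_\infty^2\otimes_\wedge\mathcal R_*)^*$. Hence $\|\cdot\|_\vee=\|\cdot\|_\delta$ on $\ell_1^2\otimes\mathcal R$ is dual to $\|\cdot\|_\wedge=\|\cdot\|_\Delta$ on $\ell_\infty^2\otimes\mathcal R_*$ and therefore follows from part (1); alternatively one checks directly that Theorem \ref{duality2} makes $\ell_1^2\otimes_\delta\mathcal R\to D(\ell_\infty^2,\mathcal R)$ a complete isometry and $\ell_1^2\otimes_\vee\mathcal R=CB(\ell_\infty^2,\mathcal R)$, reducing (2) to the same instance of \cite[Proposition 3.4]{H}.

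The one substantial point, which I would isolate as a lemma, is the core inequality $\|T\|_{dec}\le\|T\|_{cb}$. Writing $a=T(e_1)$, $b=T(e_2)$ for the two minimal projections of $\ell_\infty^2$, and using that a map out of the commutative algebra $\ell_\infty^2$ is completely positive exactly when it is positive at $e_1$ and $e_2$, a decomposition realizing the norm amounts to finding $r_1,r_2,s_1,s_2\in\mathcal R^+$ with
$$\begin{pmatrix} r_1 & a \\ a^* & s_1 \end{pmatrix}\ge 0,\qquad \begin{pmatrix} r_2 & b \\ b^* & s_2 \end{pmatrix}\ge 0,$$
and $\|r_1+r_2\|\le\|T\|_{cb}$, $\|s_1+s_2\|\le\|T\|_{cb}$ (where $\|S_i\|=\|S_i(1)\|$ is read off the diagonal sums). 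The hard part is that these positive elements must be produced \emph{inside} $\mathcal R$, not merely in an ambient injective algebra $B(H)\supseteq\mathcal R$; Paulsen's off-diagonalization would deliver the decomposition only into $B(H)$. The two-dimensional domain is what makes the local construction possible unconditionally, since it imposes exactly two independent $2\times 2$ positivity constraints that can be solved within $\mathcal R$. For $\ell_\infty^n$ with $n\ge 3$ the same construction forces injectivity of $\mathcal R$, which is precisely the content of the preceding corollary; the whole point of the present statement is that the case $n=2$ escapes that obstruction.
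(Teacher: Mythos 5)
Your proposal is correct and is essentially the paper's own argument: the paper gives no separate proof, presenting this corollary as a rephrasing of Haagerup's Proposition 3.4 via the dualities $(\ell_\infty^2\otimes_\Delta\mathcal R_*)^*=D(\ell_\infty^2,\mathcal R)$, $(\ell_\infty^2\otimes_\wedge\mathcal R_*)^*=CB(\ell_\infty^2,\mathcal R)$ and Corollary \ref{duality}, which is precisely the reduction you spell out. One inessential caveat: your closing aside that the construction for $\ell_\infty^n$, $n\ge 3$, would force injectivity of $\mathcal R$ overstates the cited results---Haagerup's Theorem 2.1 (the preceding corollary) requires coincidence for all $n$, or uniform equivalence, not a single $n\ge 3$---but nothing in your proof depends on that remark.
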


A finite factor $\mathcal R$ can be regarded as an algebraic subspace of its predual $\mathcal R_*$ via $$x \in \mathcal R \mapsto {\rm tr}(\ \cdot \ x) \in \mathcal R_*.$$ The inclusion $\mathcal R \subset \mathcal R_*$ is order isomorphic.

\begin{thm}
Suppose that $\mathcal R$ is a finite factor and that $u_1,\cdots, u_n \in \mathcal R$ are arbitrary unitaries. Then we have $$\|\sum_{k=1}^n e_k \otimes u_k \|_{\ell_\infty^n \otimes_\Delta \mathcal R_*} = 1.$$
\end{thm}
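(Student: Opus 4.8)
The plan is to pass to the dual. By Theorem \ref{duality1} (applied with $V=\ell_\infty^n$ and $W=\mathcal R_*$, so that $W^*=\mathcal R$), the space $\ell_\infty^n \otimes_\Delta \mathcal R_*$ is the predual of $D(\ell_\infty^n,\mathcal R)$, whence by Hahn--Banach
$$\|z\|_\Delta = \sup\{ |\langle z, T\rangle| : T \in D(\ell_\infty^n,\mathcal R),\ \|T\|_{dec}\le 1\},$$
where, writing $x_k=T(e_k)\in\mathcal R$ and letting $\tau$ be the normalized trace of the finite factor $\mathcal R$, the pairing unwinds to $\langle z,T\rangle=\sum_{k=1}^n \tau(x_k u_k)$. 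I would first record two elementary facts about such maps: since $\ell_\infty^n$ is commutative, a linear map $\ell_\infty^n\to M_2(\mathcal R)$ is completely positive if and only if it sends each minimal projection $e_k$ to a positive element, and a completely positive $S$ satisfies $\|S\|=\|S(1)\|=\|\sum_k S(e_k)\|$.

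For the upper bound I would fix $T$ with $\|T\|_{dec}<1$ together with a witnessing completely positive map $\bigl(\begin{smallmatrix} S_1 & T \\ T^* & S_2\end{smallmatrix}\bigr):\ell_\infty^n\to M_2(\mathcal R)$ satisfying $\|S_i\|<1$. Setting $a_k=S_1(e_k)$ and $b_k=S_2(e_k)$, evaluation at $e_k$ gives $\begin{pmatrix} a_k & x_k \\ x_k^* & b_k\end{pmatrix}\ge 0$ in $M_2(\mathcal R)$. The crucial step is to pass from this operator-valued positivity to the scalar inequality $|\tau(x_k u_k)|\le \sqrt{\tau(a_k)\tau(b_k)}$. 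I would obtain it by conjugating the positive block with the unitary $\mathrm{diag}(1,u_k)\in M_2(\mathcal R)$ and then applying the completely positive map $\mathrm{id}_2\otimes\tau$; since $\tau$ is a trace and $u_k$ is unitary, $\tau(u_k^* b_k u_k)=\tau(b_k)$, so the outcome is the positive scalar matrix $\begin{pmatrix} \tau(a_k) & \tau(x_k u_k) \\ \overline{\tau(x_k u_k)} & \tau(b_k)\end{pmatrix}\ge 0$, whose positivity is exactly the asserted inequality. Summing over $k$, applying Cauchy--Schwarz, and then using $\tau(y)\le\|y\|$ for $y\ge 0$ together with $\tau(1)=1$ yields
$$|\langle z,T\rangle| \le \sum_{k=1}^n \sqrt{\tau(a_k)\tau(b_k)} \le \sqrt{\textstyle\|\sum_k a_k\|\,\|\sum_k b_k\|} \le \max\{\|S_1\|,\|S_2\|\} < 1.$$
Letting $\|T\|_{dec}\to 1$ gives $\|z\|_\Delta\le 1$.

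For the lower bound I would exhibit an explicit dual witness: let $T:\ell_\infty^n\to\mathcal R$ be the map $T(e_k)=u_k^*$. Taking $S_1=S_2$ to be the completely positive map with $S_i(e_k)=1$, each block $\begin{pmatrix} 1 & u_k^* \\ u_k & 1\end{pmatrix}=\begin{pmatrix} 1 \\ u_k\end{pmatrix}\begin{pmatrix} 1 & u_k^*\end{pmatrix}$ is positive, so $\bigl(\begin{smallmatrix} S_1 & T \\ T^* & S_2\end{smallmatrix}\bigr)$ is completely positive and $\|T\|_{dec}\le \max\{\|S_1\|,\|S_2\|\}=\|n\cdot 1\|=n$. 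On the other hand $\langle z,T\rangle=\sum_k \tau(u_k^* u_k)=\sum_k\tau(1)=n$, so the normalized map $T/\|T\|_{dec}$ has decomposable norm at most $1$ and pairs with $z$ to give at least $n/n=1$. Hence $\|z\|_\Delta\ge 1$, and combining the two bounds yields $\|z\|_\Delta=1$.

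I expect the main obstacle to be the upper bound, precisely the passage from $\begin{pmatrix} a_k & x_k \\ x_k^* & b_k\end{pmatrix}\ge 0$ to the sharp scalar estimate $|\tau(x_k u_k)|\le\sqrt{\tau(a_k)\tau(b_k)}$ and the subsequent Cauchy--Schwarz bookkeeping across the $n$ summands. This is where both hypotheses enter essentially: finiteness of $\mathcal R$ supplies the normalized trace $\tau$ with $\tau(1)=1$ (and legitimizes the embedding $\mathcal R\subset\mathcal R_*$), while unitarity of the $u_k$ is exactly what makes $\tau(u_k^* b_k u_k)=\tau(b_k)$ hold, so that the trace-compression collapses to a clean $2\times 2$ determinant condition.
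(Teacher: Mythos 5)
Your proof is correct, but only half of it follows the paper's route. The lower bound is essentially the paper's own argument: your witness $T(e_k)=u_k^*$ with $\|T\|_{dec}\le n$ is exactly the element $\sum_{l=1}^n e_l\otimes u_l^*\in\ell_1^n\otimes_\delta\mathcal R$ that the paper pairs against $z$ (the two are identified by Theorem \ref{duality2}, and both pairings evaluate to $n$). Your upper bound, however, is genuinely different. The paper stays on the primal side: it exhibits the positive matrix
$$\begin{pmatrix} 1\otimes 1 & \sum_{k} e_k\otimes u_k \\ \sum_{k} e_k\otimes u_k^* & 1\otimes 1 \end{pmatrix}=\sum_{k=1}^n e_k\otimes\begin{pmatrix} 1 & u_k \\ u_k^* & 1 \end{pmatrix}\in M_2(\ell_\infty^n\otimes_\Delta\mathcal R_*)_+,$$
so that $1\otimes 1$ serves as a diagonal witness in the very definition of $\|\cdot\|_\Delta$ and $\|z\|_\Delta\le|1\otimes 1|_\Delta\le\|1\|_{\ell_\infty^n}\|1\|_{\mathcal R_*}=1$; this needs nothing beyond the definition of the $\Delta$-norm and the order embedding $\mathcal R\subset\mathcal R_*$. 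You instead pass to the dual: Theorem \ref{duality1} plus Hahn--Banach writes $\|z\|_\Delta$ as a supremum over the unit ball of $D(\ell_\infty^n,\mathcal R)$, and you control each pairing by the scalar estimate $|\tau(x_ku_k)|\le\sqrt{\tau(a_k)\tau(b_k)}$, obtained by conjugating the positive block by $\mathrm{diag}(1,u_k)$ and compressing with $\mathrm{id}_2\otimes\tau$; your Cauchy--Schwarz bookkeeping across $k$ is also correct. As to what each approach buys: the paper's argument is shorter and self-contained, invoking duality only where it is unavoidable (the lower bound), whereas yours isolates exactly where finiteness of the factor enters (traciality gives $\tau(u_k^*b_ku_k)=\tau(b_k)$), at the price of using the duality theorem for both inequalities. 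The two upper bounds are in fact dual to one another: pairing the paper's positive witness against your completely positive block map $\begin{pmatrix} S_1 & T \\ T^* & S_2 \end{pmatrix}$ yields $2\,\mathrm{Re}\,\langle z,T\rangle+\tau(S_1(1))+\tau(S_2(1))\ge 0$, which after a phase rotation of $T$ recovers your estimate in aggregated form.
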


\begin{proof}
From $$\begin{aligned} \begin{pmatrix} 1 \otimes 1 & \sum_{k=1}^n e_k \otimes u_k \\ \sum_{k=1}^n e_k \otimes u_k^* & 1 \otimes 1 \end{pmatrix} & = \sum_{k=1}^n \begin{pmatrix} e_k \otimes 1 & e_k \otimes u_k \\ e_k \otimes u_k^* & e_k \otimes 1 \end{pmatrix}\\ & = \sum_{k=1}^n e_k \otimes \begin{pmatrix} 1 & u_k \\ u_k^* & 1 \end{pmatrix} \\ & \in M_2(\ell_\infty^n \otimes_\Delta \mathcal R_*)_+, \end{aligned}$$ it follows that $$\|\sum_{k=1}^n e_k \otimes u_k \|_{\ell_\infty^n \otimes_\Delta \mathcal R_*} \le | 1 \otimes 1 |_\Delta \le 1.$$ For the converse, we will appeal to the duality. Let $c$ denote the counting measure. By Theorem \ref{delta} and Corollary \ref{duality}, we have $$\begin{aligned} n & = (c \otimes {\rm tr})(1 \otimes 1) \\ & = (c \otimes {\rm tr})((\sum_{k=1}^n e_k \otimes u_k)(\sum_{l=1}^n e_l \otimes u_l^*)) \\ & \le \|\sum_{k=1}^n e_k \otimes u_k \|_{\ell_\infty^n \otimes_\Delta \mathcal R_*} \|\sum_{l=1}^n e_l \otimes u_l^* \|_{\ell_1^n \otimes_\delta \mathcal R} \\ & \le \|\sum_{k=1}^n e_k \otimes u_k \|_{\ell_\infty^n \otimes_\Delta \mathcal R_*} (\sum_{l=1}^n \|e_l\|_{\ell^n_1} \|u_l^*\|_{\mathcal R})\\ & = n \|\sum_{k=1}^n e_k \otimes u_k \|_{\ell_\infty^n \otimes_\Delta \mathcal R_*}.
\end{aligned}$$
\end{proof}

\begin{cor}
Suppose that $g_1, \cdots, g_n$ are the generators for the free group $\mathbb F_n$. Then we have $$\|\sum_{k=1}^n e_k \otimes \lambda (g_k) \|_{\ell_\infty^n \otimes_\Delta L(\mathbb F_n)_*} = 1 \qquad \text{and} \qquad \|\sum_{k=1}^n e_k \otimes \lambda (g_k) \|_{\ell_\infty^n \otimes_\wedge L(\mathbb F_n)_*} \ge {n \over 2\sqrt{n-1}}$$ for $n \ge 2$.
\end{cor}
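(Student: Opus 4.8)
The first identity is immediate. For $n\ge 2$ the free group factor $L(\mathbb F_n)$ is a ${\rm II}_1$ factor, hence a finite factor, and each $\lambda(g_k)$ is a unitary; so the preceding theorem applies verbatim with $\mathcal R=L(\mathbb F_n)$ and $u_k=\lambda(g_k)$, giving $\|\sum_{k=1}^n e_k\otimes\lambda(g_k)\|_{\ell_\infty^n\otimes_\Delta L(\mathbb F_n)_*}=1$.

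For the second estimate I would run the dual of the argument used in the theorem. Recall from the Introduction that $(\ell_\infty^n\otimes_\wedge L(\mathbb F_n)_*)^*=CB(\ell_\infty^n,L(\mathbb F_n))$. Let $T:\ell_\infty^n\to L(\mathbb F_n)$ be the map $e_k\mapsto\lambda(g_k)^*$, which induces a functional on $\ell_\infty^n\otimes_\wedge L(\mathbb F_n)_*$ of norm $\|T\|_{cb}$. Writing $z=\sum_k e_k\otimes\lambda(g_k)$ and using the embedding $L(\mathbb F_n)\hookrightarrow L(\mathbb F_n)_*$ by the trace together with the counting measure $c$ exactly as in the theorem, the product $(\sum_k e_k\otimes\lambda(g_k))(\sum_l e_l\otimes\lambda(g_l)^*)$ collapses to $1\otimes 1$, so that
\[ n=(c\otimes{\rm tr})(1\otimes 1)=(c\otimes{\rm tr})\Big(\big(\sum_{k}e_k\otimes\lambda(g_k)\big)\big(\sum_{l}e_l\otimes\lambda(g_l)^*\big)\Big)\le\|z\|_\wedge\,\|T\|_{cb}. \]
Hence $\|z\|_\wedge\ge n/\|T\|_{cb}$, and the whole statement reduces to proving $\|T\|_{cb}\le 2\sqrt{n-1}$. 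This is exactly the point where the $\wedge$-norm departs from the $\Delta$-norm: in the theorem the analogous dual element was controlled only by the crude subcross estimate $n$ (producing the trivial $\|z\|_\wedge\ge 1$), whereas here a sharper completely bounded estimate is available.

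The main obstacle is therefore the completely bounded inequality $\|T\|_{cb}\le 2\sqrt{n-1}$; unwinding the cb norm out of $\ell_\infty^n$ this reads $\|\sum_k X_k\otimes\lambda(g_k)\|\le 2\sqrt{n-1}$ for all contractions $X_k\in M_m$ and all $m$. The scalar case $X_k=1$ is the Akemann--Ostrand/Kesten value $\|\sum_k\lambda(g_k)\|=2\sqrt{n-1}$, which also furnishes the matching lower bound $\|T\|_{cb}\ge 2\sqrt{n-1}$, so the estimate is sharp and returns exactly $n/(2\sqrt{n-1})$. The difficulty is that the naive row-plus-column Haagerup estimate for free Haar unitaries only yields the weaker constant $2\sqrt n$; to recover the tree value $2\sqrt{n-1}$ one must exploit the acyclic (tree) structure of the Cayley graph of $\mathbb F_n$, for instance through the free Fock space realization of the generators, and check that operator coefficients cannot beat scalar ones. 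For $n=2$ this is transparent: factoring out $\lambda(g_1)$ rewrites $X_1\otimes\lambda(g_1)+X_2\otimes\lambda(g_2)$ as $(1\otimes\lambda(g_1))(X_1\otimes 1+X_2\otimes w)$ with $w=\lambda(g_1)^*\lambda(g_2)$ a Haar unitary, whence $\|X_1\otimes 1+X_2\otimes w\|=\sup_{|\zeta|=1}\|X_1+\zeta X_2\|\le\|X_1\|+\|X_2\|\le 2=2\sqrt{n-1}$. For $n\ge 3$ I would carry out the analogous but more delicate length-function/creation-operator estimate, after which the displayed inequality gives $\|z\|_\wedge\ge n/(2\sqrt{n-1})$.
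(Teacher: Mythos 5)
Your reduction is exactly the paper's: the first equality follows from the preceding theorem applied to the ${\rm II}_1$ factor $L(\mathbb F_n)$ with the unitaries $\lambda(g_k)$, and for the second estimate you pair $z=\sum_k e_k\otimes\lambda(g_k)$ against the map $T:e_k\mapsto\lambda(g_k)^*$ under the duality $(\ell_\infty^n\otimes_\wedge L(\mathbb F_n)_*)^*=CB(\ell_\infty^n,L(\mathbb F_n))$, obtaining $n=(c\otimes{\rm tr})(1\otimes 1)\le\|z\|_\wedge\|T\|_{cb}$. You also correctly recognize that the scalar Akemann--Ostrand value does not suffice here: the $\wedge$-duality forces matrix coefficients, so everything hinges on the estimate $\sup_m\sup_{\|X_k\|\le 1}\bigl\|\sum_k X_k\otimes\lambda(g_k)\bigr\|\le 2\sqrt{n-1}$.

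But that estimate is precisely where your proposal stops being a proof. For $n\ge 3$ you offer only a program (``exploit the tree structure via the free Fock space realization and check that operator coefficients cannot beat scalar ones''); no argument is actually given, and this is a genuinely nontrivial theorem --- as you note, the Haagerup--Pisier row/column bound only yields $2\sqrt n$. The paper fills exactly this hole by citation: \cite[Theorem 5.4.7]{ER2} asserts $\|\sum_{k=1}^n e_k\otimes\lambda(g_k)\|_{\ell_1^n\otimes_\vee L(\mathbb F_n)}\le 2\sqrt{n-1}$, which, since $\ell_1^n\otimes_\vee L(\mathbb F_n)=CB(\ell_\infty^n,L(\mathbb F_n))$ isometrically ($\ell_1^n$ being finite dimensional) and since passing to adjoints does not change the supremum, is exactly your bound $\|T\|_{cb}\le 2\sqrt{n-1}$; the rest of the paper's proof is the duality computation you wrote. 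Your $n=2$ verification is correct (though superfluous --- the triangle inequality already gives the bound $2$ there), but for $n\ge 3$ you must either reproduce the operator-coefficient Akemann--Ostrand-type argument or cite it; as written, the proposal is incomplete at its only substantive point.
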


\begin{proof}
According to \cite[Theorem 5.4.7]{ER2}, $$\|\sum_{k=1}^n e_k \otimes \lambda (g_k) \|_{\ell_1^n \otimes_\vee L(\mathbb F_n)} \le 2 \sqrt{n-1}.$$ The remaining proof is similar to the above if we apply the duality between the operator space injective and projective tensor products.
\end{proof}

\begin{thm}
Suppose that $V$ and $W$ are matrix regular operator spaces and $z$ is an element of $V^* \otimes W$.
\begin{enumerate}
\item $z$ belongs to $(V^* \otimes_{\Delta} W)_+$ if and only if its associated finite rank map $\Psi (z) : V \to W$ has a factorization $$\Psi (z) = S \circ R$$ where $R : V \to M_k$ and $S : M_k \to W$ are completely positive.
\item Under assumption (1), $|z|_{\Delta}<1$ if and only if we can take $R$ as a c.c.p. map and $S(x)=\alpha x \otimes w \alpha^*$ for a row vector $\alpha$ with $\|\alpha\|_{\ell^{kl}_2} <1$ and $w \in M_l(W)^+_1$.
\item Let $z \in V^* \otimes W$. Then $\|z\|_{\Delta}<1$ if and only if there exist $\Phi_i : V \to W (i=1,2)$ such that the map $$\begin{pmatrix} \Phi_1 & \Psi (z) \\ \Psi (z)^* & \Phi_2 \end{pmatrix} : V \to M_2(W)$$ has a completely positive matrix factorization and $\Phi_i$ has the factorization $\Phi_i = S_i \circ R_i$ such that $R_i : V \to M_{k_i}$ is c.c.p. and $S_i : M_{k_i} \to W$ has the form $S_i(x)=\alpha_i (x \otimes w_i) \alpha_i^*$ for a row vector $\alpha_i$ with $\|\alpha_i\|_{\ell^{k_i l_i}_2}<1$ and $w_i \in M_{l_i}(W)^+_1$.
\end{enumerate}
\end{thm}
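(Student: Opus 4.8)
The plan is to make part (1) the crux and then read off (2) and (3) from it using only the definitions of $|\cdot|_\Delta$ and $\|\cdot\|_\Delta$ together with the injectivity of $\Psi$ from Theorem \ref{duality2}. Part (1) amounts to translating membership in the cone $(V^*\otimes_\Delta W)_+$ into a factorization of the finite rank map $\Psi(z)$, and for this I would use two correspondences. The first is the identification of completely positive maps $R:V\to M_k$ with positive elements $v=[v_{ij}]\in M_k(V^*)^+=CB(V,M_k)\cap CP(V,M_k)$ via $R(x)=[v_{ij}(x)]$. The second, which I expect to be the point requiring the most care since $W$ is only a matrix regular operator space, is a Choi-type statement for the target: a map $S:M_k\to W$ is completely positive if and only if its Choi matrix $[S(e_{ij})]\in M_k(W)^+$. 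The nontrivial implication is that positivity of the Choi matrix forces complete positivity; writing a positive $A\in M_m(M_k)^+$ as $A=B^*B$ and expanding $S_m(A)$ one obtains $S_m(A)=\sum_r D_r\,[S(e_{ij})]\,D_r^*$ for scalar matrices $D_r$ built from $B$, so that $S_m(A)\in M_m(W)^+$ follows from the matrix-ordering axiom $\gamma^* M_m(W)^+\gamma\subset M_m(W)^+$ and closedness of the cones. (Boundedness of the intermediate maps causes no trouble: $S$ has finite dimensional domain, and $R$ may be taken completely bounded.)

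For (1), in the forward direction I would start from $z=\alpha(v\otimes w)\alpha^*$ with $v\in M_k(V^*)^+$, $w\in M_l(W)^+$ and $\alpha\in M_{1,kl}$, and set $R(x)=[v_{ij}(x)]:V\to M_k$ and $S(y)=\alpha(y\otimes w)\alpha^*:M_k\to W$. A direct index computation gives $\Psi(z)=S\circ R$; $R$ is completely positive because $v\in M_k(V^*)^+$, and $S$ is completely positive by the same $\sum_r C_r w C_r^*$ type computation as above. For the converse, given a factorization $\Psi(z)=S\circ R$ through completely positive maps, I would let $v=[v_{ij}]\in M_k(V^*)^+$ be the matrix of $R$ and $w_0=[S(e_{ij})]\in M_k(W)^+$ be the Choi matrix of $S$. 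Then $\Psi(z)(x)=S([v_{ij}(x)])=\sum_{ij}v_{ij}(x)\,w_{0,ij}$, so injectivity of $\Psi$ gives $z=\sum_{ij}v_{ij}\otimes w_{0,ij}=\alpha(v\otimes w_0)\alpha^*$, where $\alpha\in M_{1,k^2}$ is the row vector selecting the diagonal indices $(i,i)$; hence $z\in(V^*\otimes_\Delta W)_+$.

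Part (2) should be a homogeneity argument layered on (1). By definition $|z|_\Delta=\inf\{\|\alpha\|^2\|v\|\|w\|:z=\alpha(v\otimes w)\alpha^*\}$, so $|z|_\Delta<1$ precisely when some representation satisfies $\|\alpha\|^2\|v\|\|w\|<1$. Rescaling $v\mapsto v/\|v\|$ and $w\mapsto w/\|w\|$ and absorbing the scalars into $\alpha$ yields a representation with $\|v\|\le 1$ (so the associated $R$ is c.c.p.), $w\in M_l(W)_1^+$, and $\|\alpha\|<1$, while $S(x)=\alpha(x\otimes w)\alpha^*$ is exactly the map of (1). The converse is immediate: such data produces $z=\alpha(v\otimes w)\alpha^*$ by injectivity of $\Psi$, whence $|z|_\Delta\le\|\alpha\|^2\|v\|\|w\|<1$.

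Finally, for (3) I would unwind the defining formula $\|z\|_\Delta=\inf\{\max\{|u|_\Delta,|u'|_\Delta\}:\begin{pmatrix}u&z\\z^*&u'\end{pmatrix}\in M_2(V^*\otimes_\Delta W)_+\}$, so that $\|z\|_\Delta<1$ holds iff there are $u,u'\in(V^*\otimes_\Delta W)_+$ making the $2\times 2$ element $\Delta$-positive with $|u|_\Delta,|u'|_\Delta<1$. Applying the level-$2$ analogue of (1) to this element identifies its $\Delta$-positivity with the existence of a completely positive matrix factorization of $\begin{pmatrix}\Phi_1&\Psi(z)\\\Psi(z)^*&\Phi_2\end{pmatrix}$, where $\Phi_1=\Psi(u)$ and $\Phi_2=\Psi(u')$, and applying (2) to $u$ and $u'$ converts $|u|_\Delta,|u'|_\Delta<1$ into the stated factorizations $\Phi_i=S_i\circ R_i$ with $R_i$ c.c.p. and $S_i(x)=\alpha_i(x\otimes w_i)\alpha_i^*$, $\|\alpha_i\|<1$, $w_i\in M_{l_i}(W)_1^+$. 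The only extra bookkeeping is to confirm that the level-$2$ version of (1) goes through verbatim, the sole change being that now $\alpha\in M_{2,kl}$ produces a completely positive map into $M_2(W)$; I expect this to be routine once the scalar case is established.
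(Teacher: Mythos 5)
Your proof of (1) is essentially the paper's own argument: the forward direction builds $R$ from $v \in M_k(V^*)^+ = CP(V,M_k)$ and takes $S(x)=\alpha(x \otimes w)\alpha^*$, while the converse uses the Choi matrix $[S(e_{ij})] \in M_k(W)^+$ together with the diagonal-selecting row vector and injectivity of $\Psi$, exactly as in the paper. The paper leaves (2) and (3) to the reader, and your rescaling argument for (2) and your level-$2$ reduction for (3) (applying the $M_2$-version of (1) and then (2) to the diagonal corners $u$, $u'$) are the natural and correct completions.
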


\begin{proof}
(1) $\Rightarrow )$ $z$ can be written as $$z=\alpha (R \otimes w) \alpha^*, \qquad \alpha \in M_{1,kl}, R \in M_k(V^*)^+, w \in M_l(W).$$ Since $M_k(V^*)^+=CP(V,M_k)$, we can regard $R$ as a completely positive map from $V$ into $M_k$. We define $S : M_k \to W$ by $S(x)=\alpha (x \otimes w) \alpha^*$. Then $S$ is completely positive and $\Psi(z)=S \circ R$.

$\Leftarrow )$ Since $[e_{ij}] \in M_k(M_k)^+$ and $S : M_k \to W$ is completely positive, $[S(e_{ij})]$ belongs to $M_k(W)^+$. We check the identity $$S(x)=\begin{pmatrix} e_1^t, \cdots, e_n^t \end{pmatrix} ( x \otimes [S(e_{ij})] ) \begin{pmatrix} e_1 \\ \vdots \\ e_n \end{pmatrix}$$ only for $x = e_{kl}$. If we regard $R \in M_k(V^*)^+$, then it can be verified that $$z=\begin{pmatrix} e_1^t, \cdots, e_n^t \end{pmatrix} ( R \otimes [S(e_{ij})] ) \begin{pmatrix} e_1 \\ \vdots \\ e_n \end{pmatrix} \in (V^* \otimes_{\Delta} W)_+.$$

(2)(3) The proofs are left to the reader.
\end{proof}

\bigskip

{\bf Acknowledgments}

The author is grateful to the referee for careful reading and bringing his attention to Ref. \cite{LMM}.


\end{document}